\newcommand{\low}{loose odd wheel\xspace}
\newcommand{\subpath}[4]{  P_{#1, #2 - #3,#4} }
\newtheorem{definition}{Definition}
\newtheorem{theorem}[definition]{Theorem}
\newtheorem{corollary}[definition]{Corollary}
\newtheorem{observation}[definition]{Observation}
\newtheorem{lemma}[definition]{Lemma}
\newtheorem{conjecture}[definition]{Conjecture}
\newtheorem{claim}[definition]{Claim}
\newcommand{\ssp}{\textrm{\rm SSP}}
\newcommand{\tstab}{\textrm{\rm TSTAB}}
\newcommand{\charf}[1]{\raisebox{\depth}{\(\chi\)}_{#1}}
\newcommand{\N}{\ensuremath{\mathbb{N}}} 
\newcommand{\Z}{\ensuremath{\mathbb{Z}}} 
\newcommand{\R}{\ensuremath{\mathbb{R}}} 
\tikzstyle{hvertex}=[circle,inner sep=0.cm, minimum size=.8mm, fill=black, draw=black]
\tikzstyle{novertex}=[rectangle]
\tikzstyle{invvertex}=[circle,inner sep=0.cm,fill=white]
\tikzstyle{hedge}=[thick]
\tikzstyle{ledge}=[dashed]
\tikzstyle{dedge}=[ultra thick, densely dotted]
\tikzstyle{pedge}=[ ->, thick]
\colorlet{hellgrau}{black!30!white}
\title{On $t$-perfect triangulations of the projective plane}
\author{Elke Fuchs and Laura Gellert}
\date{}
\begin{document}
\maketitle

\begin{abstract}
We prove that a triangulation of the projective plane is (strongly) $t$-perfect if and only if it is perfect and contains no $K_4$.
\end{abstract}

\section{Introduction} \label{sec:introduction}

Perfect graphs received considerous attention in graph theory. The purely combinatorial definition --- that a graph is perfect if and only if for each of its induced subgraphs, the chromatic number and the clique number coincide --- can be replaced by a polyhedral one~(Chv\`{a}tal~\cite{Chvatal75}):
A graph $G$ is perfect if and only if its \emph{stable set polytope $\ssp(G)$}, ie the convex hull of stable sets of~$G$, is determined by non-negativity and clique inequalities. This happens if and only if the polyhedron defined by the system of non-negativity and clique inequalities is integral. In this case, the system is also totally dual integral (see~eg~\cite[Ch.65.4]{LexBible}).

Modification of these inequalities leads to generalisations of perfection.
A graph~$G$ is \emph{$t$-perfect} if $\ssp(G)$ is determined by non-negativity-, edge- and odd-cycle inequalities. 
A graph $G$ is \emph{strongly $t$-perfect} if the described system is totally dual integral. Clearly, strong $t$-perfection implies $t$-perfection. It is not known whether the converse is also true.


We characterise (strongly) $t$-perfect triangulations of the projective plane.

\begin{theorem}\label{thm:mainthm_4_equivalences}
For every triangulation $G$ of the projective plane the following assertions are equivalent:
\begin{enumerate}[(a)]
\item $G$ is $t$-perfect \label{item:t-perfect}
\item $G$ is strongly  $t$-perfect \label{item:strongly_t-perfect}
\item $G$ is perfect and contains no $K_4$ \label{item:perfect_without_K4}
\item $G$ contains no \low and no $\overline{C_7}$ as an induced subgraph. \label{item:no_low_C7bar}
\end{enumerate}
\end{theorem}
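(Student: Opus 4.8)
The plan is to establish the cyclic chain of implications (c) $\Rightarrow$ (b) $\Rightarrow$ (a) $\Rightarrow$ (d) $\Rightarrow$ (c); the link (b) $\Rightarrow$ (a) is free, since it is recorded in the introduction. A useful orientation is that the triangulation hypothesis is genuinely needed in only one of the remaining implications: (a) $\Rightarrow$ (d) holds for arbitrary graphs, and (c) $\Rightarrow$ (b) will follow from a general sufficient condition for strong $t$-perfection, whereas (d) $\Rightarrow$ (c) is false without the embedding (for instance $C_5$ satisfies (d) and is even $t$-perfect, yet is imperfect and so fails (c)). Consequently I expect (d) $\Rightarrow$ (c) to be the main obstacle.

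For (a) $\Rightarrow$ (d) I would use that $t$-perfection is inherited by induced subgraphs. It then suffices to show that neither a \low nor $\overline{C_7}$ is itself $t$-perfect, for then a $t$-perfect $G$ can contain neither as an induced subgraph. For $\overline{C_7}$ the all-$\tfrac13$ vector satisfies every non-negativity, edge and odd-cycle inequality, yet has weight $\tfrac73$, which exceeds the independence number $2$; hence it lies outside the stable set polytope and witnesses $t$-imperfection. For the \low one exhibits a comparable fractional point supported on hub and rim.

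For (c) $\Rightarrow$ (b) I would appeal to Gerards' min-max theorem, which guarantees strong $t$-perfection for every graph without a totally odd $K_4$-subdivision. The task is thus to check that a perfect, $K_4$-free graph admits no such subdivision. A non-degenerate totally odd $K_4$-subdivision has a constituent circuit of odd length at least $5$ that is induced within the subdivision, and such a circuit should yield an induced odd hole and so contradict perfection; the degenerate case is $K_4$ itself, barred by $K_4$-freeness. The only subtlety here is the possible presence of ambient chords on that circuit, which must be argued away.

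Finally, (d) $\Rightarrow$ (c) is the structural heart and rests on the Strong Perfect Graph Theorem together with the topology of the triangulation. The bridge I would use is that in any triangulation the \emph{link} of every vertex is a cycle, so each vertex together with its neighbourhood realises the hub and rim of a wheel; this lets one convert dense or cyclic substructures into loose odd wheels. To obtain $K_4$-freeness, a $K_4$ cannot have all four of its triangles as faces (they would bound a sphere inside the projective plane), so some triangle encloses further triangulated structure, and chasing the incident faces yields an induced \low, contradicting (d). By the Strong Perfect Graph Theorem, perfection then reduces to excluding odd holes and odd antiholes. An odd antihole $\overline{C_{2k+1}}$ has clique number $k$: for $k \ge 4$ it contains $K_4$ and is already gone, the case $k = 3$ is $\overline{C_7}$, excluded by (d), and $\overline{C_5} = C_5$ is an odd hole. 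For an odd hole $C$ I would analyse the triangulated region it bounds; combining Euler's formula with the parity of $C$ should produce a vertex dominating an odd stretch of $C$ whose incident faces induce a \low, the final contradiction. The delicate part throughout is exactly this last conversion — turning the projective-planar arrangement of triangles around an odd hole into a genuine induced loose odd wheel — and I expect the parity and face bookkeeping there, in particular distinguishing contractible from non-contractible holes, to be the crux of the whole proof.
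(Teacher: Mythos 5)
Your chain (c)$\Rightarrow$(b)$\Rightarrow$(a)$\Rightarrow$(d)$\Rightarrow$(c) is the same as the paper's, and (b)$\Rightarrow$(a), (a)$\Rightarrow$(d) are essentially sound (though for the \low you should not expect a uniform fractional point to work: for a rim $C_5=v_1\ldots v_5$ with hub $h$ adjacent to $v_1,v_2,v_3$, the all-$\tfrac13$ vector actually \emph{lies in} $\ssp$; one needs a non-uniform point, e.g.\ value $\tfrac23$ on $v_4$, or the paper's route, namely that a \low has an odd wheel as a $t$-minor and $t$-minors preserve $t$-perfection). The decisive gap is (c)$\Rightarrow$(b). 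The bridge lemma your plan requires --- that a perfect graph without $K_4$ contains no totally odd subdivision of $K_4$ --- is false, and the ``ambient chords'' you hoped to argue away are exactly what kills it. Counterexample: subdivide the edge $cd$ of a $K_4$ on $\{a,b,c,d\}$ into a path $c,p,q,d$ and add the two chords $aq$ and $bp$. The resulting six-vertex graph is $K_4$-free (no vertex has a triangle inside its neighbourhood), and it is perfect: $a$ and $b$ each have four neighbours, every five-vertex subset contains $a$ or $b$ and leaves it with at least three neighbours, so there is no induced $C_5$, and no longer hole or antihole fits on six vertices. Yet the subdivision with branch vertices $a,b,c,d$ is still a subgraph, and its four triangles have lengths $3,3,5,5$, all odd. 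So Gerards' theorem simply does not apply, and this implication needs a different argument. The paper's Lemma~\ref{lem:perfect_no_K4->t-perfect} gives it directly: when $\omega(G)\le 3$, the clique inequalities are among the inequalities~\eqref{eq_tstab} (triangles are induced odd cycles), perfection makes the clique system totally dual integral, and the remaining odd-cycle inequalities are valid for $\ssp(G)$, so appending them preserves total dual integrality.

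For the heart (d)$\Rightarrow$(c), your sketch is not yet an argument, and its one concrete idea points the wrong way: you propose to analyse ``the triangulated region bounded by'' an odd hole, but the hard case is a \emph{non-contractible} odd hole, which bounds no disk; the contractible case is precisely the one already dispatched by the Bruhn--Benchetrit result (Theorem~\ref{thm:cycles_triang}). For non-contractible holes the paper does no local Euler/parity bookkeeping around the hole. Instead it reduces to nice triangulations (Theorem~\ref{thm:clique-sum}) and to Eulerian ones (Observation~\ref{obs:Eulerian}), and then runs an induction over Suzuki--Watanabe's generation theorem: either the triangulation is irreducible --- finitely many sporadic graphs plus three infinite families, checked one by one in Lemma~\ref{lem:irreducible} --- or an even-contraction applies, and Lemmas~\ref{lem:even-contraction}, \ref{lem:even-split_low} and~\ref{lem:even-split_anti-hole} pull a \low or an induced $\overline{C_7}$ back from the contracted graph. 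None of this machinery, nor any substitute for it, appears in your proposal, so the implication carrying the paper's whole content remains unproved. (A minor simplification: your topological argument for $K_4$-freeness is unnecessary, since $K_4$ is itself a \low and is therefore excluded by (d) by definition.)
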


The definition of a \low is given in Section~\ref{sec:t-perfection}.  

One of the main open questions about $t$-perfection is, whether a $t$-perfect graph can always be coloured with few colours.
Standard polyhedral methods assert that the fractional chromatic number of a
$t$-perfect graph is at most $3$.

Laurent and Seymour as well as Benchetrit found examples of a $t$-perfect graph that is not $3$-colourable (see~\cite[p.~1207]{Schri02} and~\cite{Benchetrit2015}).

\begin{conjecture}[Shepherd, Seb\H o]
Every $t$-perfect graph is $4$-colourable.
\end{conjecture}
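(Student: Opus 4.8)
Since the final statement is the Shepherd--Seb\H o conjecture, which is explicitly described above as one of the main open questions of the theory, what follows can only be a research strategy rather than a complete argument; I sketch the line of attack I would pursue and where I expect it to break down. The plan is to argue by a vertex-minimal counterexample, exploiting the fact that the class of $t$-perfect graphs is closed under the natural $t$-minor operations (deletion of vertices and contraction of an edge whose endpoints have no common neighbour). Let $G$ be a minimal graph that is $t$-perfect but not $4$-colourable. First I would use closure under deletion to force $G$ to be vertex-critical for $4$-colouring: every proper induced subgraph is $4$-colourable, so $G$ has minimum degree at least $4$ and, by standard criticality arguments, is $2$-connected with no small separator across which two partial colourings could be glued. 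Next I would feed in the polyhedral input supplied above: since the fractional chromatic number of $G$ is at most $3$, the graph carries a family of stable sets with non-negative weights summing to $3$ that covers every vertex with total weight at least $1$. The heart of the argument would be a rounding step converting this fractional certificate into an integral colouring while losing at most one colour, so that three fractional colours become four integral ones.

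The supporting structural steps would be the following. First I would dispose of the $t$-perfect subclasses for which the conjecture is already known --- bipartite, series--parallel, almost bipartite and claw-free $t$-perfect graphs --- so that in a minimal counterexample $G$ contains an induced claw and is genuinely non-bipartite. Second, I would analyse how the odd holes of $G$ overlap, since their characteristic vectors are exactly the supports of the tight odd-cycle inequalities, and hence the combinatorial obstructions that $t$-perfection controls. Third, I would exploit the total dual integrality of the non-negativity, edge and odd-cycle system (which holds precisely when $G$ is strongly $t$-perfect) to extract an integral dual object --- an integral packing of edges and odd cycles --- that is forced by the fractional $3$-colouring, and then attempt to read off from it a partition of $V(G)$ into four stable sets. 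Throughout, the $t$-contraction operation would serve as the tool for collapsing pairs of non-adjacent, non-co-neighboured vertices so as to simplify an overlapping family of odd holes while remaining inside the class.

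I expect the decisive obstacle to be the absence of a \emph{structure theorem} for $t$-perfect graphs. Unlike perfection, which is characterised combinatorially by the Strong Perfect Graph Theorem and reduced in this paper (for projective-plane triangulations) to the $K_4$-free perfect case where $\chi=\omega\le 3$, $t$-perfection is defined purely by a polyhedral condition; the only available combinatorial handle is a list of forbidden $t$-minors that is necessary but is \emph{not} known to be exhaustive. Consequently the minimal-counterexample argument cannot enumerate the local configurations around a high-degree vertex or across a separator, and the rounding step has no combinatorial invariant on which to terminate: the gap between $\chi_f\le 3$ and $\chi\le 4$ is exactly the gap that the Laurent--Seymour and Benchetrit non-$3$-colourable examples exploit, and nothing in the current theory prevents it from growing. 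Genuine progress would, in my view, require either a decomposition theorem expressing an arbitrary $t$-perfect graph in terms of the known basic classes, or a direct polyhedral rounding producing an integral $4$-colouring from the fractional $3$-colouring --- neither of which is presently available, which is precisely why the statement remains a conjecture.
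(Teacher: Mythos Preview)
You correctly identified that this is an open conjecture, not a theorem, and the paper does not attempt to prove it in general; there is no ``paper's own proof'' to compare against. The paper merely states the conjecture and then verifies it for the restricted class under study: by Theorem~\ref{thm:mainthm_4_equivalences}, a $t$-perfect triangulation of the projective plane is perfect and $K_4$-free, hence $3$-colourable (Corollary~\ref{cor:4_colours->t-imp}). Your recognition that no complete argument is possible here, and your explicit acknowledgement that the sketch is a research strategy that breaks down for lack of a structure theorem, is exactly the right response to being asked to ``prove'' a conjecture.
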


The conjecture is known to hold in a number of graph classes, for instance in $P_6$-free graphs (Benchetrit~\cite{Benchetrit2016}), claw-free graphs (Bruhn and Stein~\cite{Bru_Stei12}), and in $P_5$-free graphs (Bruhn and Fuchs~\cite{Bru_Fu15}). In the last two classes, the graphs are even $3$-colourable. 

The verification of the conjecture for triangulations of the projective plane follows directly from Theorem~\ref{thm:mainthm_4_equivalences}: A $t$-perfect triangulation is perfect without $K_4$ and thus $3$-colourable.

\begin{corollary} \label{cor:4_colours->t-imp}
Every $t$-perfect triangulation of the projective plane is $3$-colourable.
\end{corollary}

In order to prove Theorem~\ref{thm:mainthm_4_equivalences}, we mainly study Eulerian triangulations, ie triangulations  where all vertices are of even degree.\footnote{Sometimes, an Eulerian triangulation is referred to as an \emph{even triangulation}.}
  Colourings of Eulerian triangulations were studied by Hutchinson et al.~\cite{HuRiSey02} and by Mohar~\cite{Mohar02}.  
 Suzuki and Watanabe~\cite{SuWa07} determined a family of Eulerian triangulations of the projective plane such that every Eulerian triangulation of the projective plane can be transformed into one of its members by application of two operations (see Section~\ref{sec:proof} for more details). 
  Barnette~\cite{Bar82} showed that one can obtain each triangulation of the projective plane from one of two minimal triangulations using two kinds of splitting operation.
 

A general treatment of (strong) $t$-perfection may be found 
in Schrijver~\cite[Vol.~B, Ch.~68]{LexBible}. 
The classes of $t$-perfect and strongly $t$-perfect graphs are closed under taking induced subgraphs and under simultaneous contractions of all  edges incident to  a vertex with stable neighbourhood. However, no  characterisation in terms of forbidden induced subgraphs is known. 
Bruhn and Benchetrit showed that plane triangulations are $t$-perfect if and only if they do not contain a certain subdivision of an odd wheel as an induced subgraph~\cite{Ben_Bru15}. 
We showed that quadrangulations of the projective plane are (strongly) $t$-perfect if and only if they are  bipartite~\cite{tpquad}. 
Boulala and Uhry~\cite{BouUhr79} established the $t$-perfection of series-parallel graphs.
It was shown by Gerards and Schrijver~\cite{Ge_Schri86} that each graph without an odd $K_4$ is $t$-perfect (an \emph{odd $K_4$} is a subdivision of a $K_4$ where each triangle has become an odd cycle).  Gerards~\cite{Gerards89} showed that such graphs are also strongly $t$-perfect. 
However, there exist odd $K_4$s which are not $t$-perfect 
--- the \emph{bad $K_4$s}.
Gerards and Shepherd~\cite{GS98} proved that graphs without bad $K_4$ can be recognised in polynomial time. Additionally, they showed that each graph without a bad $K_4$ as a subgraph is $t$-perfect. Schrijver~\cite{Schri02} extended this to strongly $t$-perfect graphs.




%


\section{ (Strong) $t$-perfection} \label{sec:t-perfection}

 All the graphs mentioned here are finite, simple. We follow the notation of Diestel~\cite{Diestel}.

Let $G=(V,E)$ be a graph.
The \emph{stable set polytope} $\ssp(G)\subseteq\mathbb R^{V}$ of $G$ 
is defined as the convex hull of the characteristic vectors of stable, ie independent, subsets of $G$.
The characteristic vector of a subset $S \subseteq V$ is the vector $\charf{S}~\in~\{0,1\}^{V}$ with $\charf{S}(v)=1$ if $v\in S$ and $0$ otherwise. 
We define a second polytope $\tstab(G)\subseteq\mathbb R^V$ for $G$, given by
\begin{eqnarray}
&&x\geq 0,\notag\\
&&x_u+x_v\leq 1\text{ for every edge }uv\in E, \label{eq_tstab}\\ 
&&\sum_{v\in V(C)}x_v\leq \left\lfloor ^{|C|}\slash_2\right\rfloor\text{ for every induced odd cycle }C 
\text{ in }G.\notag
\end{eqnarray}
These inequalities are respectively known as non-negativity, edge and
odd-cycle inequalities. Clearly, $\ssp(G)$ is contained in $ \tstab(G)$.
The graph $G$ is called \emph{$t$-perfect} if $\ssp(G)$ and
$\tstab(G)$ coincide. Equi\-va\-lently, $G$ is $t$-perfect if and only if
$\tstab(G)$ is an integral polytope, ie if all its vertices
are integral vectors.


A graph $G=(V,E)$ is \emph{strongly $t$-perfect} if the system \eqref{eq_tstab}  is \emph{totally dual integral}. That is, if for each weight vector $w \in \Z^{V}$, the linear program of maximizing $w^Tx$ over \eqref{eq_tstab} has an integer optimum dual solution. 
This property implies that $\tstab(G)$ is integral. Therefore, 
\begin{equation} \label{eq:strong_tp->tp}
\text{strong $t$-perfection implies $t$-perfection.}
\end{equation}
It is an open question whether every $t$-perfect graph is strongly $t$-perfect. 
The question is briefly discussed in Schrijver~\cite[Vol. B, Ch. 68]{LexBible}.

It is easy to verify that vertex deletion preserves $t$-perfection.
Another operation that keeps $t$-perfection was found by 
Gerards and Shepherd~\cite{GS98}: whenever there is a vertex $v$, so that its
neighbourhood is stable, we may contract all edges incident with $v$
simultaneously. We will call this operation a  \emph{$t$-contraction at~$v$}.
Any graph that is obtained from $G$ by a sequence of vertex deletions 
and $t$-contractions is a \emph{$t$-minor of $G$}.
Let us point out that any $t$-minor of a $t$-perfect graph is again $t$-perfect. 
The same holds for strong $t$-perfection (see eg~\cite[Vol.~B,~Ch.~68.4]{LexBible}).

A graph $G$ is \emph{minimally $t$-imperfect} if $G$ is $t$-imperfect, but all its proper $t$-minors are not $t$-perfect. A \emph{wheel} $W_p$ is a graph consisting of a $p$-cycle $w_1, \ldots, w_p, w_1$ (with $p \geq 3$) and a center vertex $v$ adjacent to $w_i$ for $i=1, \ldots, p$; see Figure~\ref{fig:wheels}. 
The wheel $W_p$ is an \emph{odd wheel} if $p$ is odd. 
\begin{figure}[htb]
\def\krad{0.4cm}
\newcommand{\wheel}[1]{
\def\angle{360/#1}
\pgfmathtruncatemacro{\nminusone}{#1-1}
\node[hvertex] (c) at (0,0){};
\foreach \i in {0,...,\nminusone}{
  \begin{scope}[on background layer]
    \draw[hedge] (225-\angle/2+\i*\angle:\krad) -- (225+\angle/2+\i*\angle:\krad);
  \end{scope}
  \node[hvertex] (v\i) at (225-\angle/2+\i*\angle:\krad){};
  \draw[hedge] (c) -- (v\i);
}
}

\begin{center}
\begin{tikzpicture}[scale=1.3]

\wheel{3}

\begin{scope}[shift={(2,0)}]
\wheel{7}
\end{scope}

\begin{scope}[shift={(4,0)}]
\node[hvertex] (c) at (0,0){};
\foreach \i in {0,...,8}{
  \node[hvertex] (v\i) at (270-40+\i*40:\krad){};
  \draw[hedge] (270-20+\i*40:\krad) -- (270+20+\i*40:\krad);
}
\foreach \i in {0,1,4,6}{
  \begin{scope}[on background layer]
    \draw[hedge] (c) -- (v\i);
  \end{scope}
}
\end{scope}

\begin{scope}[shift={(6,0)}]
\node[hvertex] (c) at (0,0){};
\foreach \i in {0,...,14}{
  \node[hvertex] (v\i) at (270-24+\i*24:\krad){};
  \draw[hedge] (270-12+\i*24:\krad) -- (270+12+\i*24:\krad);
}
\foreach \i in {0,1,2,5,6,11}{
  \begin{scope}[on background layer]
    \draw[hedge] (c) -- (v\i);
  \end{scope}
}
\end{scope}

\end{tikzpicture}
\end{center}
\caption{The odd wheels $W_3=K_4$ and $W_7$ as well as two loose odd wheels}
\label{fig:wheels}

\end{figure}
It is well-known (see eg~\cite[Vol. B, Ch. 68.4]{LexBible}) that 
\begin{equation} \label{eq:odd_wheels_t-imp}
\text{odd wheels are minimally $t$-imperfect.}
\end{equation}
Indeed, the vector $\left(^{1}\slash_3, \ldots, ^{1}\slash_3\right)$ is contained in $\tstab(W_{2k+1})$ but not in $\ssp(W_{2k+1})$ for $k \geq 1$.

%

For a cycle $C$ and three vertices $v_1,v_2,v_3 \in V(C)$ we denote by $\subpath{C}{v_1}{v_2}{v_3} $ the path connecting $v_1$ and $v_2$ along $C$ that does not contain $v_3$. Figure~\ref{fig-def_seg} illustrates this. Note that wiggly lines represent paths. 
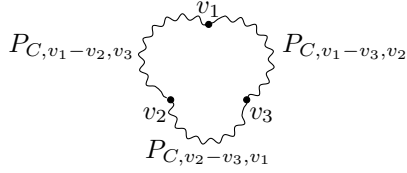
\begin{figure}

\begin{center}
\begin{tikzpicture}[scale=1]



%
\node[novertex] (l) at (-1.8,0.2){$\subpath{C}{v_1}{v_2}{v_3} $};
\node[novertex] (r) at (1.8,0.2){$\subpath{C}{v_1}{v_3}{v_2} $};
\node[novertex] (o) at (0,-1.2){$\subpath{C}{v_2}{v_3}{v_1} $};

\node[hvertex] (v1) at (0,.5){};
\node[novertex] (lv1) at (0,.7){$v_1$};
\node[hvertex] (v3) at (.5,-.5){};
\node[novertex] (lv3) at (.7,-.7){$v_3$};
\node[hvertex] (v2) at (-.5,-.5){};
\node[novertex] (lv2) at (-.7,-.7){$v_2$};

\draw [decorate, decoration={snake,amplitude=.4mm,segment length=2mm,post length=0mm}] (-0.5,-0.5) arc (-180:0:.5);
\draw [decorate, decoration={snake,amplitude=.4mm,segment length=2mm,post length=0mm}] (0.5,-0.5) arc (-60:116:.56);

\draw [decorate, decoration={snake,amplitude=.4mm,segment length=2mm,post length=0mm}] (0,0.5) arc (60:241:.56);


\end{tikzpicture}
\end{center}
\caption{The paths $\subpath{C}{v_1}{v_2}{v_3} $, $\subpath{C}{v_1}{v_3}{v_2}$ and $\subpath{C}{v_2}{v_3}{v_1}$ on a cycle $C$}

\label{fig-def_seg}

\end{figure}

Let $C$ be an odd cycle and let $v \notin V(C)$ be a vertex.
Three vertices $v_1,v_2,v_3$ are called three \emph{odd neighbours} of $v$ on $C$, if they are neighbours of $v$ and the paths $\subpath{C}{v_1}{v_2}{v_3}$, $\subpath{C}{v_1}{v_3}{v_2}$ and $\subpath{C}{v_2}{v_3}{v_1}$ on $C$ are odd.

A \emph{loose odd wheel}  consists of an odd cycle $C$ and a vertex $v \notin V(C)$ that has three odd neighbours on $C$.
Evidently, every odd wheel is a \low and every graph that contains an odd wheel as a subgraph also contains an odd wheel as an induced subgraph. 
Further, every loose odd wheel has an odd wheel as a $t$-minor. 
As vertex-deletion and $t$-contraction preserve $t$-perfection it follows directly from~\eqref{eq:odd_wheels_t-imp} that
\begin{equation}\label{eq:low->t-imp}
\text{a $t$-perfect graph contains no loose odd wheel.}
\end{equation}

The \emph{chromatic number} of a graph is the smallest number of colours needed to colour its vertices. A graph is \emph{perfect}, if for every induced subgraph the chromatic number and the size of a largest clique coincide. 

An \emph{odd hole} $C_k$ is an induced odd cycle on $k \geq 4$ vertices. An \emph{odd anti-hole} $\overline{C_k}$ is the complement of an odd hole. The anti-hole 
\begin{equation} \label{eq:7-antihole_t-imp}
\text{ $\overline{C_7}$ is minimally $t$-imperfect.}
\end{equation}

It is easy to check that odd holes and odd anti-holes are not perfect. The well-known Strong Perfect Graph Theorem 
states that the converse is also true.

\begin{theorem}[Chudnovsky, Robertson, Seymour, and Thomas~\cite{SPGT} ] \label{thm:SPGT} 
A simple graph $G$ is perfect if and only if $G$ contains no odd hole or odd anti-hole.
\end{theorem}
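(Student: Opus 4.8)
The plan is to treat the two directions separately, with essentially all of the work in the reverse implication. For the forward (easy) direction I would argue the contrapositive: odd holes and odd anti-holes are themselves imperfect, and perfection is inherited by induced subgraphs, so any graph containing one of them as an induced subgraph is imperfect. The imperfection of $C_{2k+1}$ for $k \geq 2$ is immediate, since $\omega(C_{2k+1}) = 2$ while $\chi(C_{2k+1}) = 3$; invoking Lov\'asz's Perfect Graph Theorem (perfection is preserved under complementation) then disposes of $\overline{C_{2k+1}}$ at once, or one checks directly that $\omega(\overline{C_{2k+1}}) = k < k+1 = \chi(\overline{C_{2k+1}})$. The excerpt already records that this direction is ``easy to check''.

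The substance is the reverse direction: every \emph{Berge graph} (a graph with neither an odd hole nor an odd anti-hole as an induced subgraph) is perfect. I would prove this by contradiction, choosing a minimal imperfect Berge graph $G$, so that $G$ is imperfect but every proper induced subgraph of $G$ is perfect. The first step is to assemble the rigid structure that minimal imperfect graphs are known to possess: by Lov\'asz such a graph is \emph{partitionable} (it has exactly $\alpha(G)\omega(G)+1$ vertices and a very constrained system of clique and stable-set covers), and by the cutset lemmas of Chv\'atal and of Cornu\'ejols--Cunningham it can contain no star cutset, no proper homogeneous set, and no $2$-join. The decisive additional ingredient is that a minimal imperfect Berge graph admits no \emph{balanced skew partition}; I would isolate this as a separate lemma, because it is exactly the obstruction that makes the decomposition below usable.

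The heart of the argument is a \emph{decomposition theorem}: every Berge graph is either \emph{basic} --- bipartite, the complement of a bipartite graph, the line graph of a bipartite graph, the complement of such a line graph, or a double split graph --- or else admits a $2$-join, a $2$-join in the complement, or a balanced skew partition. Granting this, the contradiction is quick. Each basic class is perfect (bipartite graphs trivially, line graphs of bipartite graphs via K\"onig's theorem, double split graphs by direct verification, and the two complementary classes by the Perfect Graph Theorem), so the minimal imperfect graph $G$ cannot be basic; but the structural constraints from the previous paragraph show $G$ admits none of the three decompositions either. Hence no minimal imperfect Berge graph exists, and every Berge graph is perfect. The main obstacle --- and in truth essentially the entire content of the theorem --- is proving this decomposition theorem. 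I would expect to derive it through a long case analysis governed by the substructures a Berge graph can contain: first handling or exploiting specific configurations (notably ``prisms'' and other small attachments of holes), and then using finer tools such as the Roussel--Rubio lemma on how a path attaches to the rest of the graph, to force a graph avoiding all three decompositions into one of the five basic shapes. This step is the one whose difficulty dwarfs everything else, and realistically it is the portion that cannot be honestly compressed into a short sketch.
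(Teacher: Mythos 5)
The paper does not actually prove this statement: it is the Strong Perfect Graph Theorem, which the authors quote with a citation to Chudnovsky, Robertson, Seymour, and Thomas and use as a black box (the only argument supplied in the text is the easy remark that odd holes and odd anti-holes are imperfect). So there is no proof in the paper to compare yours against; the relevant question is whether your sketch stands on its own.

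Your outline faithfully reproduces the architecture of the published proof: the forward direction via imperfection of $C_{2k+1}$ and $\overline{C_{2k+1}}$ plus heredity of perfection under induced subgraphs; the converse via a minimal imperfect Berge graph, the classical constraints on such graphs (Lov\'asz's partitionability, Chv\'atal's star-cutset lemma, the Cornu\'ejols--Cunningham $2$-join lemma), the key new lemma that a minimal imperfect Berge graph admits no balanced skew partition, and finally the decomposition theorem sending every Berge graph either into one of the five basic perfect classes or into one of the three decompositions. That is the correct and, at present, essentially the only known route. But there is a genuine gap, which you yourself flag: the decomposition theorem is only asserted, and it is not a detail that can be deferred --- it \emph{is} the theorem, occupying on the order of $150$ pages in the Annals of Mathematics paper, built on the Roussel--Rubio lemma and a long case analysis around prisms and related configurations. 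A proposal that states it without proof is a road map, not a proof. In the context of the present paper this is also why the authors cite the result rather than prove it: the correct way to ``prove'' Theorem~\ref{thm:SPGT} here is to invoke the literature, and any self-contained argument shorter than the original is not currently available.
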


The incidence vector $x \in \R^{V(G)}$ of a stable set in a graph $G$ satisfies
\begin{eqnarray} 
&&x\geq 0,\notag\\
&&\sum_{v\in V(C)}x_v\leq 1\text{ for every clique }C \text{ in }G \label{eq:clique-inequ}.
\end{eqnarray}

Results of Lov{\'a}sz~\cite{Lovasz72}, Fulkerson~\cite{Fulkerson72}, and Chv\'atal~\cite{Chvatal75} showed that $G$ is perfect if and only if  \eqref{eq:clique-inequ} suffices to describe the stable set polytope $\ssp(G)$. This is the case if and only if the system~\eqref{eq:clique-inequ} is totally dual integral (see e.g.~\cite[[Vol.~B,~Ch.~65.4]{LexBible}). Thus, 

\begin{equation} \label{eq:perfect<->TDI}
\text{ $G$ is perfect if and only if  \eqref{eq:clique-inequ} is totally dual integral.}
\end{equation}

With this equivalence, it is easy to see one of the implications stated in Theorem~\ref{thm:mainthm_4_equivalences}.

\begin{lemma} \label{lem:perfect_no_K4->t-perfect}
Every perfect graph without $K_4$ is (strongly) $t$-perfect.
\end{lemma}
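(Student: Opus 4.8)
The plan is to reduce everything to the clique system \eqref{eq:clique-inequ}, exploiting that a $K_4$-free graph has no clique on more than three vertices. Since $G$ contains no $K_4$, every clique of $G$ has one, two or three vertices. The clique inequality of a two-element clique $\{u,v\}$ is exactly the edge inequality $x_u + x_v \le 1$, and the clique inequality of a triangle $T$ reads $\sum_{v \in T} x_v \le 1 = \lfloor 3/2 \rfloor$, which is precisely the odd-cycle inequality of the induced odd cycle of length three. Hence every clique inequality on at least two vertices already occurs among the edge- and odd-cycle inequalities defining $\tstab(G)$. As a one-element clique inequality $x_v \le 1$ is implied by $x_u + x_v \le 1$ and $x_u \ge 0$ for any neighbour $u$ of $v$ (and isolated vertices may be discarded), every point of $\tstab(G)$ satisfies \eqref{eq:clique-inequ}. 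Because $G$ is perfect, \eqref{eq:clique-inequ} describes $\ssp(G)$ by \eqref{eq:perfect<->TDI}, so $\tstab(G) \subseteq \ssp(G)$; together with the trivial inclusion $\ssp(G) \subseteq \tstab(G)$ this yields $\ssp(G) = \tstab(G)$ and thus $t$-perfection.

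For strong $t$-perfection I would show that the system \eqref{eq_tstab} is totally dual integral by lifting an integral dual solution of the clique system. Fix an integral weight vector $w$. Since $G$ is perfect, \eqref{eq:clique-inequ} is totally dual integral by \eqref{eq:perfect<->TDI}, so the clique program admits an integral optimal dual, that is, non-negative integers $y_K$ on the cliques $K$ of $G$ with $\sum_{K \ni v} y_K \ge w_v$ for every $v$ and $\sum_K y_K$ minimal.

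Next I would normalise this dual so that it uses no one-element clique. Whenever $y_{\{v\}} > 0$, I pick a neighbour $u$ of $v$ and move one unit of weight from $\{v\}$ to the edge $\{u,v\}$. This keeps all dual variables non-negative and integral, does not decrease the coverage at any vertex (the coverage of $v$ is unchanged and that of $u$ only grows) and leaves the objective $\sum_K y_K$ unchanged, since both cliques have right-hand side one. Iterating (the total weight placed on one-element cliques strictly decreases), I obtain an integral optimal clique dual supported on edges and triangles only. I then define a dual solution $z$ for \eqref{eq_tstab} by setting $z_e = y_e$ on each edge $e$, $z_C = y_C$ on each triangle $C$ read as the induced odd cycle of length three, and $z_C = 0$ on every longer induced odd cycle. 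Then $z$ is non-negative and integral, satisfies $\sum_{e \ni v} z_e + \sum_{C \ni v} z_C \ge w_v$ for all $v$, and its objective value $\sum_e z_e + \sum_C \lfloor |C|/2 \rfloor z_C$ equals $\sum_K y_K$ because the length-three coefficient $\lfloor 3/2 \rfloor$ is one. As $\ssp(G) = \tstab(G)$, the two maximisation programs share the same optimal value; by weak duality $z$ therefore attains the optimum of the dual of \eqref{eq_tstab}, exhibiting an integral optimal dual solution. This proves that \eqref{eq_tstab} is totally dual integral, that is, $G$ is strongly $t$-perfect.

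The main obstacle, and the only genuinely non-formal point, is that \eqref{eq_tstab} carries additional odd-cycle inequalities (for induced odd cycles of length at least five) that correspond to no clique; adding inequalities to a totally dual integral system can destroy total dual integrality even when the polytope is unchanged. The crux is therefore the normalisation step, which guarantees that some optimal integral clique dual is supported entirely on edges and triangles, so that it transfers verbatim to \eqref{eq_tstab} without ever invoking the long odd-cycle inequalities and without the one-element clique inequalities $x_v \le 1$ that \eqref{eq_tstab} lacks.
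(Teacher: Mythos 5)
Your proof is correct, and at its core it is the same argument as the paper's: in a $K_4$-free graph the clique inequalities on two and three vertices are exactly the edge and triangle (i.e.\ length-three odd-cycle) inequalities, so perfection together with \eqref{eq:perfect<->TDI} carries the clique description of $\ssp(G)$ and its total dual integrality over to the system \eqref{eq_tstab}. The paper compresses all of this into the single assertion that \eqref{eq_tstab} is ``equivalent'' to \eqref{eq:clique-inequ}; your normalisation-and-lifting of the dual is precisely the verification that this assertion requires, since total dual integrality is a property of a system of inequalities, not of the polyhedron it defines.

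One conceptual remark in your closing paragraph is backwards, although it does not affect the validity of the proof. Adding a valid (redundant) inequality to a totally dual integral system never destroys total dual integrality: an integral optimal dual solution of the smaller system extends by zeros to an integral optimal dual solution of the larger one --- which is exactly your lifting step, carried out for the odd-cycle inequalities of length at least five. What \emph{can} destroy total dual integrality is deleting inequalities, even redundant ones, and that is where the genuine content of your argument lies: \eqref{eq_tstab} lacks the singleton clique inequalities $x_v \le 1$ present in \eqref{eq:clique-inequ}, so an integral optimal clique dual may a priori place weight on singleton cliques, which have no counterpart among the dual variables of \eqref{eq_tstab}. Your normalisation step --- pushing singleton weight onto incident edges --- is what disposes of this, so the ``crux'' you identify is real, but it is the removal of the singleton inequalities, not the addition of the long odd-cycle inequalities, that makes it necessary.
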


\begin{proof}
Let $G$ be perfect with no clique of size at least $4$. 
Then, \eqref{eq_tstab}  is equivalent to \eqref{eq:clique-inequ}. 
The system is totally dual integral as $G$ is perfect (see~\eqref{eq:perfect<->TDI}). This shows that $G$ is (strongly) $t$-perfect. 
\end{proof}

\section{Triangulations} \label{sec:triangulations}

A \emph{triangulation} $G$ of the projective plane is a finite simple graph embedded on the surface such that every face of $G$ is bounded by a $3$-cycle. 
A cycle $C$ in the projective plane is \emph{contractible} if $C$ separates the projective plane into two sets $S_C$ and $\overline{S_C}$ where $S_C$ is homeomorphic to an open disk in $\R^2$. We call $S_C$ the \emph{interior} of $C$. 
A triangulation is \emph{nice} if no vertex is contained in the interior of  a contractible $3$-cycle.

As a contractible cycle separates $G$, the following theorem of Chvátal implies that 
\begin{equation} \label{eq:nice_suffices}
\text{ it suffices to analyse $t$-(im)perfection of nice triangulations.}
\end{equation}


\begin{theorem} [Chvátal~\cite{Chvatal75}] \label{thm:clique-sum}
Let $G$ be a graph with a clique separator $X$, and let $C_1 ,\ldots , C_k$ be the components of $G − X$. Then, $G$ is  $t$-perfect respectively perfect if and only if $G [ C_1 ∪ X ],\ldots, G [ C_k ∪ X ]$ are $t$-perfect respectively perfect.
\end{theorem}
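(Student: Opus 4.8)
The plan is to establish both equivalences---for $t$-perfection and for perfection---in parallel, since the underlying geometry is identical. Write $P(H)$ for $\tstab(H)$ in the $t$-perfect case and for the polytope defined by the non-negativity and clique inequalities~\eqref{eq:clique-inequ} in the perfect case; in either case $H$ is $t$-perfect (resp.\ perfect) precisely when $P(H)=\ssp(H)$, that is, when $P(H)$ is integral. Put $G_i=G[C_i\cup X]$, and note that the $G_i$ are induced subgraphs of $G$, that $V(G_i)\cap V(G_j)=X$ for $i\neq j$, and that $G$ has no edge between distinct components $C_i,C_j$. The forward implication is immediate: each $G_i$ is an induced subgraph of $G$, and both $t$-perfection and perfection pass to induced subgraphs, so if $G$ has the property then so does every $G_i$.

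For the converse I would first show that the inequalities defining $P(G)$ decompose over the pieces. Every edge and every clique of $G$ lies in a single $G_i$, since no edge joins two components. The crucial structural point is that every induced odd cycle $C$ of $G$ also lies in a single $G_i$: as $X$ is a clique, $C$ meets $X$ in at most two vertices, and if it meets $X$ in two then these are consecutive on $C$ (otherwise $X$ would supply a chord); deleting the edge between them leaves a path whose interior avoids $X$ and therefore stays within one component. Hence $x\in P(G)$ if and only if $x|_{V(G_i)}\in P(G_i)$ for every $i$. Assuming each $G_i$ is $t$-perfect (resp.\ perfect) we get $x|_{V(G_i)}\in\ssp(G_i)$, so we may fix a probability distribution $\mu_i$ on the stable sets of $G_i$ with marginals $x|_{V(G_i)}$.

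The heart of the argument is to glue the $\mu_i$ into a single distribution on stable sets of $G$ with marginals $x$; this is exactly where the clique hypothesis on $X$ is used. A stable set meets the clique $X$ in at most one vertex, so each $\mu_i$ induces the same distribution $\nu$ on the trace $S\cap X\in\{\emptyset\}\cup\{\,\{a\}:a\in X\,\}$, namely $\nu(\{a\})=x_a$ and $\nu(\emptyset)=1-\sum_{a\in X}x_a$, depending only on $x|_X$. (This is a genuine distribution because $\sum_{a\in X}x_a\le 1$: in the perfect case this is a clique inequality, and in the $t$-perfect case it is the edge or triangle odd-cycle inequality---here the hypothesis that some $G_i$ is $t$-perfect already forces $|X|\le 3$, as $K_4$ is $t$-imperfect.) I then sample a stable set of $G$ by drawing a trace $t\sim\nu$ and, independently over $i$, drawing $S_i$ from $\mu_i$ conditioned on trace $t$ and retaining $S_i\cap C_i$. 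Because the pieces overlap only in $X$ and carry no edges between them, the set $t\cup\bigcup_i(S_i\cap C_i)$ is stable in $G$, and a short computation shows its marginals equal $x$. Thus $x\in\ssp(G)$, giving $P(G)=\ssp(G)$ and the desired conclusion.

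I expect the main difficulty to be twofold: first the structural claim that neither an induced odd cycle nor a clique can straddle two components, and second the bookkeeping of the gluing step---checking that conditioning on the common trace $t$ recombines the $\mu_i$ into an honest distribution on stable sets of $G$ with the correct marginals. Both hinge on $X$ being a clique, which simultaneously forbids cross-chords and collapses the interface between pieces to a single shared vertex of any stable set.
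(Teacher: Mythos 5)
The paper offers no proof of this statement at all: it is imported verbatim as a theorem of Chv\'atal with a citation and used as a black box, so there is nothing in the paper to compare your argument against, and it must stand on its own. It does: your proof is correct. The forward direction via heredity of ($t$-)perfection under induced subgraphs is standard. Your converse rests on two facts, both of which you justify essentially correctly: (i) every edge, every clique and every induced odd cycle of $G$ lies inside a single piece $G_i=G[C_i\cup X]$ (for an induced cycle meeting $X$ in two vertices, those vertices must be consecutive on the cycle, else the clique $X$ would supply a chord), so membership in $\tstab(G)$, respectively in the clique-inequality polytope, decomposes coordinatewise over the pieces; and (ii) if each restriction $x|_{V(G_i)}$ is a convex combination $\mu_i$ of stable sets of $G_i$, then, because a stable set meets the clique $X$ in at most one vertex, every $\mu_i$ induces the \emph{same} forced trace distribution $\nu$ on $X$ (namely $\nu(\{a\})=x_a$, $\nu(\emptyset)=1-\sum_{a\in X}x_a$), so the $\mu_i$ can be resampled conditionally on a common trace and glued into a distribution on stable sets of $G$ with marginal vector $x$. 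The inequality $\sum_{a\in X}x_a\le 1$ needed for $\nu$ to be a distribution is indeed available in both settings, and your observation that $t$-perfection of a piece forces $|X|\le 3$ (since $K_4$ is $t$-imperfect) correctly covers the $t$-perfect case via an edge or triangle inequality. Two cosmetic points only: an induced odd cycle can meet $X$ in \emph{three} vertices when it is a triangle contained in $X$, but then it lies in every $G_i$, so the decomposition claim survives; and the conditional resampling should be restricted to traces $t$ with $\nu(t)>0$, which is automatic. Neither affects correctness; this is the standard polyhedral gluing argument for clique separators, and it is a perfectly good substitute for the uncited original proof.
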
 

Ringel~\cite{Ri78} pointed out that the neighbourhood of any vertex in a triangulation contains a Hamilton cycle:

\begin{theorem}[Ringel~\cite{Ri78}]\label{thm:locally_Hamiltonian} 
Let $G$ be a triangulation of a closed surface and let $v$ be a vertex with neighbourhood $ \lbrace v_0=v_d,v_1, \ldots, v_{d-1} \rbrace $ where $v,v_i,v_{i+1},v$ is a contractible triangle for $i=0, \ldots, d-1$. Then, the cycle $v_0,v_1, \ldots, v_{d-1},v_0$ is a contractible Hamilton cycle in $N_G(v)$.
\end{theorem}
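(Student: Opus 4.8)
The statement splits naturally into a combinatorial part (the claimed closed walk is a Hamilton cycle of $N_G(v)$) and a topological part (this cycle is contractible). The plan is to dispatch the combinatorial part essentially from the definitions, and to reduce the topological part to the fact that a neighbourhood of $v$ on a closed surface is a disk.

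First I would observe that the combinatorial claim is almost immediate. Since $G$ is simple, the neighbours $v_0, \ldots, v_{d-1}$ are $d$ distinct vertices. For each $i$ the triangle $v, v_i, v_{i+1}, v$ in particular supplies the edge $v_iv_{i+1}$, and since both of its endpoints lie in $N_G(v)$, this edge belongs to the induced subgraph $N_G(v)$. Hence $v_0, v_1, \ldots, v_{d-1}, v_0$ is a closed walk through distinct vertices along existing edges, i.e. a cycle; as it visits every vertex of $N_G(v)$ exactly once, it is a Hamilton cycle of $N_G(v)$.

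The real content is contractibility, and here I would use the local structure of the triangulation at $v$. Because $G$ triangulates a closed surface, a small neighbourhood of $v$ is homeomorphic to an open disk, and each edge lies on exactly two faces. The contractible triangles $F_i := v\,v_i\,v_{i+1}$ are then precisely the faces incident with $v$: reading the neighbours in their rotation order around $v$, the corner between the consecutive edges $vv_i$ and $vv_{i+1}$ is spanned by $F_i$, and consecutive faces $F_{i-1}$ and $F_i$ meet exactly along the edge $vv_i$. I would then argue that the union $D := \bigcup_{i} \overline{F_i}$ of the closed triangular faces is a closed disk: each $\overline{F_i}$ is a disk (the triangle is contractible), the $\overline{F_i}$ are glued in a cyclic fan along the edges $vv_i$, and since these exhaust the faces at $v$ they fill out a disk neighbourhood of $v$. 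The remaining edges $v_iv_{i+1}$ trace out the boundary, so $\partial D = C$ with $C = v_0 v_1 \cdots v_{d-1} v_0$. Consequently $C$ separates the surface into the open disk $\operatorname{int}(D) \ni v$ and its complement, which is exactly the definition of $C$ being contractible, with interior $S_C$ equal to the star of $v$.

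The step I expect to be the main obstacle is making the assertion that $D$ is a closed disk with $\partial D = C$ fully rigorous: one must rule out that the faces around $v$ wind around more than once, or fail to close up into a single fan, or that the link revisits a neighbour. All of this is controlled by the $2$-manifold condition --- each edge meets exactly two faces and $v$ has a disk neighbourhood --- which forces the faces at $v$ to form exactly one cyclic fan of $d = \deg(v)$ triangles. Once this local picture is pinned down, both the Hamiltonicity and the contractibility of $C$ read off directly.
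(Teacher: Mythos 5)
The paper never proves this theorem: it is quoted from Ringel~\cite{Ri78} and used as a black box (it is what licenses the notation $HC(v)$ later on), so there is no internal proof to compare yours against. Judged on its own terms, your argument is the standard one and both halves are sound. The combinatorial half (distinct neighbours by simplicity, the edges $v_iv_{i+1}$ supplied by the hypothesised triangles) is immediate, and the topological half --- the closed star of $v$ is a closed disk glued from a cyclic fan of $d$ closed triangular faces, its boundary is $C$, hence $C$ separates off an open disk containing $v$ --- is exactly the right mechanism; you also correctly identify the 2-manifold condition (every edge on two faces, disk neighbourhood of $v$) as what forces the single fan and prevents the link from revisiting a vertex.

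The one step you should not pass over is the sentence asserting that the contractible triangles $F_i=v\,v_i\,v_{i+1}$ ``are precisely the faces incident with $v$.'' As literally stated, the hypothesis only makes the $T_i$ contractible $3$-cycles, and a contractible $3$-cycle through $v$ need not be facial: it may bound a disk whose interior contains vertices, including other neighbours of $v$ (this is exactly what the paper's notion of a \emph{nice} triangulation rules out, and the theorem is not restricted to nice triangulations). In that situation the given cyclic order need not be the rotation order and $C$ need not bound the star of $v$, so your identification is an interpretation of the hypothesis rather than a deduction from it. It is certainly the \emph{intended} interpretation --- the facial triangles at $v$, read off in rotation order, are automatically contractible, and this is how the paper applies the theorem --- so under that reading your proof is correct. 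If one insists on the literal hypothesis, note that on the projective plane (the only surface the paper needs) the conclusion can still be recovered without identifying the $T_i$ as faces: over $\Z_2$ one has $C=\sum_i T_i$ as $1$-chains, each $T_i$ bounds a disk, so $C$ is null-homologous, hence separating, and every separating simple closed curve in the projective plane bounds a disk. On general closed surfaces (Klein bottle, higher genus) null-homologous does not imply contractible, so there the face/rotation-order reading is genuinely what the statement is about; make that reading explicit and your proof is complete.
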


We denote the contractible Hamilton cycle of $v$ described in Theorem~\ref{thm:locally_Hamiltonian} by~\emph{$HC(v)$} and note:

\begin{observation}\label{obs:induced_cycles}  
Let $G$ be a nice triangulation with a non-contractible cycle $C$. Let $u,v,w$ be three consecutive vertices on $C$. 
Then the two paths between $u$ and $w$ along the Hamilton cycle $HC(v)$ are induced.  
\end{observation}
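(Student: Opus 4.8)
The plan is to argue by contradiction: assuming one of the two $u$--$w$ arcs of $HC(v)$ carries a chord, I will produce a forbidden configuration. First I would fix the geometric picture supplied by Theorem~\ref{thm:locally_Hamiltonian}. The Hamilton cycle $HC(v)=v_0v_1\cdots v_{d-1}$ is contractible and hence bounds a closed disk $D$ whose interior contains $v$ and which is triangulated by the fan of faces $vv_iv_{i+1}$; the complementary region $M$ is a M\"obius band (the projective plane with an open disk removed), and every vertex other than $v,v_0,\dots,v_{d-1}$ lies in $M$. The vertices $u$ and $w$ split $HC(v)$ into the two arcs in question, and it suffices to treat a putative chord on one of them. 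Since $D$ is already fully triangulated by the fan, any edge $e=v_pv_q$ joining two vertices that are non-adjacent on $HC(v)$ cannot run through the interior of $D$; hence $e$ is a properly embedded arc of $M$ with both endpoints on $\partial M=HC(v)$.

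Next I would examine the triangle $T=vv_pv_q$ cut out by such a chord. As $v_p$ and $v_q$ are not consecutive on $HC(v)$, the triangle $T$ is not one of the faces at $v$. Here niceness has a convenient reformulation: in a nice triangulation every contractible $3$-cycle is a face, since its interior is a triangulated disk with the three cycle-vertices on its boundary and, by niceness, no interior vertex, hence a single triangle. Consequently the non-facial triangle $T$ cannot be contractible. This disposes of the easy case and forces $e$ to be an \emph{essential} arc of $M$, i.e.\ one whose removal cuts $M$ into a disk.

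The heart of the argument, and the point where the hypothesis that $C$ is non-contractible finally enters, is to rule out this essential case. I would write $C$ as the concatenation of the two edges $uv,vw$ with a $u$--$w$ path $P_C$ in $G-v$; as the interior of $D$ carries only the fan, $P_C$ lies in $M$ with its endpoints on $\partial M$. Closing $P_C$ up along one boundary arc of $M$ recovers the homotopy class of $C$, so non-contractibility of $C$ forces $P_C$ to be an essential arc of $M$ as well. But $u=v_0$ and $w=v_k$ lie on the \emph{same} side of the chord $e$ along $HC(v)$, namely in the boundary arc avoiding $v_{p+1},\dots,v_{q-1}$. Hence, after cutting $M$ open along the essential arc $e$ into a disk, the path $P_C$ becomes an arc with both ends on a single boundary arc of that disk, so it is boundary-parallel and therefore inessential in $M$. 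This contradicts the essentiality of $P_C$, forcing $C$ to be contractible --- the desired contradiction.

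I expect the last step to be the main obstacle, since it is genuinely a statement about arcs in a non-orientable surface rather than a routine planarity argument; making ``same side of $e$'' and ``boundary-parallel'' precise, and checking the degenerate positions (for instance when $P_C$ passes through an endpoint of $e$), needs a little care with the M\"obius band. I would also record the one chord the statement tacitly excludes, namely an edge joining the two endpoints $u$ and $w$ themselves: such an edge is a chord of \emph{both} arcs, and the cutting argument does not exclude it because its endpoints coincide with those of $P_C$. It is ruled out only because the cycle $C$ to which the observation is applied may be taken induced, so that $uw\notin E(G)$. With that understood, the two cases above cover every chord and show that both $u$--$w$ paths along $HC(v)$ are induced.
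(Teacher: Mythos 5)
Your main case is sound, and it is in substance a rigorous, contrapositive rendering of the paper's one\-line proof. The paper asserts that, because $C$ is non-contractible, the triangle $T=vv_pv_q$ ``must be contractible'' with the intermediate vertices $v_{p+1},\dots,v_{q-1}$ in its interior, contradicting niceness; you instead argue that niceness forces $T$ to be non-contractible (equivalently, $e$ essential in the M\"obius band $M$) and then derive that $C$ would be contractible. It is the same underlying dichotomy, organized in the opposite direction, and for a chord whose endpoints avoid $u$ and $w$ (and with $C$ induced, so that $P_C$ is disjoint from $e$ and meets $\partial M$ only at $u,w$) your cutting argument is correct and complete. Your side remark about the tacitly excluded chord $uw$ is also correct: for a non-facial (hence non-contractible) triangle $C=vab$ of the nice triangulation $K_6$, both $u$--$w$ arcs of $HC(v)$ have chords, so the observation genuinely needs $C$ induced of length at least $4$.

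The genuine gap is exactly the case you set aside as ``degenerate positions needing a little care'': a chord incident to $u$ or $w$, say $e=uv_q$ with $3\le q<k$. This cannot be repaired by more care with the M\"obius band; the argument fails there. After cutting $M$ along the essential arc $e$, the shared endpoint $u$ has \emph{two} copies on the boundary of the resulting disk, one incident to the boundary arc $A_1=u,v_2,\dots,v_q$ and one incident to $A_2$. The lift of $P_C$ starts at whichever copy matches the side of $e$ on which $C$ leaves $u$. If $C$ leaves $u$ on the $A_1$-side of $e$, the lift runs from the $A_1$-copy of $u$ to $w\in A_2$, and the boundary path it is parallel to traverses a copy of $e$; hence $P_C$ is homotopic rel endpoints to $e$ followed by a piece of $\partial M$, which is \emph{essential}, not boundary-parallel. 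So no contradiction with the essentiality of $P_C$ arises: both $e$ and $P_C$ can be essential simultaneously (equivalently, $C$ then crosses $T$ exactly once, at $u$, so the mod-$2$ intersection count is consistent with both $C$ and $T$ being non-contractible). Moreover this configuration is realizable: one can build a nice (non-Eulerian) $11$-vertex triangulation of the projective plane containing an induced non-contractible $5$-cycle $C=v,u,x,y,w$ and an essential chord $uv_q$ of the arc $P$, with the edge $ux$ lying between $e$ and $v_2$ in the rotation at $u$; all non-facial triangles there are non-contractible, so niceness holds while the conclusion fails. So this subcase is not an unfinished technicality but a hole that niceness and non-contractibility of $C$ alone cannot close. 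To be fair, the paper's own proof has the same lacuna --- its unproved assertion that $T$ ``must be contractible'' is precisely what fails in this configuration --- but as a proof of the stated observation, your proposal is incomplete at the very point you flagged.
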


\begin{proof}
Suppose that the path $P=v_1, v_2, \ldots, v_k$  with $u=v_1$ and $w=v_k$ on the Hamilton cycle of $v$ is not induced. Then, there are vertices  $v_i$ and $v_{i + \ell}$ of $P$ with $\ell \geq 2$ that are adjacent in $G$. As $C$ is non-contractible, the triangle $v,v_i,v_{i + \ell},v$ must be contractible and its interior contains the vertices $v_{i+1}, \ldots, v_{i + \ell-1}$. This shows that $G$ is not a nice triangulation.
\end{proof}

A triangulation is called~\emph{Eulerian} if each vertex has even degree. The next
observation implies that a $t$-perfect triangulation must be Eulerian.

\begin{observation} \label{obs:Eulerian}
Let $G$ be a triangulation of any surface and let $G$ contain a vertex whose neighbourhood is not bipartite. Then, $G$ does not satisfy any of the assertions given in Theorem~\ref{thm:mainthm_4_equivalences}. 
\end{observation}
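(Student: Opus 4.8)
The plan is to produce a single forbidden structure --- an induced odd wheel --- from the hypothesis, and then to read off the failure of each of the four assertions from it. I note at the outset that the surface will play no role here: the argument uses only that $v$ has a non-bipartite neighbourhood, so it applies verbatim to a triangulation of any surface (and, incidentally, would hold for any graph possessing such a vertex).

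First I would extract an odd wheel. Since $N_G(v)$ is not bipartite it contains an odd cycle $C$; set $p := |V(C)|$, which is odd and at least $3$. Every vertex of $C$ is a neighbour of $v$ and $v \notin V(C)$, so $v$ together with $C$ forms an odd wheel $W_p$ with centre $v$ --- at least as a subgraph of $G$. Invoking the fact recorded in Section~\ref{sec:t-perfection} that a graph containing an odd wheel as a subgraph contains one as an induced subgraph, it follows that $G$ contains some odd wheel $W_q$ (with $q\geq 3$ odd) as an \emph{induced} subgraph.

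Now I read off the four failures. An odd wheel is a \low, so $G$ contains a \low as an induced subgraph, which is precisely the negation of assertion~\ref{item:no_low_C7bar}. By~\eqref{eq:low->t-imp} (equivalently, by~\eqref{eq:odd_wheels_t-imp} together with the fact that vertex deletion preserves $t$-perfection) the graph $G$ is not $t$-perfect, so assertion~\ref{item:t-perfect} fails; and since strong $t$-perfection implies $t$-perfection by~\eqref{eq:strong_tp->tp}, assertion~\ref{item:strongly_t-perfect} fails as well. Finally, to refute~\ref{item:perfect_without_K4} I would split on $q$: if $q = 3$ then $W_3 = K_4$ occurs as an induced subgraph, so $G$ contains a $K_4$; and if $q \geq 5$ then the rim of the induced $W_q$ is an induced odd cycle on at least five vertices, i.e. an odd hole, so $G$ is not perfect. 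Either way assertion~\ref{item:perfect_without_K4} fails, completing the proof.

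The only steps requiring any care --- and the closest thing to an obstacle --- are the passage from a subgraph odd wheel to an induced one (needed so that the forbidden structure in~\ref{item:no_low_C7bar} genuinely occurs as an induced subgraph, and so that the $K_4$/odd-hole dichotomy for~\ref{item:perfect_without_K4} applies cleanly) and the small case distinction on the parity-preserving length $q$. I do not expect any substantive difficulty beyond these, since the surface structure is never used.
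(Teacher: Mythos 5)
Your proposal is correct and follows essentially the same route as the paper: extract an odd wheel with centre $v$ from an odd cycle in the non-bipartite neighbourhood, then read off the failure of all four assertions via~\eqref{eq:low->t-imp}, \eqref{eq:strong_tp->tp}, and the $K_4$/odd-hole dichotomy. The only cosmetic difference is that the paper takes an \emph{induced} odd cycle in $N_G(v)$ from the start (so the wheel is immediately induced), whereas you take an arbitrary odd cycle and then invoke the subgraph-to-induced-odd-wheel fact recorded in Section~\ref{sec:t-perfection}; both are fine.
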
 

\begin{proof}
Let the neighbourhood of $v \in V(G)$ contain an odd induced cycle $C$. 
Then, $C$ and $v$ form an odd wheel $W$ and $G$ is not (strongly) $t$-perfect by~\eqref{eq:low->t-imp}. If $C$ is a triangle, $W$ is a $K_4$; otherwise, $C$ is an odd hole and $G$ is imperfect (see~Theorem~\ref{thm:SPGT}).
\end{proof}

In a triangulation, a vertex of odd degree has a non-bipartite neighbourhood (Theorem~\ref{thm:locally_Hamiltonian}). It thus follows from Observation~\ref{obs:Eulerian} that in order to prove Theorem~\ref{thm:mainthm_4_equivalences},
\begin{equation}\label{eq:Eulerian}
\text{ we can assume that the triangulation is Eulerian.}
\end{equation}

The next theorem follows directly from a characterisation of $t$-perfect triangulations of the plane given by  Bruhn and Benchetrit~\cite[Theorem 2]{Ben_Bru15}.

\begin{theorem}\label{thm:cycles_triang} 
Every triangulation $G$ of the projective plane that contains a contractible odd hole also contains a loose odd wheel. 
\end{theorem}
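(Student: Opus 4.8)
The plan is to reduce the statement to a planar one and then to argue with an innermost odd hole. Since $C$ is contractible it bounds a disk, so the part $T$ of $G$ drawn on the closed region $\overline{S_C}$ is a near-triangulation of a disk whose outer boundary is the odd cycle $C$. Because $C$ is an (induced) odd hole it has no chords, and a $k$-gon with $k=|C|\ge 5$ cannot be triangulated without interior vertices; hence $T$ has at least one vertex in $S_C$. Everything now takes place inside this planar disk, so the whole task is to produce a \low inside $T$, which then lies in $G$. This is exactly the regime governed by the characterisation of $t$-perfect plane triangulations of Bruhn and Benchetrit~\cite{Ben_Bru15}: one clean way to finish would be to close $T$ into an honest plane triangulation (for instance by adding an apex adjacent to all of $C$) and to quote their Theorem~2. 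The price of that route is that one must then argue that the \low it returns can be taken off the artificial apex, i.e.\ to lie inside the disk.

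I would prefer a self-contained argument by minimality, which also pinpoints where the apex difficulty really lives. I choose $C$ so that its interior $S_C$ is inclusion-minimal among all contractible odd holes of $G$, and keep $T$ as above. First I inspect an interior vertex $v$: by Ringel (Theorem~\ref{thm:locally_Hamiltonian}) the neighbourhood $N(v)$ carries the cycle $HC(v)$ of length $\deg v$. If some interior $v$ has odd degree, then $HC(v)$ is an odd cycle, so $N(v)$ is non-bipartite, and exactly as in the proof of Observation~\ref{obs:Eulerian} an induced odd cycle $O\subseteq N(v)$ together with $v$ forms an odd wheel (a $K_4$ when $|O|=3$), hence a \low lying in the disk, so in $G$. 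Thus I may assume that every interior vertex has even degree.

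The remaining case is settled by a parity count and a shrinking step. Writing $b=|C|$ and $i$ for the numbers of boundary and interior vertices of $T$, Euler's formula for the triangulated disk gives $2\,|E(T)|=4b+6i-6$, an even number. Each interior vertex keeps all its neighbours inside the disk, so its degree in $T$ equals its degree in $G$ and is even by assumption; hence the boundary degrees sum to an even number. As $b=|C|$ is odd, they cannot all be odd, so some boundary vertex $c$ has even degree $\deg_T c\ge 4$ (for $k\ge5$ the triangle on a boundary edge has its apex in the interior, so $\deg_T c\neq 2$). I now absorb $c$: replacing $c$ and its two boundary edges by the interior arc of $HC(c)$ between the two $C$-neighbours of $c$ yields a cycle $C'$ bounding a strictly smaller sub-disk $D'$, since the former interior neighbours of $c$ (at least two) have moved onto the boundary. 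A length count gives $|C'|=b+\deg_T c-3$, which is odd because $\deg_T c$ is even. Hence $D'$ contains an odd cycle, so it contains an innermost induced odd cycle $C''$; this $C''$ is a contractible odd hole whose interior lies inside that of $D'$ and is therefore strictly smaller than $S_C$, contradicting the minimality of $C$. Consequently the even-degree case cannot occur, and the odd-degree case above always applies.

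The step I expect to be the main obstacle is this shrinking move. One must be certain that absorbing $c$ genuinely shrinks the region and that the result still contains an odd \emph{hole} rather than merely an odd closed walk: the cycle $C'$ need not be induced, so chords have to be removed by descending to an innermost induced odd cycle inside $D'$, and one must check its interior is strictly contained in $S_C$. This is precisely the bookkeeping that, on the planar route, reappears as the problem of keeping the Bruhn and Benchetrit \low off the auxiliary apex. Everything else — the existence of an interior vertex, the non-bipartite-neighbourhood shortcut, and the Euler/parity computation — is routine.
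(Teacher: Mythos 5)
Your self-contained argument has a genuine gap, and it sits exactly at the shrinking step you flagged --- but it is not repairable bookkeeping: the step is arithmetically wrong, and the contradiction it aims for does not exist. First the arithmetic: with $b=|C|$ odd and $\deg_T c$ even, the absorbed cycle has length $|C'| = b + \deg_T c - 3 \equiv 1 + 0 - 1 \equiv 0 \pmod{2}$, so $C'$ is \emph{even}, not odd. Absorbing the even-degree boundary vertex that your Euler count provides therefore destroys oddness; oddness would be preserved only by absorbing an \emph{odd}-degree boundary vertex, and nothing in the parity count guarantees one exists (it only says the boundary degrees have even sum). Moreover, even when an odd cycle does survive such a step, its innermost induced odd subcycle may simply be a triangle, which is not an odd hole and hence does not contradict the minimality of $C$.

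Second, and decisively: the statement you are trying to refute --- that a minimal contractible odd hole can have all interior vertices of even degree --- is true, so no repair of this strategy can work. Take a triangle $t_1t_2t_3$ surrounded by a $9$-cycle $w_1w_2\cdots w_9$, with $t_1$ adjacent to $w_1,w_2,w_3,w_4$, with $t_2$ adjacent to $w_4,w_5,w_6,w_7$, and with $t_3$ adjacent to $w_7,w_8,w_9,w_1$; this is the closed second neighbourhood of a triangle in the $6$-regular triangular lattice. It is a triangulated disk, its boundary $9$-cycle is induced, and all three interior vertices have degree $6$. It extends routinely to a triangulation of the projective plane in which $w_1\cdots w_9$ stays induced and contractible (pad with further lattice layers, then close the outside into a triangulated M\"obius band), and inside the disk the only induced odd cycles are triangles and the $9$-cycle itself, so this $C$ is automatically inclusion-minimal. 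Your proof declares this configuration impossible; it is not. The theorem still holds for it, but the witness is a loose odd wheel that is not an odd wheel: $w_1,w_2,w_3$ are three odd neighbours of $t_1$ on $C$ (the three connecting paths have lengths $1$, $1$ and $7$), while $N(t_1)$ induces a $6$-cycle and is bipartite. Since your argument only ever produces odd wheels (from non-bipartite neighbourhoods of interior vertices), it structurally cannot prove the theorem; note that the paper applies this theorem to Eulerian triangulations, i.e.\ precisely in the regime where every interior vertex has even degree and such ``bipartite-centre'' loose odd wheels are the only possible witnesses. For comparison, the paper does not prove the theorem from scratch at all: it quotes it as an immediate consequence of Bruhn and Benchetrit's Theorem~2 on plane triangulations --- the route you sketched in your first paragraph (apex trick included) and then set aside.
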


To prove Theorem~\ref{thm:mainthm_4_equivalences}, we thus can assume that  in our triangulation 
\begin{equation}\label{eq:odd_cycles_noncontractible}
\text{all odd cycles are non-contractible.}
\end{equation}

There is another useful property of a graph that forces a loose odd wheel.

\begin{observation} \label{obs:deg4->-loose_odd_wheel}
Let $G$ be a triangulation of the projective plane. If $G$ contains an odd hole with a vertex of degree $4$, then $G$ contains a loose odd wheel. 
\end{observation}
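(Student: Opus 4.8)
The plan is to exhibit an explicit loose odd wheel using the degree-$4$ vertex on the odd hole together with the local Hamilton-cycle structure guaranteed by Theorem~\ref{thm:locally_Hamiltonian}. Let $C$ be the odd hole and let $v \in V(C)$ have $\deg_G(v) = 4$. By Theorem~\ref{thm:locally_Hamiltonian} the neighbourhood $N_G(v)$ carries a Hamilton cycle $HC(v)$; since $\deg_G(v)=4$, this is a $4$-cycle $a,b,c,d,a$ on the four neighbours of $v$. The two neighbours of $v$ along $C$, call them $u$ and $w$, both lie on this $4$-cycle, so they are two of the vertices $a,b,c,d$. The idea is to find a vertex outside $C$ — most naturally $v$ itself relative to a \emph{modified} odd cycle obtained by rerouting $C$ through $HC(v)$ — that has three odd neighbours on an odd cycle.

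First I would use the $4$-cycle structure to reroute. Deleting $v$ from $C$ leaves a path $P$ on $V(C)\setminus\{v\}$ from $u$ to $w$; since $|C|$ is odd, $P$ is an even path. On the $4$-cycle $HC(v)$ the two arcs between $u$ and $w$ have lengths summing to $4$, hence one arc has length $2$ through the remaining two neighbours $x,y$ of $v$, and the other is the edge $uw$ if $u,w$ are adjacent on $HC(v)$. Replacing the vertex $v$ in $C$ by the length-$2$ detour $u,x,w$ (where $x$ is the common neighbour) produces a closed walk whose parity flips relative to $C$; combined with Observation~\ref{obs:induced_cycles}, which guarantees the detour arcs are induced, I can extract a genuine odd cycle $C'$ avoiding $v$ with $v$ adjacent to several of its vertices. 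The goal is to arrange that $v$ has three neighbours $v_1,v_2,v_3$ on $C'$ splitting $C'$ into three odd subpaths.

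The key parity bookkeeping is the core of the argument: I would track how the lengths of the three arcs $\subpath{C'}{v_1}{v_2}{v_3}$, $\subpath{C'}{v_1}{v_3}{v_2}$, $\subpath{C'}{v_2}{v_3}{v_1}$ depend on the original even length of $P$ and the length-$2$ detour, and choose $v_1,v_2,v_3$ among $\{a,b,c,d\}$ so that all three arcs come out odd. Since $v$ is adjacent to all four vertices $a,b,c,d$ of $HC(v)$, there are several candidate triples, and the evenness of $P$ together with the fixed detour length should force at least one valid choice. Here I can also invoke~\eqref{eq:odd_cycles_noncontractible}: having reduced to the case that all odd cycles are non-contractible, $C$ and hence $C'$ is non-contractible, which is exactly the hypothesis needed to apply Observation~\ref{obs:induced_cycles} and guarantee the rerouted arcs are induced paths rather than chorded ones.

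The main obstacle I anticipate is the case analysis forced by the possible positions of $u$ and $w$ on the $4$-cycle $HC(v)$: $u$ and $w$ may be adjacent on $HC(v)$ (an edge-detour) or antipodal (a genuine length-$2$ detour on each side), and these give different parity contributions. I expect to need a short enumeration of where $u,w$ sit among $a,b,c,d$ and, in each case, to verify that some triple of neighbours yields three odd arcs on the resulting non-contractible odd cycle $C'$. The parity arithmetic itself is routine once the configuration is fixed; the care lies in confirming that the chosen cycle is genuinely odd and induced, so that $C'$ together with $v$ forms a bona fide \low by the definition in Section~\ref{sec:t-perfection}.
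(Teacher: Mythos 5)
Your starting point --- the $4$-cycle structure of $HC(v)$ at the degree-$4$ vertex --- is the right one, but the proof as planned has concrete errors and leaves the decisive step open. First, the parity bookkeeping is wrong on both counts: the path $P = C - v$ has $|C|-2$ edges, hence is \emph{odd} (not even), and replacing the length-$2$ path $u,v,w$ of $C$ by the length-$2$ detour $u,x,w$ \emph{preserves} parity rather than flipping it; your rerouted cycle $C'$ ends up odd only because these two false claims happen to cancel. Second, and more importantly, you never resolve the configuration of $u$ and $w$ on $HC(v)$, and you even contemplate the case that they are adjacent on it. That case cannot occur: $C$ is an odd \emph{hole}, so $u$ and $w$ are non-adjacent in $G$, and therefore they must be antipodal on the $4$-cycle $HC(v)$, i.e.\ $HC(v)=u,x,w,y,u$. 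This single remark, which is the heart of the paper's proof, eliminates your entire anticipated case enumeration. Third, the auxiliary results you invoke are not available here: Observation~\ref{obs:induced_cycles} assumes a \emph{nice} triangulation and a non-contractible cycle, and \eqref{eq:odd_cycles_noncontractible} is a reduction made inside the proof of the main theorem, not a property of all triangulations; the observation you are proving is stated for arbitrary triangulations of the projective plane, with no such hypotheses.

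Once you know $HC(v)=u,x,w,y,u$, the rerouting is unnecessary and in fact creates a new difficulty: your $C'$ may have chords from $x$ to inner vertices of $P$, so it need not be induced, which you flag but do not handle. The paper instead keeps the original hole $C$ and takes $x$ as the \emph{center}: $x\notin V(C)$ since $C$ is induced, and $u,v,w$ are three odd neighbours of $x$ on $C$, because $\subpath{C}{u}{v}{w}$ and $\subpath{C}{v}{w}{u}$ are single edges while $\subpath{C}{u}{w}{v}$ has odd length $|C|-2$. So $C$ together with $x$ is already a \low, with no parity analysis, no rerouting, and no topological assumptions.
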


\begin{proof} 
Let $u,v,w$ be three consecutive vertices on an odd hole and let $v$ have neighbourhood $N_G(v)=\lbrace u,w,x,y \rbrace$. As $u$ and $w$ cannot be adjacent, the Hamilton cycle around $v$ equals $u,x,w,y,u$ and $C$ forms a loose odd wheel together with $x$.
\end{proof}


As we have seen in~\eqref{eq:Eulerian}, it suffices to analyse Eulerian triangulations of the projective plane to prove Theorem~\ref{thm:mainthm_4_equivalences}. Suzuki and Watanabe~\cite{SuWa07}  introduced the following operations that modify Eulerian triangulations.

\begin{definition} \label{def:operations}
Let $G$ be an Eulerian triangulation of the projective plane. Let~$x \in V(G)$ be a vertex with Hamilton cycle $a,b,a',b',a$ where the set of common neighbours of $b$ and $b'$ equals $ \lbrace a,a',x \rbrace$. An~\emph{even-contraction} at $x$ together with $b$ and $b'$ identifies the vertices $x,b,b'$ to a new vertex $y$ and removes loops as well as multiple edges. \\
The inverse operation is called an~\emph{even-splitting} at $y$. 
\end{definition}

\begin{figure}
\begin{center}
\begin{tikzpicture} [ scale = .37]

\begin{scope} [shift={(0,0.5)}]
\node[hvertex] (x0) at (0,0){};
\node[hvertex] (x1) at (-1.5,2){};
\node[hvertex] (x2) at (1.5,2){};
\node[hvertex] (x3) at (1.5,-2){};
\node[hvertex] (x4) at (-1.5,-2){};

\node[novertex] (b2) at (-2.5,2.7){};
\node[novertex] (b3) at (-1.5,3.2){};
\draw[hedge] (b2) -- (x1);
\draw[hedge] (b3) -- (x1);

\node[novertex] (b'2) at (2.4,-2.6){};
\node[novertex] (b'3) at (1.7,-3){};
\draw[hedge] (b'2) -- (x3);
\draw[hedge] (b'3) -- (x3);

\node[novertex] (y0) at (0.8,0){$x$};
\node[novertex] (y1) at (-2.2,1.8){$b$};
\node[novertex] (y2) at (2,1.8){$a$};
\node[novertex] (y3) at (2.2,-1.5){$b'$};
\node[novertex] (y4) at (-2,-1.5){$a'$};

\draw[hedge] (x1) -- (x2);
\draw[hedge] (x2) -- (x3);
\draw[hedge] (x3) -- (x4);
\draw[hedge] (x4) -- (x1);

\draw[hedge] (x0) -- (x1);
\draw[hedge] (x0) -- (x2);
\draw[hedge] (x0) -- (x3);
\draw[hedge] (x0) -- (x4);
\end{scope}

\begin{scope} [shift={(5.5,1)}]

\node[novertex] (y1) at (-3,.3){};
\node[novertex] (y2) at (-3,-.2){};

\node[novertex] (y3) at (-1,.3){};
\node[novertex] (y4) at (-1,-.2){};

\draw[pedge] (y1) -- (y3);

\draw[pedge] (y4) -- (y2);

\end{scope}

\begin{scope} [shift={(7,0.5)}]

\node[hvertex] (x0) at (0,0){};
\node[hvertex] (x1) at (-1.5,-2){};
\node[hvertex] (x3) at (1.5,2){};

\node[novertex] (y0) at (1,0){$y$};
\node[novertex] (y2) at (2,1.8){$a$};
\node[novertex] (y4) at (-2,-1.5){$a'$};

\draw[hedge] (x0) -- (x1);
\draw[hedge] (x0) -- (x3);

\node[novertex] (b2) at (-1,.7){};
\node[novertex] (b3) at (0,1.2){};
\draw[hedge] (b2) -- (x0);
\draw[hedge] (b3) -- (x0);

\node[novertex] (b'2) at (0.9,-.6){};
\node[novertex] (b'3) at (.2,-1){};
\draw[hedge] (b'2) -- (x0);
\draw[hedge] (b'3) -- (x0);

\end{scope}

\begin{scope}  [shift={(15,0)}]


\node[hvertex] (x1) at (0,-1){};
\node[novertex] (lx1) at (0,-1.6){$y$};
\node[hvertex] (x2) at (-.6,-0.1){};
\node[novertex] (lx2) at (-1,0.1){$x$};
\node[hvertex] (x3) at (.6,-0.1){};
\node[novertex] (lx3) at (1,0.1){$z$};

\draw[hedge] (x1) -- (x2);
\draw[hedge] (x2) -- (x3);
\draw[hedge] (x3) -- (x1);

\node[hvertex] (x4) at (0,3){};
\node[novertex] (lx4) at (0.8,3){$w$};
\node[hvertex] (x5) at (-3,-2){};
\node[novertex] (lx5) at (-3.4,-2.5){$u$};
\node[hvertex] (x6) at (3,-2){};
\node[novertex] (lx6) at (3.4,-2.5){$v$};

\draw[hedge] (x4) -- (x5);
\draw[hedge] (x5) -- (x6);
\draw[hedge] (x6) -- (x4);

\draw[hedge] (x1) -- (x6);
\draw[hedge] (x1) -- (x5);

\draw[hedge] (x2) -- (x5);
\draw[hedge] (x2) -- (x4);

\draw[hedge] (x3) -- (x6);
\draw[hedge] (x3) -- (x4);

\end{scope}

\begin{scope} [shift={(21,1)}]

\node[novertex] (y1) at (-3,.3){};
\node[novertex] (y2) at (-3,-.2){};

\node[novertex] (y3) at (-1,.3){};
\node[novertex] (y4) at (-1,-.2){};

\draw[pedge] (y1) -- (y3);

\draw[pedge] (y4) -- (y2);

\end{scope}

\begin{scope} [shift={(23,0)}]

\node[hvertex] (x4) at (0,3){};
\node[hvertex] (x5) at (-3,-2){};
\node[hvertex] (x6) at (3,-2){};

\draw[hedge] (x4) -- (x5);
\draw[hedge] (x5) -- (x6);
\draw[hedge] (x6) -- (x4);

\node[novertex] (lx4) at (0.8,3){$w$};
\node[hvertex] (x5) at (-3,-2){};
\node[novertex] (lx5) at (-3.4,-2.5){$u$};
\node[hvertex] (x6) at (3,-2){};
\node[novertex] (lx6) at (3.4,-2.5){$v$};

\end{scope}

\end{tikzpicture}
\end{center}

\caption{Even-contraction and deletion of an octahedron}
\label{fig:def_operations}
\end{figure}
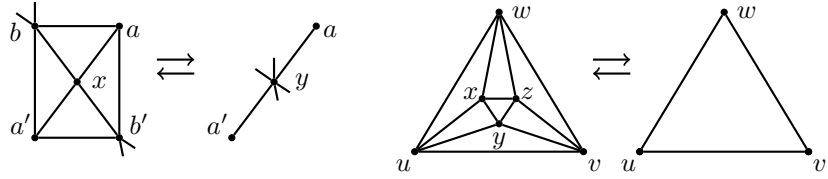
Figure~\ref{fig:def_operations} shows an even-contraction. 
Note that the graph obtained from an Eulerian triangulation by an even-contraction or an even-splitting is again an Eulerian triangulation of the projective plane. 
An even-splitting is always unique: The embedding directly yields the partition of the neighbours of $y$ into neighbours of $b$ and of $b'$.

We note some useful observations: \\
If we can apply an even-contraction, then
\begin{equation}
N_G(b) \cap N_G(b')= \lbrace  a,a',x \rbrace.\label{eq:neighbours_of_b_b'}
\end{equation}
Further, not only the common neighbours of $b$ and $b'$ are restricted.
If  $bb' \in E(G)$, then $\lbrace b,b',a,x \rbrace$  induces a $K_4$, thus
\begin{equation}
\text{ we may assume that $bb'\notin E$.} \label{eq:b_b'_not_adjacent}
\end{equation}

Suzuki and Watanabe~\cite{SuWa07} pointed out that
even-contraction and even-splitting preserve $3$-colourability.
This leads to the following observation:

\begin{observation} \label{obs:3colours}
Let $G'$ be obtained from  $G$ by an even-contraction. If $G'$ is perfect without $K_4$, then $G$ is $3$-colourable.
\end{observation}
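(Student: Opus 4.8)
The plan is to deduce $3$-colourability of $G$ from $3$-colourability of $G'$, using that the latter follows cheaply from the hypotheses and that proper colourings survive the even-splitting which reverses the even-contraction.

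First I would note that $G'$ is $3$-colourable. Since $G'$ is perfect, its chromatic number equals its clique number on every induced subgraph, in particular $\chi(G')=\omega(G')$. As $G'$ contains no $K_4$, every clique of $G'$ has at most three vertices, so $\omega(G')\le 3$; perfection then gives $\chi(G')=\omega(G')\le 3$. Hence $G'$ admits a proper colouring with three colours.

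Next I would lift this colouring back to $G$. By definition the even-contraction producing $G'$ from $G$ is reversible, and $G$ arises from $G'$ by the corresponding even-splitting at the identified vertex $y$ (which, as noted above, is uniquely determined by the embedding). The observation of Suzuki and Watanabe that even-splitting preserves $3$-colourability then immediately turns the $3$-colouring of $G'$ into one of $G$, which is exactly the assertion.

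The argument is short, and I expect no genuine obstacle: the only substance beyond bookkeeping is imported as the quoted preservation result. The single point deserving care is the \emph{direction} in which that result is used, namely that a $3$-colouring of the \emph{contracted} graph extends to the \emph{split} graph, i.e. preservation under even-splitting rather than under even-contraction. Concretely, splitting $y$ back into $x$, $b$, $b'$ requires redistributing the colour of $y$ among these three vertices consistently with the $4$-cycle $a,b,a',b'$ around $x$ and with the common neighbours $a,a'$; verifying that such a consistent redistribution always exists is precisely the content Suzuki and Watanabe supply, so here it may simply be invoked rather than re-derived.
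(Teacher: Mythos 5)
Your proposal is correct and matches the paper's own (implicit) argument exactly: the paper derives the observation from perfection and the absence of $K_4$ giving $\chi(G')=\omega(G')\le 3$, combined with the cited result of Suzuki and Watanabe that even-splitting preserves $3$-colourability. Your added remark about the direction of the preservation result (splitting, not contraction) is precisely the right point of care, and nothing further is needed.
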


Suzuki and Watanabe~\cite{SuWa07} defined one more operation:

\begin{definition} 
Let $G$ be an Eulerian triangulation of the  projective plane. 
Let $u,v,w, u$ be  a contractible triangle in $G$ whose interior contains only the vertices $ x,y,z $ and the edges $xy, xz, yz, ux,uy,vy,vz,wx, wz$. The deletion of the vertices $x,y,z$ is called a \emph{deletion of an octahedron.}\\
The inverse operation is called an~\emph{attachment of an octahedron}. 
\end{definition}

Figure~\ref{fig:def_operations} shows a deletion of an octahedron. 

We call an Eulerian triangulation~\emph{irreducible} if no deletion of an octahedron and no even-contraction can be applied. Suzuki and Watanabe~\cite{SuWa07} listed all irreducible Eulerian triangulations. These graphs are treated in Section~\ref{sec:irreducible}.

\section{Proof of Theorem~\ref{thm:mainthm_4_equivalences} }\label{sec:proof}

The proof of Theorem~\ref{thm:mainthm_4_equivalences} is inductive. Lemma~\ref{lem:irreducible} provides the induction start. For the induction step, we consider a nice triangulation with an odd hole to which we apply an even-contraction. Lemma~\ref{lem:even-contraction}, Lemma~\ref{lem:even-split_low} and Lemma~\ref{lem:even-split_anti-hole} treat the different structures of the obtained graph.

\begin{lemma}
\label{lem:irreducible}
Every irreducible triangulation $G$ of the projective plane that contains no \low and no $\overline{C_7}$ as an induced subgraph is perfect and contains no $K_4$.
\end{lemma}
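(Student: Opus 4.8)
The plan is to exploit the fact that irreducible Eulerian triangulations of the projective plane are completely classified by Suzuki and Watanabe~\cite{SuWa07}, so the lemma reduces to a finite check. First I would invoke~\eqref{eq:Eulerian} and Observation~\ref{obs:Eulerian} to restrict attention to Eulerian triangulations: a triangulation with a vertex of odd degree has a non-bipartite neighbourhood, hence contains an odd wheel and a fortiori a \low, so such graphs are excluded by hypothesis. Thus the only irreducible triangulations I must examine are the Eulerian ones appearing on the Suzuki--Watanabe list. For each graph $G$ on that (finite) list, the goal is to verify the contrapositive-free statement directly: if $G$ contains neither a \low nor a $\overline{C_7}$ as an induced subgraph, then $G$ is perfect and $K_4$-free.

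The first structural observation I would use to prune the list is~\eqref{eq:odd_wheels_t-imp} together with the remark that $W_3 = K_4$: since $K_4$ is itself an odd wheel and every odd wheel is a \low, any $G$ containing a $K_4$ already contains a \low, contradicting the hypothesis. Hence the $K_4$-freeness half of the conclusion is automatic once I discard the graphs on the list that carry a $K_4$ (equivalently a triangle with a common neighbour, i.e.\ a vertex of degree $4$ whose Hamilton $4$-cycle has a chord, or more simply any $W_3$). For the perfection half I would appeal to the Strong Perfect Graph Theorem (Theorem~\ref{thm:SPGT}): $G$ is perfect iff it contains no odd hole and no odd anti-hole as an induced subgraph. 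The anti-hole side is handled by the explicit exclusion of $\overline{C_7}$ in the hypothesis, provided one checks that no larger odd anti-hole $\overline{C_k}$ with $k \geq 9$ can occur; here I would argue by degree/size bounds, since each of the finitely many irreducible triangulations is small and $\overline{C_k}$ requires $k$ vertices each of high degree, which the small irreducible graphs cannot supply beyond $k=7$.

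The remaining and most delicate task is to rule out odd holes. The key leverage is that in a nice triangulation every odd cycle of interest is non-contractible~(\eqref{eq:odd_cycles_noncontractible}), so an odd hole must wrap around the projective plane; I would combine this with Observation~\ref{obs:deg4->-loose_odd_wheel}, which says that an odd hole through a degree-$4$ vertex forces a \low. Since the irreducible Eulerian triangulations are minimal and tend to have many low-degree vertices, this observation should eliminate most candidate odd holes immediately, and the few remaining configurations can be checked by hand against each listed graph. The main obstacle I anticipate is precisely this case analysis over the Suzuki--Watanabe list: one must be careful that the ``no \low'' hypothesis is used correctly to exclude every odd hole, using Observation~\ref{obs:deg4->-loose_odd_wheel} and the three-odd-neighbours characterisation of a \low, rather than merely checking the smallest holes. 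I would organise the verification graph-by-graph, first discarding those with a $K_4$, then those whose only odd holes pass through a degree-$4$ vertex (hence yield a \low), and finally confirming that the survivors are genuinely perfect and $K_4$-free.
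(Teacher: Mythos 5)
Your overall strategy---reduce to the Suzuki--Watanabe classification and verify the statement graph by graph---is the same as the paper's, but your proposal has a genuine gap: the classification is \emph{not} finite. Besides the seventeen sporadic graphs of Figure~\ref{fig:irred_all}, it contains three infinite families $I_{16}[s_1,\ldots,s_n]$, $I_{18}[n]$ and $I_{19}[m]$, whose members have unboundedly many vertices. A plan organised as a ``finite check'', with odd holes ``checked by hand against each listed graph'', cannot treat these families. The same defect affects your exclusion of odd anti-holes $\overline{C_k}$ with $k\geq 9$, which you base on the irreducible graphs being ``small'': that premise fails for the families, and the correct argument (used in the paper's proof of Theorem~\ref{thm:mainthm_4_equivalences}) is the Euler-characteristic bound $2|E(H)|\leq 6|V(H)|-6$, which forbids embedding $\overline{C_k}$, $k\geq 9$, in the projective plane regardless of the size of the host graph.

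What the paper supplies, and what your proposal is missing, is a uniform argument for each infinite family. For $I_{18}[n]$, for $I_{19}[m]$, and for $I_{16}[s_1,\ldots,s_n]$ containing an odd number of copies of the piece $h_1$, the paper exhibits an explicit induced odd path assembled from one edge or subpath per piece (so the construction works for every $n$), which together with two further vertices forms a \low; these graphs therefore violate the hypothesis and are vacuously fine. For $I_{16}[s_1,\ldots,s_n]$ with an even number of copies of $h_1$---the one infinite subfamily that genuinely satisfies the hypothesis---the paper proves perfection not by odd-hole case analysis at all, but by a colouring argument: the $3$-colourings of the pieces $D$, $h_1$, $h_2$, $h_3$ combine to a $3$-colouring of the whole graph (hence no $K_4$), and every triangle-free subgraph is $2$-colourable, so $\chi=\omega$ holds for every induced subgraph. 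Your tools (non-contractibility of odd holes via Theorem~\ref{thm:cycles_triang} and Observation~\ref{obs:deg4->-loose_odd_wheel}) are adequate for pruning the sporadic graphs, but without an induction over the number of pieces or a colouring argument of this kind, the infinite cases remain open and the proof does not close.
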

This lemma will be shown in Section~\ref{sec:irreducible}.

\begin{lemma} \label{lem:even-contraction}
Let $G$ be a nice Eulerian triangulation and let $G'$ be obtained from $G$ by an even-contraction. If  $G$ contains an odd hole and $G'$ is perfect, then $G$ contains a loose odd wheel.
\end{lemma}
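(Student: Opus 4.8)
The plan is to track an odd hole of $G$ through the even-contraction and argue that if the contracted graph $G'$ is perfect (hence has no odd hole), then the odd hole in $G$ must have interacted with the contracted triple $\{x,b,b'\}$ in a way that forces a loose odd wheel back in $G$. Recall from Definition~\ref{def:operations} that the even-contraction identifies $x,b,b'$ to a single vertex $y$, where $x$ has Hamilton cycle $a,b,a',b',a$ and $N_G(b)\cap N_G(b')=\{a,a',x\}$; by~\eqref{eq:b_b'_not_adjacent} we may assume $bb'\notin E(G)$. Let $C$ be an odd hole in $G$. Since $G'$ is perfect, by Theorem~\ref{thm:SPGT} it contains no odd hole, so the image of $C$ under the contraction cannot be an induced odd cycle of $G'$; the contraction must therefore destroy $C$, which can only happen because $C$ passes through at least one of $x,b,b'$.

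First I would set up a careful case analysis based on how many of the three identified vertices $x,b,b'$ lie on $C$ and in what cyclic pattern. The benign cases are those where $C$ meets $\{x,b,b'\}$ in a way that its image in $G'$ is still an induced odd cycle: in those cases $G'$ would contain an odd hole, contradicting perfection, so they cannot occur. The interesting cases are where the contraction genuinely changes the parity or the inducedness of $C$'s image. Because $C$ is an odd cycle and the only vertices whose identification can alter it are $x,b,b'$, the length or the chord structure of the image differs from $C$ precisely when $C$ uses two of the three identified vertices (so that a subpath of $C$ collapses) or when two vertices of $C$ that were non-adjacent in $G$ become adjacent to the merged vertex $y$. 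I would enumerate these configurations using the local structure around $x$ given by its Hamilton cycle $a,b,a',b',a$ and the constraint~\eqref{eq:neighbours_of_b_b'}.

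The key step is to show that in each surviving case one can exhibit three odd neighbours of a suitable vertex on a suitable odd cycle of $G$, thereby producing a loose odd wheel per the definition in Section~\ref{sec:t-perfection}. The natural candidate for the hub is one of $x,b,b'$, and the natural odd cycle is either $C$ itself or an odd cycle obtained by rerouting $C$ through the quadrilateral $a,b,a',b'$ around $x$. Concretely, if $C$ passes through $b$ and $b'$ but not $x$, then collapsing the subpath between them is what could change parity; I would reroute through $x$ (using the edges $xb,xb'$) and check that $x$ then has three neighbours on an odd cycle whose three connecting arcs are all odd, invoking Observation~\ref{obs:induced_cycles} to guarantee inducedness of the relevant arcs. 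Observation~\ref{obs:deg4->-loose_odd_wheel} may also shortcut the case where one of the hole vertices has degree $4$.

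The hard part will be the bookkeeping of parities and inducedness across the contraction: verifying that the rerouted cycle is genuinely an \emph{induced} odd cycle of $G$ (not merely a closed odd walk) and that the three arcs cut out by the chosen neighbours all have odd length simultaneously. The constraint $N_G(b)\cap N_G(b')=\{a,a',x\}$ is what rules out unwanted chords between the two sides of the quadrilateral, and the niceness of the triangulation (via Observation~\ref{obs:induced_cycles}) is what keeps the rerouted arcs chordless; I expect the technical core of the argument to consist of confirming, case by case, that these two facts together force exactly the three-odd-neighbour configuration. A secondary subtlety is ensuring we have not silently assumed $G$ itself is perfect — we only know $G'$ is — so every odd cycle we build in $G$ must be constructed explicitly rather than inferred from the Strong Perfect Graph Theorem applied to $G$.
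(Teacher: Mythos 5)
Your setup matches the paper's: associate the contraction map, observe that an odd hole of $G$ disjoint from $\{x,b,b'\}$ (or meeting $\{a,b,a',b'\}$ only in $\{a,a'\}$) survives as an odd hole of $G'$, and split into cases according to $\{x,a,a',b,b'\}\cap V(C)$. The easy cases also go through as you expect: $x\in V(C)$ is killed by Observation~\ref{obs:deg4->-loose_odd_wheel}; if three of $a,b,a',b'$ lie on $C$ they must be consecutive (since $C$ is induced), so $x$ has three odd neighbours on $C$; and if $\{a,a',b,b'\}\cap V(C)=\{b,b'\}$ the image splits into an odd and an even cycle through $y$, both automatically induced because any $G$-neighbour of $b$ or $b'$ on the induced cycle $C$ is already a $C$-neighbour, and $x$ has no other neighbours --- note this case, the one you single out as the crux, is in fact one of the short ones.

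The genuine gap is in the cases your proposal leaves essentially untouched: $\{x,a,a',b,b'\}\cap V(C)$ equal to $\{b'\}$ or to $\{a',b'\}$. There $\gamma(C)$ is an odd cycle of the \emph{same} length, but it acquires chords through $y$, coming from edges of $G$ between $b$ (which is \emph{not} on $C$) and vertices of $C$; perfection of $G'$ then only tells you that every induced odd subcycle of $\gamma(C)$ is a triangle. Converting that weak information into a loose odd wheel of $G$ is the technical core of the paper (its Lemmas~\ref{lem:a'b'} and~\ref{lem:b'}, roughly six pages): one must classify which triangles can occur (the paper's Cases A, B, C), build odd cycles not from $C$ and the quadrilateral around $x$ but from paths along the Hamilton cycles $HC(b')$, $HC(w)$, $HC(v)$, $HC(z)$ of several vertices, distinguish the topological embeddings of the chords (contractible versus non-contractible cycles, Figures~\ref{fig:Haus_vom_Nikolaus} and~\ref{fig:bprime}), and even invoke $3$-colourability (Observation~\ref{obs:3colours}) and Theorem~\ref{thm:cycles_triang} for parity control. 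In particular, the eventual wheel centers are frequently vertices like $a$, $w$, $z$ or $\tilde v$, not one of $x,b,b'$ as your plan predicts, and the relevant odd cycles are not reroutings of $C$ through the quadrilateral; the chord-freeness you hope to get from~\eqref{eq:neighbours_of_b_b'} and Observation~\ref{obs:induced_cycles} alone is nowhere near sufficient, which is why the paper's argument iterates chord analysis through a cascade of auxiliary cycles. So the proposal is a correct skeleton of the easy part of the proof, but it is missing the idea needed exactly where the lemma is hard.
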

The proof of this lemma can be found in~Section~\ref{sec:even-contraction}. The proofs of the following two lemmas appear in Section~\ref{sec:even-contraction-low-barC7}.

\begin{lemma} \label{lem:even-split_low}
Let $G$ be a nice Eulerian triangulation and let $G'$ be obtained from $G$ by an even-contraction. If $G'$ contains a loose odd wheel, then $G$ contains a loose odd wheel.
\end{lemma}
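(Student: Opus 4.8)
## Proof Plan for Lemma~\ref{lem:even-split_low}

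The plan is to track a loose odd wheel in $G'$ back through the even-splitting to produce a loose odd wheel in $G$. Recall that the even-contraction identifies three vertices $x, b, b'$ of $G$ into a single vertex $y$ of $G'$, where $x$ has Hamilton cycle $a, b, a', b', a$ and $N_G(b) \cap N_G(b') = \{a, a', x\}$. Let $G'$ contain a loose odd wheel consisting of an odd cycle $C'$ and a hub $h'$ with three odd neighbours $v_1, v_2, v_3$ on $C'$. The first step is a case distinction according to whether the special vertex $y$ plays the role of the hub, a spoke-endpoint (one of the $v_i$), an ordinary cycle vertex, or no role at all. If $y$ is not involved at all, or if it lies on $C'$ or is the hub but its incident structure in $G'$ lifts cleanly, then the same vertex set (with $y$ replaced by the appropriate one of $x, b, b'$) should already be a loose odd wheel in $G$, since all edges of $G'$ not incident to $y$ are edges of $G$.

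The substantive work is in lifting edges incident to $y$. Every neighbour of $y$ in $G'$ is a neighbour of at least one of $x, b, b'$ in $G$, and by~\eqref{eq:neighbours_of_b_b'} only $a, a'$ are common to $b$ and $b'$. So when $y$ appears in the loose odd wheel, I would split the analysis by which of $x$, $b$, or $b'$ furnishes each relevant edge. First I would handle the case where $y$ is the hub $h'$: then $v_1, v_2, v_3$ are neighbours of $y$ in $G'$, hence each is a neighbour of one of $x, b, b'$ in $G$. If all three attach to the same vertex among $x, b, b'$, that vertex is immediately a hub with the same three odd neighbours on $C'$ (which survives as a cycle in $G$ provided $C'$ avoids $y$, i.e.\ $C'$ uses $y$ only as a non-cycle vertex — but $y$ is the hub here, so $C' \subseteq V(G)$ directly). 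If the three spoke-endpoints are distributed among two or three of $x, b, b'$, I would instead use the rich local structure: $x, b, b', a, a'$ form an octahedron-like configuration, and I expect to relocate the hub to $x$ (which is adjacent to all of $b, b', a, a'$) or to reroute one spoke through the edge $ba$ or $b'a$ to restore three odd paths.

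The harder case is when $y$ lies on the cycle $C'$, because then the cycle must be rebuilt in $G$: the two edges of $C'$ at $y$ become edges at some subset of $\{x, b, b'\}$, and if they attach to two different vertices (say one edge to $b$ and one to $b'$) the cycle $C'$ is no longer closed in $G$ and must be repaired by inserting a short path through $x$ (using $bx, xb'$) or through $a$ or $a'$. Here I must control the parity of the three subpaths $\subpath{C'}{v_i}{v_j}{v_k}$ after insertion: replacing the vertex $y$ by a path of length two (e.g.\ $b, x, b'$) changes the length of $C'$ by one, which would flip parities. I therefore expect to need a parity-correcting observation — likely that one can instead route through a single intermediate vertex, or absorb the parity change by also moving the hub — and verifying that such a reroute always exists while keeping the three spoke-neighbours intact is the main obstacle. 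Throughout I would lean on the fact that $G$ is a nice Eulerian triangulation (so local neighbourhoods are Hamilton cycles by Theorem~\ref{thm:locally_Hamiltonian}) and on~\eqref{eq:b_b'_not_adjacent} to exclude the degenerate configuration $bb' \in E$, which would otherwise create a spurious chord. The payoff is that in every case the reconstructed cycle together with its hub has three odd spokes, giving the desired loose odd wheel in $G$.
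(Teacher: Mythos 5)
Your top-level case split (the contracted vertex $y$ is absent, lies on the rim cycle, or is the hub) matches the paper's, and the easy subcases you settle --- $y$ absent, or all three spoke-endpoints attached to the same vertex of $\{b,b'\}$ --- are handled exactly as in the paper. But both hard subcases remain unresolved in your plan, and one of them rests on a miscalculation. In the rim case, the parity obstacle you identify does not exist: replacing the vertex $y$ by the path $b,x,b'$ replaces the two cycle edges $uy,yu'$ by the four edges $ub,bx,xb',b'u'$, so the cycle length changes by \emph{two}, not one, and the lifted cycle is automatically odd in both lifting scenarios ($y\to b$ and $y\to b,x,b'$). Your proposed ``parity-correcting'' reroutes are fixes for a non-problem, and meanwhile the genuine difficulty of this case is never addressed: after lifting, the hub $v$ may lose its three odd neighbours (the edge $vy$ of $G'$ may lift to $vb'$ while the cycle was rerouted through $b$), and chords from $x$ to $a$ or $a'$ may destroy inducedness. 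The paper's resolution is not a reroute but Observation~\ref{obs:deg4->-loose_odd_wheel}: whenever the lifted structure fails to be a \low directly, one exhibits an odd hole through the degree-$4$ vertex $x$, and that observation then forces a \low in $G$. This tool, used repeatedly, is the missing idea in your proposal.

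In the hub case with spoke-endpoints split between $b$ and $b'$ (say $bz_1,bz_2,b'z_3\in E(G)$), your suggestions --- relocating the hub to $x$, or rerouting a spoke through $ba$ or $b'a$ --- cannot work as stated: by \eqref{eq:neighbours_of_b_b'} the vertex $x$ is adjacent only to $a,a',b,b'$, so in general it is adjacent to none of $z_1,z_2,z_3$ and cannot serve as a hub for $C$; and an edge $ba$ does not by itself produce three odd paths. The paper instead forms the two odd cycles consisting of $b,x,b'$ together with the arcs $\subpath{C}{z_2}{z_3}{z_1}$ and $\subpath{C}{z_1}{z_3}{z_2}$ (both are odd since each arc is odd and four edges are added). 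Either one of these cycles is induced --- and then Observation~\ref{obs:deg4->-loose_odd_wheel} applies via $x$ --- or every chord through $x$ must be $xa$ or $xa'$, and a final parity dichotomy on the arcs determined by $a,a'$ and the $z_i$ shows that $b$ or $b'$ itself has three odd neighbours on $C$. This chord analysis, together with the degree-$4$ observation, constitutes the substance of the proof; without them your proposal establishes only the trivial cases.
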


\begin{lemma} \label{lem:even-split_anti-hole}
Let $G$ be a nice Eulerian triangulation and let $G'$ be obtained from $G$ by an even-contraction. If $G'$ contains an induced $\overline{C_7}$, then $G$ contains a $\overline{C_7}$ or a loose odd wheel as an induced subgraph.
\end{lemma}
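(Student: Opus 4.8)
The plan is to understand how an even-contraction relates the graphs $G$ and $G'$, and then to pull back the induced $\overline{C_7}$ from $G'$ to a forbidden structure in $G$. Recall from Definition~\ref{def:operations} that $G'$ arises by identifying three vertices $x, b, b'$ of $G$ (where $x$ has Hamilton cycle $a, b, a', b', a$ and $N_G(b) \cap N_G(b') = \lbrace a, a', x \rbrace$) into a single vertex $y$. Conversely, $G$ is recovered from $G'$ by an even-splitting at $y$: the neighbourhood of $y$ in $G'$ is partitioned into the neighbours of $b$ and the neighbours of $b'$, with $a$ and $a'$ being the two shared ``boundary'' vertices and $x$ reinserted between $b$ and $b'$. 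So the first step is to write down exactly which vertex of $G'$ corresponds to which vertices of $G$, paying attention to the only vertex that gets split, namely $y$.

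The key case distinction is whether the induced $\overline{C_7}$ in $G'$ uses the vertex $y$ or not. If the $\overline{C_7}$ avoids $y$, then all seven of its vertices and all adjacencies among them are unchanged by the splitting operation (splitting only alters edges incident to $y$), so the same seven vertices induce a $\overline{C_7}$ in $G$ and we are immediately done. The substantive case is when $y$ is one of the seven vertices of the $\overline{C_7}$. Here I would replace $y$ by one of its preimages. Note that in $\overline{C_7}$ every vertex has degree $4$; so $y$ has exactly four neighbours among the other six vertices of the anti-hole, and exactly two non-neighbours. The splitting redistributes these four neighbours between $b$, $b'$, $a$, $a'$, and $x$ in $G$, and the main task is to control which preimage inherits enough of these adjacencies.

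The central idea is to try each of the preimages $b$, $b'$ (and possibly $x$, $a$, $a'$) as a substitute for $y$ and argue that at least one choice reconstitutes a $\overline{C_7}$, and that if none does, the failure itself forces a \low in $G$. Concretely, the four neighbours of $y$ in the anti-hole split across the two sides of the Hamilton cycle $a, b, a', b', a$; because $a$ and $a'$ separate the $b$-side neighbours from the $b'$-side neighbours, the four anti-hole-neighbours of $y$ fall into three groups: those adjacent (in $G$) to $b$, those adjacent to $b'$, and the boundary vertices $a, a'$ themselves (which may or may not lie in the anti-hole). If all four neighbours end up adjacent to a single preimage, that preimage together with the two non-neighbours (which are unaffected) recreates the $\overline{C_7}$. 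The hard part, and the place where a \low must be extracted, is when the four neighbours are genuinely split so that neither $b$ nor $b'$ alone works: then the six remaining anti-hole vertices, together with the short Hamilton path through $b, x, b'$ and the constraint~\eqref{eq:neighbours_of_b_b'} on common neighbours, should produce an odd cycle on which $x$ (or $a$, $a'$) has three odd neighbours.

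The main obstacle I anticipate is the bookkeeping in this split case: the odd anti-hole $\overline{C_7}$ is a dense, highly symmetric graph, and one must verify that the particular way the four neighbours of $y$ distribute across the even-splitting is compatible with the edge pattern of $\overline{C_7}$, then translate the ``bad'' distributions into an explicit odd cycle with three odd neighbours of a common vertex. I expect to lean heavily on the structural restriction $N_G(b) \cap N_G(b') = \lbrace a, a', x \rbrace$ and on the fact (Observation~\ref{obs:induced_cycles}) that consecutive-vertex Hamilton paths around a vertex are induced, in order to rule out the distributions that would otherwise prevent reconstruction and to certify oddness of the three relevant subpaths. The likely outcome is a small finite analysis — a handful of subcases indexed by how many of $\lbrace a, a'\rbrace$ belong to the anti-hole and how the remaining neighbours split — each ending either in a recovered $\overline{C_7}$ or an exhibited \low.
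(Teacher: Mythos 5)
Your skeleton coincides with the paper's: split on whether the new vertex $y$ lies on the $\overline{C_7}$; if not, the same seven vertices induce a $\overline{C_7}$ in $G$; if so, study how the four anti-hole neighbours of $y$ (in the paper's labelling $u_1,u_2,v_1,v_2$) distribute over $b$ and $b'$, using~\eqref{eq:neighbours_of_b_b'} to confine common neighbours to $\{a,a',x\}$. Your remark that sending all four neighbours to one side re-creates an induced $\overline{C_7}$ is precisely the paper's observation~\eqref{eq:4neighbours}. Up to this point the proposal is sound.

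The genuine gap is the split case, which is the entire substance of the lemma: you only promise the analysis (``should produce'', ``I expect \ldots a small finite analysis'') rather than carry it out, and the one mechanism you commit to is too narrow to complete it. You predict that the \low will always be an odd cycle through $b,x,b'$ on which $x$, $a$ or $a'$ has three odd neighbours. That pattern does yield the paper's observations~\eqref{eq:u1_in_N(b)_and_u2_in_N(b')} and~\eqref{eq:u1_in_N(b)_and_v1_in_N(b')} (the induced $5$-cycle through $b,u_1,u_2,b',x$, which forms a \low with center $a$), but it fails for the two remaining configurations. When $\{u_1,v_2\}=\{a,a'\}$, the paper must take the $5$-cycle $u_1,x,v_2,v_3,u_2$ whose \low center is $b$ --- a split vertex, not one of $x,a,a'$ (observation~\eqref{eq:u1v2_and_aa'}). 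And when $u_1,v_2\in N_G(b')$ but $u_2\notin N_G(b')$, with $u_1\in\{a,a'\}$, the candidate cycles through $b,x,b'$ acquire chords at $u_1$ (which is then adjacent to $b$, $b'$ and $x$); the paper instead uses the $5$-cycle $b',u_1,u_2,v_3,v_2$, which avoids $x$ entirely and forms a \low with center $u_3$, a vertex of the anti-hole itself (observation~\eqref{eq:u1v2_in_N(b')}). Finally, even granted these local facts, the endgame is not a one-shot check per distribution: the paper closes with an interlocking contradiction that chains~\eqref{eq:u1v2_in_N(b')}, \eqref{eq:4neighbours}, \eqref{eq:u1_in_N(b)_and_u2_in_N(b')}, \eqref{eq:u1_in_N(b)_and_v1_in_N(b')} and~\eqref{eq:u1v2_and_aa'} together with their $u\leftrightarrow v$ and $b\leftrightarrow b'$ symmetric forms. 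None of this decisive content --- which odd cycles to take, and which vertices witness the \low --- is recoverable from the proposal as written.
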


Now, we are able to prove the main theorem.

\begin{proof} [Proof of Theorem~\ref{thm:mainthm_4_equivalences}]

If a graph $G$ is perfect without $K_4$, then $G$ is strongly $t$-perfect and thus $t$-perfect by Lemma~\ref{lem:perfect_no_K4->t-perfect}. Therefore,~\eqref{item:perfect_without_K4} implies~\eqref{item:strongly_t-perfect} and~\eqref{item:strongly_t-perfect} implies~\eqref{item:t-perfect}.

If a graph $G$ is $t$-perfect, then $G$ contains no \low or $\overline{C_7}$; see~\eqref{eq:low->t-imp} and~\eqref{eq:7-antihole_t-imp}. Thus,~\eqref{item:t-perfect} implies~\eqref{item:no_low_C7bar}.

In order to show that~\eqref{item:no_low_C7bar} implies~\eqref{item:perfect_without_K4},
we now consider an imperfect triangulation $G$ of the projective plane and prove that $G$ contains $\overline{C_7}$ or a \low as an induced subgraph. Note that $K_4$ is a \low.

We can assume that $G$ is a nice triangulation (see~\eqref{eq:nice_suffices}). 
Further, as $G$ is imperfect, $G$ contains an odd hole or an odd anti-hole (Theorem~\ref{thm:SPGT}).
The Euler characteristic of the projective plane implies that $2|E(H)|\leq 6|V(H)| -6$ for every embeddable graph $H$. Thus, no odd anti-hole on nine or more vertices is embeddable.
As the anti-hole $\overline{C_5}$ is isomorphic to the hole $ C_5$, 
an imperfect graph in the projective plane contains an odd hole or the anti-hole $\overline{C_7}$.
If $G$ contains an anti-hole $\overline{C_7}$, $G$ is $t$-imperfect by~\eqref{eq:7-antihole_t-imp}.
Thus, suppose that $G$ contains an induced odd hole $C$.
If the odd cycle $C$ is contractible, $G$ contains a \low by~Theorem~\ref{thm:cycles_triang}. 
As $G$ is a nice triangulation,  no deletion of an octahedron can be applied.

If no even-contraction can be applied to $G$, then the graph is irreducible and the claim follows from Lemma~\ref{lem:irreducible}.
Assume that an even-contraction can be applied. 
If $G'$ is perfect, $G$ contains a \low by Lemma~\ref{lem:even-contraction}. 
If the obtained graph $G'$ contains an induced $\overline{C_7}$, then by Lemma~\ref{lem:even-split_anti-hole}, $G$ contains a loose odd wheel or an induced $\overline{C_7}$. 
If $G'$ contains an odd hole then by induction, $G'$ contains a loose odd wheel. Thus, $G$ also contains a \low; see Lemma~\ref{lem:even-split_low}. 
\end{proof}

\section{Irreducible Triangulations}\label{sec:irreducible}
In this section, we analyse irreducible Eulerian triangulations of the projective plane, ie Eulerian triangulations to which no deletion of an octahedron and no even-contraction can be applied. 

Following Suzuki and Watanabe~\cite{SuWa07} we define three families of graphs.

\begin{figure}[htb]
\begin{center}
\begin{tikzpicture}[scale = .17]

\begin{scope}[scale = .9]
\node[novertex] (x0) at (0,-11){$\mathbf{D}$};

\node[hvertex] (x1) at (0,-7){};
\node[novertex] (lx1) at (0,8.5){$v$};
\node[hvertex] (x2) at (0,-4){};
\node[novertex] (lx2) at (0,-2.6){$d$};
\node[hvertex] (x3) at (0,4){};
\node[novertex] (lx3) at (0,2.6){$b$};
\node[hvertex] (x4) at (0,7){};
\node[novertex] (lx4) at (0,-8.5){$v$};

\node[hvertex] (x5) at (-8,-4){};
\node[novertex] (lx5) at (-9.4,-4){$c$};
\node[hvertex] (x6) at (-8,4){};
\node[novertex] (lx5) at (-9.4,4){$a$};

\node[hvertex] (x7) at (8,-4){};
\node[novertex] (lx5) at (9.4,-4){$a$};
\node[hvertex] (x8) at (8,4){};
\node[novertex] (lx5) at (9.4,4){$c$};

\node[hvertex] (x11) at (-8,0){};
\node[novertex] (lx11) at (-9.4,0){$e_1$};

\node[hvertex] (x13) at (8,0){};
\node[novertex] (lx13) at (9.4,0){$e_2$};

\draw[hedge] (x1) -- (x2);
\draw[hedge] (x3) -- (x4);

\draw[hedge] (x5) -- (x11);
\draw[hedge] (x11) -- (x6);

\draw[hedge] (x7) -- (x13);
\draw[hedge] (x13) -- (x8);

\draw[hedge] (x5) -- (x1);
\draw[hedge] (x1) -- (x7);

\draw[hedge] (x5) -- (x2);
\draw[hedge] (x2) -- (x7);

\draw[hedge] (x6) -- (x3);
\draw[hedge] (x3) -- (x8);

\draw[hedge] (x6) -- (x4);
\draw[hedge] (x4) -- (x8);

\draw[hedge] (x11) -- (x2);
\draw[hedge] (x11) -- (x3);
\end{scope}

\begin{scope} [shift={(23,0)}, scale=0.9]
\node[novertex] (x0) at (0,-11){$\mathbf{E}$};

\node[hvertex] (x1) at (-2,-7){};
\node[novertex] (lx3) at (-2,-8.5){$y$};
\node[hvertex] (x2) at (2,-7){};
\node[novertex] (lx3) at (2,-8.5){$x$};

\node[hvertex] (x3) at (-2,7){};
\node[novertex] (lx3) at (-2,8){$x$};
\node[hvertex] (x4) at (2,7){};
\node[novertex] (lx4) at (2,8){$y$};

\node[hvertex] (x5) at (-8,-4){};
\node[novertex] (lx5) at (-9.5,-4){$c_1$};
\node[hvertex] (x6) at (-8,4){};
\node[novertex] (lx6) at (-9.5,4){$a_1$};
\node[hvertex] (x7) at (8,-4){};
\node[novertex] (lx7) at (9.5,-4){$a_2$};
\node[hvertex] (x8) at (8,4){};
\node[novertex] (lx8) at (9.5,4){$c_2$};

\node[hvertex] (x9) at (0,-4){};
\node[novertex] (lx9) at (0,-2.5){$d$};

\node[hvertex] (x10) at (0,4){};
\node[novertex] (lx10) at (0,2.5){$b$};

\draw[hedge] (x1) -- (x2);
\draw[hedge] (x2) -- (x7);
\draw[hedge] (x7) -- (x8);
\draw[hedge] (x8) -- (x4);
\draw[hedge] (x4) -- (x3);
\draw[hedge] (x3) -- (x6);
\draw[hedge] (x6) -- (x5);
\draw[hedge] (x5) -- (x1);

\draw[hedge] (x1) -- (x9);
\draw[hedge] (x2) -- (x9);
\draw[hedge] (x5) -- (x9);
\draw[hedge] (x7) -- (x9);

\draw[hedge] (x3) -- (x10);
\draw[hedge] (x4) -- (x10);
\draw[hedge] (x6) -- (x10);
\draw[hedge] (x8) -- (x10);

\end{scope}

\begin{scope}[shift={(46,7)}]
\node[novertex] (a0) at (5,-3){$\mathbf{h_1}$};
\node[hvertex] (a1) at (-9,0){};
\node[novertex] (la1) at (-10.5,0){$e_1$};
\node[hvertex] (a2) at (-4.5,1.5){};
\node[hvertex] (a3) at (0,1.5){};
\node[hvertex] (a4) at (4.5,0){};
\node[novertex] (la4) at (6,0){$e_2$};
\node[hvertex] (a5) at (-4.5,-1.5){};
\node[hvertex] (a6) at (0,-1.5){};

\node[hvertex] (a7) at (-2.25,0){};

\draw[hedge] (a1) -- (a2);
\draw[hedge] (a1) -- (a5);
\draw[dedge] (a2) -- (a5);
\draw[hedge] (a2) -- (a3);
\draw[hedge] (a5) -- (a6);
\draw[hedge] (a3) -- (a6);
\draw[hedge] (a4) -- (a6);
\draw[hedge] (a4) -- (a3);
\draw[hedge] (a2) -- (a7);
\draw[hedge] (a3) -- (a7);
\draw[hedge] (a5) -- (a7);
\draw[hedge] (a6) -- (a7);

\end{scope}

\begin{scope}[shift={(46,0)}]
\node[novertex] (y0) at (5,-3){$\mathbf{h_2}$};
\node[hvertex] (y1) at (0,0){};
\node[hvertex] (y2) at (4.5,0){};
\node[novertex] (ly2) at (6,0){$e_2$};
\node[hvertex] (y3) at (-3,0){};
\node[hvertex] (y4) at (-6,0){};
\node[hvertex] (y5) at (-9,0){};
\node[novertex] (ly5) at (-10.5,0){$e_1$};

\node[hvertex] (y6) at (-4.5,2){};
\node[hvertex] (y7) at (-4.5,-2){};

\draw[hedge] (y1) -- (y2);
\draw[hedge] (y1) -- (y3);
\draw[hedge] (y3) -- (y4);
\draw[hedge] (y4) -- (y5);

\draw[hedge] (y5) -- (y6);
\draw[hedge] (y5) -- (y7);
\draw[dedge] (y4) -- (y6);
\draw[dedge] (y4) -- (y7);
\draw[hedge] (y3) -- (y6);
\draw[hedge] (y3) -- (y7);
\draw[hedge] (y1) -- (y6);
\draw[hedge] (y1) -- (y7);

\end{scope}

\begin{scope}[shift={(46,-7)}]
\node[novertex] (z0) at (5,-3){$\mathbf{h_3}$};
\node[hvertex] (z1) at (-9,0){};
\node[novertex] (lz1) at (-10.5,0){$e_1$};
\node[hvertex] (z2) at (-4.5,0){};
\node[hvertex] (z3) at (-1.5,0){};
\node[hvertex] (z4) at (1.5,0){};
\node[hvertex] (z5) at (4.5,0){};
\node[novertex] (lz5) at (6,0){$e_2$};

\node[hvertex] (z6) at (0,2){};
\node[hvertex] (z7) at (0,-2){};

\draw[hedge] (z1) -- (z2);
\draw[hedge] (z2) -- (z3);
\draw[hedge] (z3) -- (z4);
\draw[hedge] (z4) -- (z5);

\draw[hedge] (z5) -- (z6);
\draw[hedge] (z5) -- (z7);
\draw[hedge] (z4) -- (z6);
\draw[hedge] (z4) -- (z7);
\draw[hedge] (z3) -- (z6);
\draw[hedge] (z3) -- (z7);
\draw[hedge] (z2) -- (z6);
\draw[hedge] (z2) -- (z7);

\end{scope}

\end{tikzpicture}

\end{center}

\caption{The pieces $D$, $E$, $h_1$, $h_2$ and $h_3$}
\label{fig:Def_I_16}
\end{figure}
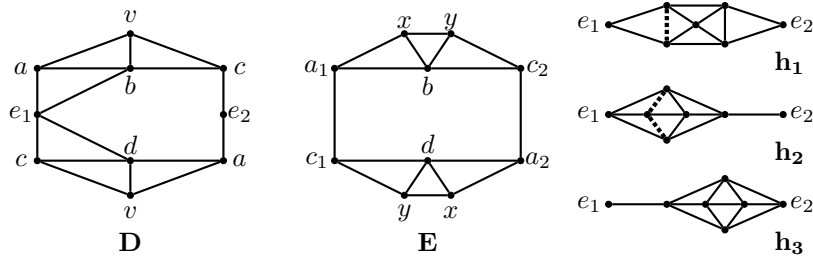
The graph $I_{16}[s_1, \ldots, s_n]$ with $s_i \in \{1,2,3\}$ for $i \in [n]$ is obtained from the graph $D$ by inserting the \emph{pieces} $h_1$, $h_2$ and $h_3$ (see Figure~\ref{fig:Def_I_16}) into the hexagonal region of $D$ as follows: Insert $h_{s_1},\ldots, h_{s_n}$ one below the other into the hexagon $e_1bce_2ad$ (with $e_1=e_2$). Then, identify the paths between $e_1$ and $e_2$ in each pair of consecutive pieces. 
Further, identify the path $e_1bce_2$ in $D$ with the path connecting $e_1$ and $e_2$ in $h_{s_1}$ that has not been connected to another piece. Analogously, identify $e_1,d,a,e_2$ in $D$ with the path in $h_{s_n}$ that has not been connected to another piece. 
Figure~\ref{fig:I_16_18_19_def} shows $I_{16}[1,2,3]$ as an example where the identified paths are dotted. The graph $I_1$ in Figure~\ref{fig:irred_all} is isomorphic to $I_{16} [1]$. 
\begin{figure}[htb]
\begin{center}
\begin{tikzpicture}[scale = .19]

\begin{scope}[shift={(0,0)}]

\node[novertex] (x0) at (0,-10){$\mathbf{I_{16}[s_1, \ldots, s_n]}$};

\node[hvertex] (x1) at (0,-7){};
\node[hvertex] (x2) at (0,-4){};
\node[hvertex] (x3) at (0,4){};
\node[hvertex] (x4) at (0,7){};

\node[hvertex] (x5) at (-8,-4){};
\node[hvertex] (x6) at (-8,4){};

\node[hvertex] (x7) at (8,-4){};
\node[hvertex] (x8) at (8,4){};

\node[hvertex] (x9) at (-2,-1.2){};

\node[hvertex] (x11) at (-8,0){};

\node[hvertex] (x10) at (0,-1.5){};
\node[hvertex] (x12) at (0,1.5){};
\node[hvertex] (x13) at (8,0){};

\node[hvertex] (x14) at (4,-2){};
\node[hvertex] (x15) at (4,0){};
\node[hvertex] (x16) at (4,2){};

\node[hvertex] (x17) at (6,-1){};

\draw[hedge] (x1) -- (x2);
\draw[hedge] (x2) -- (x12);
\draw[hedge] (x10) -- (x12);
\draw[hedge] (x12) -- (x3);
\draw[hedge] (x3) -- (x4);

\draw[hedge] (x5) -- (x11);
\draw[hedge] (x11) -- (x6);

\draw[dedge] (x7) -- (x13);
\draw[dedge] (x13) -- (x8);

\draw[hedge] (x5) -- (x1);
\draw[hedge] (x1) -- (x7);

\draw[hedge] (x5) -- (x2);
\draw[dedge] (x2) -- (x7);

\draw[hedge] (x6) -- (x3);
\draw[dedge] (x3) -- (x8);

\draw[hedge] (x6) -- (x4);
\draw[hedge] (x4) -- (x8);

\draw[hedge] (x9) -- (x2);
\draw[hedge] (x9) -- (x10);
\draw[hedge] (x9) -- (x12);
\draw[hedge] (x9) -- (x11);

\draw[dedge] (x11) -- (x2);
\draw[hedge] (x11) -- (x9);
\draw[dedge] (x11) -- (x12);
\draw[dedge] (x11) -- (x3);

\draw[hedge] (x14) -- (x2);
\draw[hedge] (x14) -- (x7);
\draw[hedge] (x14) -- (x17);
\draw[hedge] (x14) -- (x15);

\draw[dedge] (x15) -- (x2);
\draw[hedge] (x15) -- (x10);
\draw[dedge] (x15) -- (x12);
\draw[hedge] (x15) -- (x16);
\draw[hedge] (x15) -- (x8);
\draw[dedge] (x15) -- (x13);
\draw[hedge] (x15) -- (x17);

\draw[hedge] (x16) -- (x8);
\draw[hedge] (x16) -- (x3);
\draw[hedge] (x16) -- (x12);

\draw[hedge] (x17) -- (x7);
\draw[hedge] (x17) -- (x13);

\end{scope}

\begin{scope}[shift={(20,0)}]

\node[novertex] (x0) at (0,-10){$\mathbf{I_{18}[n] (n \geq 1)}$};

\node[hvertex] (x1) at (-2,-7){};
\node[hvertex] (x2) at (2,-7){};

\node[hvertex] (x3) at (-2,7){};
\node[hvertex] (x4) at (2,7){};

\node[hvertex] (x5) at (-8,-4){};
\node[hvertex] (x6) at (-8,4){};

\node[hvertex] (x7) at (8,-4){};
\node[hvertex] (x8) at (8,4){};

\node[hvertex] (x9) at (0,-4){};

\node[hvertex] (x10) at (0,4){};

\node[hvertex] (x11) at (-5.4,-1.3){};
\node[hvertex] (x12) at (-2.7,1.3){};

\node[hvertex] (x14) at (2.7,-1.3){};
\node[hvertex] (x13) at (5.4,1.3){};

\draw[hedge] (x1) -- (x2);
\draw[hedge] (x2) -- (x7);
\draw[hedge] (x7) -- (x8);
\draw[hedge] (x8) -- (x4);
\draw[hedge] (x4) -- (x3);
\draw[hedge] (x3) -- (x6);
\draw[hedge] (x6) -- (x5);
\draw[hedge] (x5) -- (x1);

\draw[hedge] (x1) -- (x9);
\draw[hedge] (x2) -- (x9);
\draw[hedge] (x5) -- (x9);
\draw[hedge] (x7) -- (x9);
\draw[hedge] (x9) -- (x10);

\draw[hedge] (x3) -- (x10);
\draw[hedge] (x4) -- (x10);
\draw[hedge] (x6) -- (x10);
\draw[hedge] (x8) -- (x10);

\draw[hedge] (x11) -- (x12);
\draw[hedge] (x11) -- (x5);
\draw[hedge] (x11) -- (x9);
\draw[hedge] (x11) -- (x6);

\draw[hedge] (x12) -- (x10);
\draw[hedge] (x12) -- (x9);
\draw[hedge] (x12) -- (x6);

\draw[hedge] (x13) -- (x14);
\draw[hedge] (x13) -- (x7);
\draw[hedge] (x13) -- (x9);
\draw[hedge] (x13) -- (x10);

\draw[hedge] (x14) -- (x7);
\draw[hedge] (x14) -- (x8);
\draw[hedge] (x14) -- (x10);

\end{scope}

\begin{scope}[shift={(40,0)}]

\node[novertex] (x0) at (0,-10){$\mathbf{I_{19}[m](m \geq 1)}$};

\node[hvertex] (x1) at (-2,-7){};
\node[hvertex] (x2) at (2,-7){};

\node[hvertex] (x3) at (-2,7){};
\node[hvertex] (x4) at (2,7){};

\node[hvertex] (x5) at (-8,-4){};
\node[hvertex] (x6) at (-8,4){};

\node[hvertex] (x7) at (8,-4){};
\node[hvertex] (x8) at (8,4){};

\node[hvertex] (x9) at (0,-4){};
\node[hvertex] (x10) at (0,4){};

\node[hvertex] (x11) at (0,-2.5){};
\node[hvertex] (x12) at (0,2.5){};

\node[hvertex] (x13) at (-6,3){};
\node[hvertex] (x14) at (6,-3){};

\node[hvertex] (x15) at (-1.3,0){};
\node[hvertex] (x16) at (1.3,0){};

\node[hvertex] (x17) at (-3.6,1.5){};
\node[hvertex] (x18) at (3.6,-1.5){};

\node[hvertex] (x19) at (-5,0){};
\node[hvertex] (x20) at (5,0){};

\draw[hedge] (x1) -- (x2);
\draw[hedge] (x2) -- (x7);
\draw[hedge] (x7) -- (x8);
\draw[hedge] (x8) -- (x4);
\draw[hedge] (x4) -- (x3);
\draw[hedge] (x3) -- (x6);
\draw[hedge] (x6) -- (x5);
\draw[hedge] (x5) -- (x1);

\draw[hedge] (x1) -- (x9);
\draw[hedge] (x2) -- (x9);
\draw[hedge] (x5) -- (x9);
\draw[hedge] (x7) -- (x9);

\draw[hedge] (x3) -- (x10);
\draw[hedge] (x4) -- (x10);
\draw[hedge] (x6) -- (x10);
\draw[hedge] (x8) -- (x10);

\draw[hedge] (x9) -- (x11);
\draw[hedge] (x10) -- (x12);

\draw[hedge] (x13) -- (x6);
\draw[hedge] (x13) -- (x10);
\draw[hedge] (x13) -- (x5);
\draw[hedge] (x13) -- (x12);

\draw[hedge] (x8) -- (x12);
\draw[hedge] (x5) -- (x11);

\draw[hedge] (x14) -- (x9);
\draw[hedge] (x14) -- (x11);
\draw[hedge] (x14) -- (x7);
\draw[hedge] (x14) -- (x8);

\draw[hedge] (x12) -- (x15);
\draw[hedge] (x15) -- (x11);
\draw[hedge] (x11) -- (x16);
\draw[hedge] (x16) -- (x12);
\draw[hedge] (x16) -- (x15);

\draw[hedge] (x15) -- (x5);
\draw[hedge] (x16) -- (x8);

\draw[hedge] (x13) -- (x17);
\draw[hedge] (x17) -- (x15);

\draw[hedge] (x14) -- (x18);
\draw[hedge] (x18) -- (x16);

\draw[hedge] (x11) -- (x18);
\draw[hedge] (x12) -- (x17);

\draw[hedge] (x19) -- (x5);
\draw[hedge] (x19) -- (x15);
\draw[hedge] (x19) -- (x17);
\draw[hedge] (x19) -- (x13);

\draw[hedge] (x20) -- (x18);
\draw[hedge] (x20) -- (x14);
\draw[hedge] (x20) -- (x8);
\draw[hedge] (x20) -- (x16);

\end{scope}
\end{tikzpicture}
\end{center}
\caption{The three infinite families of irreducible graphs. Opposite points on the outer cycles are identified.} 

\label{fig:I_16_18_19_def}
\end{figure}
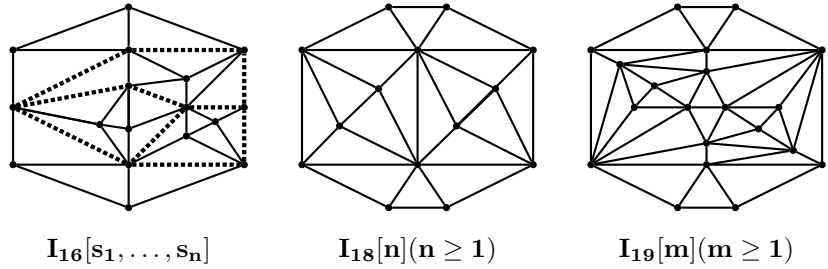 

The graph $I_{18}[n]$, with $n \in \N$, is obtained from the graph $E$ in Figure~\ref{fig:Def_I_16} by inserting the pieces $h_2$ and $h_3$ of Figure~\ref{fig:Def_I_16} one below the other a total of $n$ times alternatingly into the hexagonal region $a_1bc_1a_2dc_2$ (with $a_1=a_2$ and $c_1=c_2$). The paths between $e_1=c_1$ and $e_2=c_2$ in each pair of consecutive pieces are identified. Figure~\ref{fig:I_16_18_19_def} shows $I_{18}[1]$ as an example.

The graph $I_{19}[m]$ for $m \in \N$ is obtained from $E$ (see Figure~\ref{fig:Def_I_16}) by inserting $m$ copies of $h_2$ and $m$ copies of $h_3$ alternatingly --- starting with $h_2$ --- and identifying  $c_1$ and $e_1$ as well as $c_2$ and $e_2$. 
Each pair of consecutive pieces bounds a hexagonal region. This region may be triangulated. Note that this happens in a unique way as all vertices are required to be of even degree.  Figure~\ref{fig:I_16_18_19_def} shows $I_{19}[1]$ as an example.

\input{fig-irred_all}

\begin{theorem} [Suzuki and Watanabe~\cite{SuWa07}]
An Eulerian triangulation $G$ of the projective plane is irreducible if and only if $G$ is one of the graphs in Figure~\ref{fig:irred_all} or belongs to one of the families $I_{16}[s_1, \ldots, s_n], I_{18}[n]$, and $I_{19}[m]$.
\end{theorem}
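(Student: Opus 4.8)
The plan is to prove both directions by treating even-contraction and octahedron-deletion as two reduction operations whose joint unavailability is exactly irreducibility, and to read off the global structure from the local obstructions these operations impose.

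For the forward (\emph{if}) direction I would verify directly that none of the listed graphs admits either operation. For the finitely many sporadic graphs $I_1,\ldots,I_{15},I_{17},I_{20}$ this is a finite check: for each degree-$4$ vertex $x$ with Hamilton cycle $a,b,a',b'$ one confirms that $\{b,b'\}$ has a common neighbour outside $\{a,a',x\}$ and $\{a,a'\}$ has one outside $\{b,b',x\}$, so no even-contraction applies by~\eqref{eq:neighbours_of_b_b'}, and that no contractible triangle bounds an octahedron. For the three families $I_{16}[s_1,\ldots,s_n]$, $I_{18}[n]$, $I_{19}[m]$ the same check reduces to the finitely many local vertex-types occurring in the pieces $h_1,h_2,h_3$ and in the caps $D,E$, since by construction every vertex of a family member lies in a neighbourhood isomorphic to one of these bounded patterns.

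For the (\emph{only if}) direction — the substantive half — let $G$ be an irreducible Eulerian triangulation. First I would invoke the Euler characteristic of the projective plane: a triangulation satisfies $|E|=3(|V|-1)$, hence $\sum_v \deg(v)=6|V|-6$; as $G$ is Eulerian every degree is even and at least $4$, so if $d$ vertices have degree $4$ and the rest degree $\geq 6$ then $6|V|-6\geq 6|V|-2d$, forcing $d\geq 3$. Next I would convert irreducibility into local structure: at each degree-$4$ vertex $x$ with $HC(x)=a,b,a',b'$ (Theorem~\ref{thm:locally_Hamiltonian}), neither even-contraction applies, so both $\{b,b'\}$ and $\{a,a'\}$ acquire a fourth common neighbour, and the absence of an octahedron restricts how these forced neighbours can coincide. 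The core is then a propagation lemma: starting from one degree-$4$ vertex and iterating the constraint, the forced common neighbours assemble into a band of triangulated hexagonal regions, namely concatenated copies of $h_1,h_2,h_3$, which must close up consistently across the antipodal identification of the projective plane.

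The hard part is this final reconstruction. The projective-plane topology makes the band of pieces wrap through the crosscap, and one must show both that the two regions closing the band are forced to be the patterns $D$ or $E$, and that every admissible concatenation of pieces $h_{s_i}$ compatible with the even-degree condition yields exactly a member of $I_{16}$, $I_{18}$ or $I_{19}$, with the sporadic graphs appearing as the short, non-periodic closures. Controlling the interaction between the local even-contraction constraints and the global non-contractible-cycle structure — in particular ruling out would-be irreducible configurations that fail to extend to a full band — is the main obstacle, and I expect it to require a careful analysis of the Hamilton cycles $HC(v)$ around the degree-$4$ vertices together with a discharging-style count certifying that the band can terminate only at a cap.
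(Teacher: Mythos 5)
First, a point of context: the paper does not prove this statement at all --- it is imported verbatim from Suzuki and Watanabe~\cite{SuWa07}, and the present paper only \emph{uses} the classification (in the proof of Lemma~\ref{lem:irreducible}). So there is no internal proof to compare yours against; your proposal has to stand on its own as a proof of the classification, and judged that way it has a genuine gap. The entire ``only if'' direction is carried by an unproven ``propagation lemma'': you assert that the fourth common neighbours forced at degree-$4$ vertices by the failure of even-contraction ``assemble into a band of triangulated hexagonal regions, namely concatenated copies of $h_1,h_2,h_3$,'' that the band ``must close up'' in a cap isomorphic to $D$ or $E$, and that the seventeen sporadic graphs of Figure~\ref{fig:irred_all} appear as ``short, non-periodic closures.'' That assertion \emph{is} the theorem; nothing in the proposal derives it. In particular, the local constraint you extract (each pair $\{b,b'\}$ and $\{a,a'\}$ around a degree-$4$ vertex has a common neighbour beyond the three guaranteed by~\eqref{eq:neighbours_of_b_b'}) does not by itself produce any piece structure: the extra common neighbour can be realised in many ways on a surface with non-contractible cycles, the degree-$4$ vertices need not be arranged along a band, and the octahedron-deletion obstruction (which concerns contractible triangles, not degree-$4$ vertices) interacts with this in ways you only gesture at. Sporadic graphs such as $I_{9}$, $I_{10}$ or $I_{17}$ visibly do not have a band-plus-cap shape, so the claimed dichotomy ``periodic band $\Rightarrow$ family, short closure $\Rightarrow$ sporadic'' would itself need a full case analysis --- which is exactly the lengthy work done in~\cite{SuWa07}. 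Acknowledging the hard part as ``the main obstacle'' does not discharge it.

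There is also a smaller inaccuracy in your ``if'' direction. You reduce the check for the infinite families to ``finitely many local vertex-types,'' on the grounds that every vertex lies in a neighbourhood isomorphic to one of boundedly many patterns. That is false as stated: the identified vertices $e_1=e_2$ in $I_{16}[s_1,\ldots,s_n]$ (and likewise $c_1=e_1$, $c_2=e_2$ in $I_{18}[n]$ and $I_{19}[m]$) meet every inserted piece, so their degrees grow linearly with $n$. The check is still salvageable --- even-contraction can only be applied at a degree-$4$ vertex $x$, and whether it applies is determined by $N(b)\cap N(b')$ for the two opposite pairs on $HC(x)$, a bounded configuration that repeats periodically along the band --- but the argument must be phrased around the degree-$4$ vertices and their second neighbourhoods, not around a false uniform-local-structure claim. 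Your Euler-characteristic count ($d\geq 3$ vertices of degree $4$) is correct, but it only guarantees a starting point for the propagation you never carry out.
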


This characterisation  enables us to prove Lemma~\ref{lem:irreducible}.

\begin{proof}[Proof of Lemma~\ref{lem:irreducible}]

The graphs $I_5,I_8,I_{11},I_{13}, I_{15}$ and $I_{20}$ contain no \low (and thus no $K_4$) and no $\overline{C_7}$. One can check that these graphs are perfect.

Figure~\ref{fig:irred_all} shows that $I_1,I_2,I_3,I_4,I_6, I_7,I_9, I_{10} I_{12},I_{14}$ and $I_{17}$ have a loose odd wheel as subgraph. 

Now consider $I_{16}[s_1, \ldots, s_n]$.
First, suppose that the graph contains an even number of copies of $h_1$. The graphs $D$, $h_1$, $h_2$ and $h_3$ are easily seen to be $3$-colourable. The colourings can be combined to obtain a $3$-colouring of the vertices of $I_{16}[s_1, \ldots, s_n]$. Thus, the graphs contains no $K_4$. One can check easily that every triangle-free subgraph of $G$ can be coloured with  two colours.
 Therefore, the clique number and the chromatic number coincide for each subgraph of $I_{16}[s_1, \ldots, s_n]$. This shows that the graph is perfect.
 
Second, let  $I_{16}[s_1, \ldots, s_n]$ contain an odd number of copies of $h_1$.
Consider the path $P$ from $b$ to $d$ consisting of the dotted edges in each copy of $h_1$ and $h_2$. This path is induced and odd and forms a \low together with $v$ and~$a$. 

Next, consider $I_{18}[n]$ for $n \in \N$.
Each consecutive pair of pieces has exactly one edge  in common that is not adjacent to $c_1=c_2$. The union of these edges forms an induced odd path between $b$ and $d$. (For an illustration 
see Figure~\ref{fig:lem_I_18_19}.) This path together with $x$ and $y$ gives a loose odd wheel.

Last, consider $I_{19}[m]$ for $m \in \N$.
An induced induced odd path from $b$ to $d$ together with the vertices $x$ and $y$ forms a loose odd wheel in $I_{19}[m]$. 
Such a path is eg dotted in Figure~\ref{fig:lem_I_18_19} for $I_{19}[2]$. For the general case take for the first pair of $h_1$ and $h_2$ the dotted path from $v_1$ to $v_2$ which is odd. For all other pairs take the dotted path from $v_2$ to $v_3$ which is even. Sticking these paths together gives an odd induced path from $b$ to $d$.  
\end{proof}

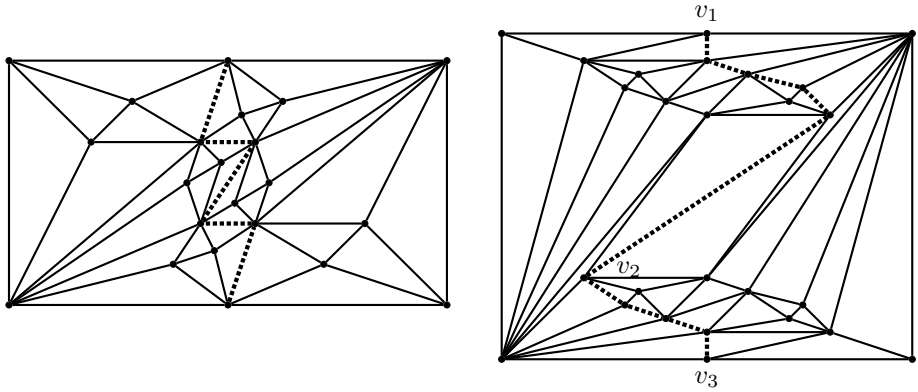
\begin{figure}[htb]
\begin{center}
\begin{tikzpicture}[scale = .18]

\begin{scope}
\node[hvertex] (x1) at (0,-9){};
\node[hvertex] (x2) at (0,9){};
\node[hvertex] (x3) at (16,9){};
\node[hvertex] (x4) at (-16,9){};
\node[hvertex] (x5) at (-16,-9){};
\node[hvertex] (x6) at (16,-9){};

\node[hvertex] (x7) at (2,-3){};
\node[hvertex] (x8) at (-2,-3){};
\node[hvertex] (x9) at (2,3){};
\node[hvertex] (x10) at (-2,3){};

\node[hvertex] (x11) at (-7,6){};
\node[hvertex] (x12) at (-10,3){};

\node[hvertex] (x13) at (7,-6){};
\node[hvertex] (x14) at (10,-3){};

\node[hvertex] (x15) at (1,5){};
\node[hvertex] (x16) at (4,6){};

\node[hvertex] (x17) at (-1,-5){};
\node[hvertex] (x18) at (-4,-6){};

\node[hvertex] (x19) at (-0.5,1.5){};
\node[hvertex] (x20) at (-3,0){};

\node[hvertex] (x21) at (0.5,-1.5){};
\node[hvertex] (x22) at (3,0){};

\draw[hedge] (x1) -- (x5);
\draw[hedge] (x1) -- (x6);
\draw[hedge] (x5) -- (x4);
\draw[hedge] (x6) -- (x3);
\draw[hedge] (x2) -- (x4);
\draw[hedge] (x2) -- (x3);

\draw[dedge] (x1) -- (x7);
\draw[dedge] (x7) -- (x8);
\draw[dedge] (x8) -- (x9);
\draw[dedge] (x9) -- (x10);
\draw[dedge] (x10) -- (x2);

\draw[hedge] (x7) -- (x3);
\draw[hedge] (x9) -- (x3);
\draw[hedge] (x8) -- (x5);
\draw[hedge] (x10) -- (x5);

\draw[hedge] (x11) -- (x12);
\draw[hedge] (x11) -- (x2);
\draw[hedge] (x11) -- (x10);
\draw[hedge] (x11) -- (x4);
\draw[hedge] (x12) -- (x4);
\draw[hedge] (x12) -- (x10);
\draw[hedge] (x12) -- (x5);

\draw[hedge] (x13) -- (x14);
\draw[hedge] (x13) -- (x1);
\draw[hedge] (x13) -- (x7);
\draw[hedge] (x13) -- (x6);
\draw[hedge] (x14) -- (x6);
\draw[hedge] (x14) -- (x7);
\draw[hedge] (x14) -- (x3);

\draw[hedge] (x15) -- (x16);
\draw[hedge] (x15) -- (x2);
\draw[hedge] (x15) -- (x10);
\draw[hedge] (x15) -- (x9);
\draw[hedge] (x16) -- (x2);
\draw[hedge] (x16) -- (x9);
\draw[hedge] (x16) -- (x3);

\draw[hedge] (x17) -- (x18);
\draw[hedge] (x17) -- (x1);
\draw[hedge] (x17) -- (x7);
\draw[hedge] (x17) -- (x8);
\draw[hedge] (x18) -- (x1);
\draw[hedge] (x18) -- (x8);
\draw[hedge] (x18) -- (x5);

\draw[hedge] (x19) -- (x20);
\draw[hedge] (x19) -- (x9);
\draw[hedge] (x19) -- (x8);
\draw[hedge] (x19) -- (x10);
\draw[hedge] (x20) -- (x8);
\draw[hedge] (x20) -- (x10);
\draw[hedge] (x20) -- (x5);

\draw[hedge] (x21) -- (x22);
\draw[hedge] (x21) -- (x8);
\draw[hedge] (x21) -- (x9);
\draw[hedge] (x21) -- (x7);
\draw[hedge] (x22) -- (x9);
\draw[hedge] (x22) -- (x7);
\draw[hedge] (x22) -- (x3);

\end{scope}

\begin{scope}[shift={(35,0)}]

\node[hvertex] (y1) at (0,-13){};
\node[novertex] (ly1) at (0,-14.5){$v_3$};
\node[hvertex] (y2) at (0,11){};
\node[novertex] (ly2) at (0,12.5){$v_1$};
\node[hvertex] (y3) at (15,11){};
\node[hvertex] (y4) at (-15,11){};
\node[hvertex] (y5) at (-15,-13){};
\node[hvertex] (y6) at (15,-13){};

\node[hvertex] (y7) at (0,9){};
\node[hvertex] (y8) at (0,-11){};

\node[hvertex] (y9) at (-9,9){};

\node[hvertex] (y10) at (3,8){};
\node[hvertex] (y11) at (-3,6){};

\node[hvertex] (y12) at (0,5){};

\node[hvertex] (y13) at (-6,7){};
\node[hvertex] (y14) at (-5,8){};

\node[hvertex] (y15) at (0,-7){};
\node[hvertex] (y16) at (9,5){};

\node[hvertex] (y17) at (6,6){};
\node[hvertex] (y18) at (7,7){};

\node[hvertex] (y19) at (-9,-7){};
\node[novertex] (ly19) at (-5.7,-6.4){$v_2$};

\node[hvertex] (y23) at (3,-8){};
\node[hvertex] (y20) at (-3,-10){};

\node[hvertex] (y21) at (-6,-9){};
\node[hvertex] (y22) at (-5,-8){};

\node[hvertex] (y24) at (9,-11){};

\node[hvertex] (y25) at (6,-10){};
\node[hvertex] (y26) at (7,-9){};

\draw[hedge] (y1) -- (y5);
\draw[hedge] (y1) -- (y6);
\draw[hedge] (y5) -- (y4);
\draw[hedge] (y6) -- (y3);
\draw[hedge] (y2) -- (y4);
\draw[hedge] (y2) -- (y3);

\draw[dedge] (y1) -- (y8);
\draw[dedge] (y2) -- (y7);

\draw[hedge] (y9) -- (y4);
\draw[hedge] (y9) -- (y5);
\draw[hedge] (y9) -- (y2);
\draw[hedge] (y9) -- (y7);
\draw[hedge] (y7) -- (y3);

\draw[hedge] (y11) -- (y5);
\draw[hedge] (y11) -- (y7);
\draw[hedge] (y11) -- (y11);
\draw[hedge] (y11) -- (y12);
\draw[hedge] (y10) -- (y11);

\draw[hedge] (y12) -- (y5);
\draw[hedge] (y10) -- (y12);
\draw[dedge] (y10) -- (y7);
\draw[hedge] (y10) -- (y3);

\draw[hedge] (y13) -- (y14);
\draw[hedge] (y9) -- (y13);
\draw[hedge] (y11) -- (y13);
\draw[hedge] (y5) -- (y13);
\draw[hedge] (y14) -- (y11);
\draw[hedge] (y14) -- (y9);
\draw[hedge] (y14) -- (y7);

\draw[hedge] (y15) -- (y3);

\draw[hedge] (y16) -- (y12);
\draw[hedge] (y16) -- (y15);
\draw[hedge] (y16) -- (y3);
\draw[dedge] (y16) -- (y18);
\draw[hedge] (y16) -- (y17);
\draw[hedge] (y17) -- (y18);
\draw[hedge] (y17) -- (y12);
\draw[hedge] (y17) -- (y10);
\draw[dedge] (y18) -- (y10);
\draw[hedge] (y18) -- (y3);

\draw[hedge] (y19) -- (y12);
\draw[dedge] (y19) -- (y16);
\draw[hedge] (y19) -- (y15);
\draw[hedge] (y19) -- (y5);

\draw[hedge] (y20) -- (y5);
\draw[dedge] (y20) -- (y8);
\draw[hedge] (y20) -- (y15);

\draw[dedge] (y21) -- (y19);
\draw[hedge] (y21) -- (y5);
\draw[dedge] (y21) -- (y20);
\draw[hedge] (y21) -- (y22);
\draw[hedge] (y22) -- (y20);
\draw[hedge] (y22) -- (y19);
\draw[hedge] (y22) -- (y15);

\draw[hedge] (y23) -- (y8);
\draw[hedge] (y23) -- (y20);
\draw[hedge] (y23) -- (y15);
\draw[hedge] (y23) -- (y3);

\draw[hedge] (y24) -- (y1);
\draw[hedge] (y24) -- (y8);
\draw[hedge] (y24) -- (y6);
\draw[hedge] (y24) -- (y3);
\draw[hedge] (y25) -- (y24);
\draw[hedge] (y25) -- (y26);
\draw[hedge] (y25) -- (y8);
\draw[hedge] (y25) -- (y23);
\draw[hedge] (y26) -- (y23);
\draw[hedge] (y26) -- (y24);
\draw[hedge] (y26) -- (y3);

\draw[hedge] (y5) -- (y8);

\end{scope}

\end{tikzpicture}

\end{center}

\caption{Two odd paths described in the proof of Lemma~\ref{lem:irreducible}. Opposite points on the outer cycles are identified. }
\label{fig:lem_I_18_19}
\end{figure}

\section{Even-contraction creating a loose odd wheel or an induced $\overline{C_7}$}\label{sec:even-contraction-low-barC7}

In this section, we prove Lemma~\ref{lem:even-split_low} and Lemma~\ref{lem:even-split_anti-hole}.

\begin{proof} [Proof of Lemma~\ref{lem:even-split_low}]
Let $W$ be a loose odd wheel in $G'$ with central vertex~$v$ and induced cycle~$C$. Let the even-contraction in $G$ be applied at $x$ together with $b$ and $b'$, let $N_G(x) = \lbrace  a,a,b,b' \rbrace$ and let $V(G')= V(G) \setminus \lbrace x, a,a,b,b' \rbrace \cup \lbrace  y \rbrace$. 

Evidently, $G$ still contains $W$ if $y \notin V(W)$.

Suppose that $y \in V(C)$. Let $u,u'$ be the two cycle vertices adjacent to $y$. 
Let $C'$ be the cycle obtained from $C$ by deletion of $y$ and addition of the edges $ub,bu'$ (if $ub,bu' \in E(G')$) or $ub,bx,xb',b'u'$ (if $ub, u'b' \in E(G)$). 
$C'$ is an odd induced cycle in $G'$. If $C'$ forms a loose odd wheel together with $v$, we are done. Otherwise, $b$ is adjacent to  $u$ and $u'$, but not to $v$. The vertex $b'$ is adjacent to $v$ and neither adjacent to $u$ nor to $u'$ (or vice versa). But then,  $\lbrace  u,u' \rbrace$ is different from $\lbrace  a,a' \rbrace$. Say $u \notin \lbrace  a,a' \rbrace$ . Since $C'$ together with $v$ does not form a loose odd wheel, there is a vertex $z$ such that the path $ \subpath{C'}{b}{z}{u'}$  together with $x$ and $v$ forms an odd hole. By Observation~\ref{obs:deg4->-loose_odd_wheel} we obtain a \low.

Next, suppose that $y$ equals the center vertex $v$ of $W$. 
Let $z_1$, $z_2$ and $z_3$ be three odd neighbours of $y$ in $W$. If $b$ respectively $b'$ is adjacent to all the three vertices in $G$, then $b$ respectively $b'$ forms a \low\ together with $C$. Assume that $bz_1, bz_2, b'z_3 \in E(G)$ and $b'z_1,bz_3 \notin E(G)$ (see Figure~\ref{fig:splitting}).
\begin{figure}

\begin{center}
\begin{tikzpicture}[scale=1]

\def\radius{2.4}

\begin{scope}

\node[hvertex] (x) at (0,0){};
\node[novertex] (xl) at (0.09,-0.2){$x$};
\node[hvertex] (b) at (0.4,0){};
\node[novertex] (bl) at (0.6,-0.1){$b$};
\node[hvertex] (b2) at (-0.4,0){};
\node[novertex] (b2l) at (-0.6,-0.1){$b'$};
\node[hvertex] (z) at (0,-0.5*\radius){};
\node[novertex] (zl) at (0,-0.5*\radius-.2){$z_1$};
\node[hvertex] (f) at (-90+120:0.5*\radius){};
\node[novertex] (fl) at (-90+120:0.5*\radius+0.2){$z_2$};
\node[hvertex] (h) at (-90+240:0.5*\radius){};
\node[novertex] (hl) at (-90+240:0.5*\radius+0.3){$z_3$};

\begin{scope}[rotate=40]
  \draw[decorate,decoration={snake,amplitude=.4mm,segment length=2mm,post length=0mm}] (0,0) circle [radius=0.5*\radius];
\end{scope}
 \draw[hedge]  (x)--(b);
  \draw[hedge] (x)--(b2);
  \draw[hedge] (b)--(z);
  \draw[hedge] (b)--(f);
  \draw[hedge] (b2)--(h);


\end{scope}

\begin{scope}[shift={(4,0)}]

\node[hvertex] (x) at (0,0){};
\node[novertex] (xl) at (0.09,-0.17){$x$};
\node[hvertex] (b) at (0.4,0){};
\node[novertex] (bl) at (0.6,-0.1){$b$};
\node[hvertex] (b2) at (-0.4,0){};
\node[novertex] (b2l) at (-0.6,-0.1){$b'$};
\node[hvertex] (z) at (0,-0.5*\radius){};
\node[novertex] (zl) at (0,-0.5*\radius-.2){$z_1$};
\node[hvertex] (f) at (-90+120:0.5*\radius){};
\node[novertex] (fl) at (-90+120:0.5*\radius+0.2){$z_2$};
\node[hvertex] (h) at (-90+240:0.5*\radius){};
\node[novertex] (hl) at (-90+240:0.5*\radius+0.3){$z_3$};

\node[hvertex] (a) at (-90+180:0.5*\radius){};
\node[novertex] (al) at (-90+180:0.5*\radius+0.3){$a$};

\node[hvertex] (ap) at (-90+336:0.5*\radius){};
\node[novertex] (apl) at (-90+336:0.5*\radius+0.3){$a'$};

\begin{scope}[rotate=40]
  \draw[decorate,decoration={snake,amplitude=.4mm,segment length=2mm,post length=0mm}] (0,0) circle [radius=0.5*\radius];
\end{scope}
  \draw[hedge] (x)--(b);
  \draw[hedge] (x)--(b2);
  \draw[hedge] (x)--(a);
  \draw[hedge] (x)--(ap);
  \draw[hedge] (b)--(a);
  \draw[hedge] (b)--(ap);  
  \draw[hedge] (b2)--(a);
  \draw[hedge] (b2)--(ap);  
  \draw[hedge] (b)--(z);
  \draw[hedge] (b)--(f);
  \draw[hedge] (b2)--(h);

%

\end{scope}

\end{tikzpicture}
\end{center}
\caption{The situation after the splitting if $y$ equals the central vertex $v$}
\label{fig:splitting}

\end{figure}
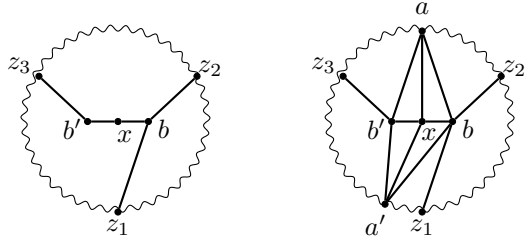
The paths $\subpath{C}{z_2}{z_3}{z_1}$  and $\subpath{C}{z_1}{z_3}{z_2}$ form odd cycles $C_{1}$ and $C_2$ together  with $b,x,b'$. By Observation~\ref{obs:deg4->-loose_odd_wheel}, $G$ contains a loose odd wheel if $C_{1}$ or $C_2$ is induced.  
Thus, assume that $C_1$ and $C_2$ have chords. 
Then, $C_{i}$ yields an induced odd subcycle $C'_{i}$. If $C'_i$ contains $b,x,b'$, it leads to a \low by Observation~\ref{obs:deg4->-loose_odd_wheel}.
Thus, $x$ is contained in a chord  of $C_i$ for $i=1,2$. As $x$ has only four neighbours, the chords are of the form $ax$ and $a'x$. Consequently, $b$ and $b'$ are also contained in chords of $C_1$ and $C_2$. (Note that it is possible that $a$ and $a'$ and coincide with vertices in $\lbrace  z_1, z_2, z_3 \rbrace$.) See Figure~\ref{fig:splitting} for an illustration.  If $ \subpath{C}{a'}{z_1}{z_2}$ or $ \subpath{C}{a}{z_2}{z_1}$ is odd, then $b$ has three odd neighbours on the cycle $C$ and yields a \low. Otherwise, the paths $ \subpath{C}{z_3}{a'}{z_1}$ and $ \subpath{C}{z_3}{a}{z_2}$ are odd and consequently, $b'$ has three odd neighbours on $C$.

This shows that $G$ contains a loose odd wheel in all cases.  
\end{proof}

\begin{figure}[htb]
\begin{center}
\begin{tikzpicture}[scale = 0.8]

\def\radius{1.3cm}
\def\hshift{3.7cm}
\def\vshift{-3.7cm}

\begin{scope}[shift={(0,0)}]
\def\angle{360/7}
\foreach \i in {0,1,2,3,4,5,6}{
  \node[hvertex] (v\i) at (90+\angle*\i:\radius){};
}

 \draw[hedge] (v1)--(v2);
 \draw[hedge] (v2)--(v3);
 \draw[hedge] (v3)--(v4);
 \draw[hedge] (v4)--(v5);
 \draw[hedge] (v5)--(v6);
 \draw[hedge] (v1)--(v3);
 \draw[hedge] (v2)--(v4);
  \draw[hedge] (v4)--(v6);
 \draw[hedge] (v3)--(v5);
 \draw[hedge] (v6)--(v1);

\node[hvertex] (b) at (-.5,\radius){};
\node[hvertex] (b') at (.5,\radius){};
\draw[hedge] (v0)--(b);
\draw[hedge] (v0)--(b');

\node at (90+\angle*0:\radius+7){$x$};
\node at (90+\angle*1:\radius+7){$u_1$};
\node at (90+\angle*2:\radius+7){$u_2$};
\node at (90+\angle*3:\radius+7){$u_3$};
\node at (90+\angle*4:\radius+7){$v_3$};
\node at (90+\angle*5:\radius+7){$v_2$};
\node at (90+\angle*6:\radius+7){$v_1$};
\node at (-.5,\radius+7){$b$};
\node at (.5,\radius+7){$b'$};

\node at (0,\radius+25){$G$};

\end{scope}
\begin{scope} [shift={(6,0)}]

\node[novertex] (y1) at (-3,.3){};
\node[novertex] (y2) at (-3,-.2){};

\node[novertex] (y3) at (-1,.3){};
\node[novertex] (y4) at (-1,-.2){};

\draw[pedge] (y1) -- (y3);

\draw[pedge] (y4) -- (y2);

\end{scope}

\begin{scope} [shift={(8,0)}]
\def\angle{360/7}
\foreach \i in {0,1,2,3,4,5,6}{
  \begin{scope}[on background layer] 
    \draw[hedge] (90+\angle*\i:\radius) -- (90+\angle+\angle*\i:\radius);
    \draw[hedge] (90+\angle*\i:\radius) -- (90+2*\angle+\angle*\i:\radius);
  \end{scope}
  \node[hvertex] (v\i) at (90+\angle*\i:\radius){};
  }
\node at (90+\angle*0:\radius+7){$y$};
\node at (90+\angle*1:\radius+7){$u_1$};
\node at (90+\angle*2:\radius+7){$u_2$};
\node at (90+\angle*3:\radius+7){$u_3$};
\node at (90+\angle*4:\radius+7){$v_3$};
\node at (90+\angle*5:\radius+7){$v_2$};
\node at (90+\angle*6:\radius+7){$v_1$};

\node at (0,\radius+25){$G'$};

\end{scope}
\end{tikzpicture}
\end{center}
\caption{Even-splitting at $\bar{C_7}$}
\label{fig:odd_antihole_splitting}
\end{figure}
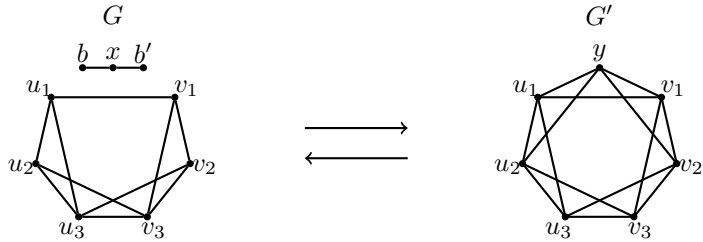

\begin{proof}[Proof of Lemma~\ref{lem:even-split_anti-hole}]
Let the even-contraction in $G$ be applied at $x$ together with $b$ and $b'$, let $N_G(x) = \lbrace  a,a,b,b' \rbrace$ and let $V(G')= V(G) \setminus \lbrace x, a,a,b,b' \rbrace \cup \lbrace  y \rbrace$.  Assume that $G'$ contains an induced $\overline{C_7}$. Then, either $G$ also contains an induced $\overline{C_7}$ (and we are done) or $y$ is one of the vertices of $\overline{C_7}$ in $G'$. Let $u_1,u_2,u_3,v_3,v_2,v_1,y,u_1$  be a cycle of $\overline{C_7}$ in $G'$. Then, $N_{G'}(y)\cap V(\overline{C_7})= \{u_1,u_2,v_1,v_2\}$ (see also Figure~\ref{fig:odd_antihole_splitting}). In $G$, the vertices $u_1,u_2,v_1,v_2$ are neighbours of $b$ or $b'$. If a vertex is adjacent to both, $b$ and $b'$, then this vertex equals $a$ or $a'$. 
We now assume that $G$ contains no \low and no induced $\overline{C_7}$ and deduce some useful observations:
\begin{equation}\label{eq:4neighbours}
\text{ At most three of the  vertices in  $\lbrace u_1,u_2,v_1,v_2\rbrace$ are adjacent to $b$ in $G$.}
\end{equation}
Otherwise, $b$ induces a $\overline{C_7}$ in $G$ together with $ u_1,u_2,u_3,v_3,v_2,v_1$. 
\begin{equation}\label{eq:u1_in_N(b)_and_u2_in_N(b')}
\text{If $u_1b$ and $u_2b'$ are edges in $G$, then $\{u_1,u_2\}\cap\{a,a'\}\neq \emptyset$.}
\end{equation}
Otherwise, the edges $u_1b'$ and $u_2b$ are not contained in $G$. Then,  $b,x,b',u_1,u_2,b$ is an odd hole that forms a \low in $G$ together with $a$.
\begin{equation}\label{eq:u1_in_N(b)_and_v1_in_N(b')}
\text{If $u_1b$ and $v_1b'$ are edges in $G$, then $\{u_1,v_1\}\cap\{a,a'\}\neq \emptyset$.}
\end{equation}
Otherwise, the edges $u_1b'$ and $v_1b$ are not contained in $G$. Then, $b,x,b',v_1,u_1,b$ forms a \low  in $G$ together with $a$. 
Next, we note that
\begin{equation}\label{eq:u1v2_and_aa'}
\{u_1,v_2\}\neq\{a,a'\}.
\end{equation}
Otherwise,  $u_1,x,v_2,v_3,u_2,u_1$ is an odd hole that forms a \low in $G$ together with $b$.
\begin{equation}\label{eq:u1v2_in_N(b')}
\text{If $u_1$ and $v_2$ are adjacent to $b'$ in $G$, then $u_2$ is also a neighbour of $b'$.}
\end{equation} 
Otherwise, the vertices $b',u_1,u_2,u_3,v_3,v_2$ form a \low in $G$ with center $u_3$.

Note that in all observations, we can exchange $u$ and $v$ as well as $b$ and $b'$.
We now use our observations in order to get a contradiction: \\

First, we assume that $u_1$ and $v_2$ are neighbours of $b'$. Then, by~\eqref{eq:u1v2_in_N(b')}, $u_2$ is adjacent to $b'$ and by~\eqref{eq:4neighbours}, $v_1$ is not adjacent to $b'$. Further, because of~\eqref{eq:u1_in_N(b)_and_u2_in_N(b')} and~\eqref{eq:u1_in_N(b)_and_v1_in_N(b')}, $ \lbrace u_1,v_2 \rbrace =\lbrace a, a' \rbrace $. This is a contradiction to~\eqref{eq:u1v2_and_aa'}.
Consequently, 
\begin{equation}\label{eq:u1v2_no_common_neighbour_in_bb'}
\text{ $u_1$ and  $v_2$ do not have a common neighbour in $\{b,b'\}$. }
\end{equation}
Let $u_1b$ and $v_2b'$ be edges of $G$, and let $u_1b',v_2b \notin E(G)$. \\
First, assume that $v_1\in N_G(b')$. Then by \eqref{eq:u1_in_N(b)_and_v1_in_N(b')}, $v_1=a$. 
If $u_2$ is a neighbour of $b'$, then  $u_2$ equals $a'$ by~\eqref{eq:u1_in_N(b)_and_u2_in_N(b')}. This is a contradiction to~\eqref{eq:u1v2_and_aa'}. Thus, $u_2\in N_G(b)$ and $v_2b\in E(G)$ by~\eqref{eq:u1v2_in_N(b')}. This contradicts~\eqref{eq:u1v2_no_common_neighbour_in_bb'}. \\
Now let $v_1$ be adjacent to $b$ but not to $b'$. This contradicts~\eqref{eq:u1_in_N(b)_and_u2_in_N(b')} because of~\eqref{eq:u1v2_no_common_neighbour_in_bb'}.
\end{proof}

\section{Even-contraction destroying an odd hole}\label{sec:even-contraction}

This section is dedicated to the proof of Lemma~\ref{lem:even-contraction}.

%
%

To prove Lemma~\ref{lem:even-contraction}, let $C=v_1v_2\ldots v_k,v_1$ be an odd hole in $G$ and let $G'$ be obtained by an even-contraction identifying $b,b'$ and $x$.
To the even-contraction applied at $x$ together with $b$ and $b'$ we associate the map 
\begin{equation*}
\gamma:V(G) \mapsto V(G)\setminus \{x,b,b'\}\cup \{y\}
\end{equation*}
with $\gamma(x)=\gamma(b)=\gamma(b')=y$ and $\gamma(v)=v$ for $v\in V(G)\setminus \{x,b,b'\}$.
We will abuse notation and will apply $\gamma$ also to subgraphs of $G$ and $G'$.

Our aim is to show that $G'$ contains an odd hole or $G$ contains a loose odd wheel. 
We split up this proof into several cases concerning $\lbrace a,b,a',b' \rbrace \cap V(C)$.

If $x \in V(C)$, then $G$ contains an odd wheel by Observation~\ref{obs:deg4->-loose_odd_wheel}.

If $\lbrace a,a',b,b' \rbrace \cap V(C) $ equals $\lbrace b,b' \rbrace$, then $\gamma(C)$ consists of an odd and an even induced cycle intersecting each other in the vertex $y$. $G'$ still contains an odd hole, if the odd cycle has length at least $5$. Otherwise, the cycle is a triangle $y,v,w,y$. Then, $b,v,w,b',x$ form a $5$-cycle. If this cycle is induced, $G$ contains a loose odd wheel by Observation~\ref{obs:deg4->-loose_odd_wheel}. Otherwise, $x$ has a neighbour among the cycle vertices $v,w$. Such a neighbour must equal $a$ or $a'$. This contradicts the assumption $V(C) \cap \lbrace a,a',b,b' \rbrace= \lbrace b,b' \rbrace$.

If $ \lbrace a,b,a',b' \rbrace V(C)$ is of size $3$, then $x$ has three odd neighbours on $C$.

If $ a,b,a',b' $ are all contained in $V(C)$, then $C$ is not an induced odd cycle. This is a contradiction.
\begin{equation} \label{eq:a_a'_on_C}
\text{
If  $\lbrace a,b,a',b' \rbrace \cap V(C) \subseteq \lbrace a,a' \rbrace$ then $C=\gamma(C)$ is an odd hole in $G'$.} 
\end{equation}

The cases where $\lbrace x,a,a',b,b' \rbrace \cap V(C)$ equals $\lbrace a',b' \rbrace$ or $\lbrace b'\rbrace$ are treated in Section~\ref{subsec:a'b'} and Section~\ref{subsec:b'} respectively.

\subsection{Even-contraction if $a',b'$ are contained in the odd hole} \label{subsec:a'b'}
In this subsection, we prove the following lemma.
\begin{lemma} \label{lem:a'b'}
Let $G$ be a nice Eulerian triangulation. Let $G'$ be obtained from $G$ by an even-contraction at $x$ and let $\{x,a,a',b,b'\}\cap V(C)$ equal $ \{a',b'\}$. If $G$ contains an odd hole $C$ and $G'$ is perfect, then $G$ contains a loose odd wheel.
\end{lemma}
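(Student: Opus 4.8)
The plan is to exploit perfection of $G'$ to force a chord, read off an extra neighbour of $b$ on the hole $C$, and then manufacture an odd hole through the degree-$4$ vertex $x$, whereupon Observation~\ref{obs:deg4->-loose_odd_wheel} supplies a \low. Write $C=v_1v_2\cdots v_k,v_1$ with $v_1=b'$. Since $a'b'\in E(G)$ (they are consecutive on the Hamilton cycle $a,b,a',b',a$ of $x$) and $C$ is induced, the edge $a'b'$ lies on $C$, so after possibly reversing $C$ we may take $a'=v_2$. Applying the contraction map $\gamma$, which merges only $x,b,b'$, fixes every vertex of $C$ except $v_1=b'$, which becomes $y$; hence $\gamma(C)=y,v_2,\ldots,v_k,y$ is again an odd cycle of length $k\ge 5$ in $G'$. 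Were it induced it would be an odd hole, contradicting perfection of $G'$ by Theorem~\ref{thm:SPGT}; so it has a chord. As $C$ is induced in $G$ and the contraction alters no adjacency among $v_2,\ldots,v_k$, every chord is incident with $y$, say $yv_j$ with $3\le j\le k-1$. Now $v_j$ lies in $N_G(x)\cup N_G(b)\cup N_G(b')$; but $N_G(b')\cap V(C)=\{v_2,v_k\}$ (again as $C$ is induced) and $N_G(x)\cap V(C)=\{v_1,v_2\}$, so necessarily $v_j\in N_G(b)$. Thus $b$ has a neighbour $v_j$ on $C$ with $3\le j\le k-1$, in addition to $v_2=a'$, while $bb'\notin E$ by~\eqref{eq:b_b'_not_adjacent}.

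With this chord in hand I would build an odd hole through $x$. The useful cycles have the shape $v_1,x,b,v_i,v_{i+1},\ldots,v_k,v_1$ for a neighbour $v_i$ of $b$ on $C$: here the cycle-neighbours of $x$ are $b$ and $b'=v_1$, which are precisely the non-adjacent diagonal of the $4$-cycle $a,b,a',b'$, and the cycle contains neither $a$ nor $a'=v_2$. Consequently $x$ can never be chorded off such a cycle, and every chord is an edge from $b$ to a later vertex of the arc, so taking $v_i$ to be the last neighbour of $b$ before $v_k$ yields an induced cycle $Q$ through $x$ of length $k-i+4$, odd if and only if $i$ is even (as $k$ is odd). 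To see that $i$ is even I would use that $x$ sits in the face $x,v_1,v_2$ of $G$, so the cycles formed by $b$ together with an arc of $C$ between two consecutive neighbours of $b$ bound a disk on the $x$-side of $C$ and are therefore contractible; by~\eqref{eq:odd_cycles_noncontractible} they are even, whence every gap between consecutive neighbours of $b$ along $v_2,\ldots,v_k$ is even and $i\equiv v_2\text{'s index}\equiv 0$. Niceness of $G$ (Observation~\ref{obs:induced_cycles}) guarantees the arcs used are induced. Then $Q$ is an odd hole through the degree-$4$ vertex $x$, and Observation~\ref{obs:deg4->-loose_odd_wheel} produces a \low in $G$.

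The first step is routine: the chord, and with it the extra neighbour $v_j$ of $b$, falls out immediately from perfection of $G'$. The main obstacle is the second step, namely the parity and inducedness bookkeeping needed to guarantee that the extracted odd hole actually runs through $x$ rather than only through $b$. The delicate points are the degenerate case $v_k\in N_G(b)$ (where the naive choice of $Q$ collapses to a $4$-cycle) and the case where the extremal neighbour of $b$ has the wrong index parity; resolving these is exactly where the non-contractibility of odd cycles~\eqref{eq:odd_cycles_noncontractible} together with the local face structure of the nice triangulation must be brought to bear, and it is the crux of the proof.
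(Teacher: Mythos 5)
Your opening step is correct and agrees with the paper's own setup: writing $C=v_1\cdots v_k,v_1$ with $b'=v_1$, $a'=v_2$, every chord of $K=\gamma(C)$ is incident with $y$, and since $N_G(b')\cap V(C)=\{v_2,v_k\}$ and $N_G(x)\cap V(C)=\{v_1,v_2\}$, each chord comes from an edge of $G$ between $b$ and some $v_j$ with $3\le j\le k-1$; perfection of $G'$ forces such a chord. (Incidentally, the degenerate case $v_k\in N_G(b)$ that you flag cannot occur at all: $v_k$ would then be a common neighbour of $b$ and $b'$, hence lie in $\{a,a',x\}$ by~\eqref{eq:neighbours_of_b_b'}, contradicting $\{x,a,a',b,b'\}\cap V(C)=\{a',b'\}$.) Your cycle $Q=v_1,x,b,v_i,\ldots,v_k,v_1$ with $v_i$ the last neighbour of $b$ on $C$ is indeed induced, and if its length is odd, Observation~\ref{obs:deg4->-loose_odd_wheel} does produce a \low. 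So everything hinges on your parity claim that $i$ is even.

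That parity claim is where the proof breaks, and the break is not repairable by the argument you sketch. You assert that each cycle formed by $b$ together with an arc of $C$ between consecutive neighbours of $b$ ``bounds a disk on the $x$-side of $C$'' and is therefore contractible, hence even by~\eqref{eq:odd_cycles_noncontractible}. But $C$ is a \emph{non-contractible} cycle in the projective plane, hence one-sided: its complement is a single region and there is no ``$x$-side''. The edges from $b$ to $C$ can be embedded so that such a cycle is non-contractible --- this is exactly the distinction between embeddings I and II (and III, IV) in Figure~\ref{fig:Haus_vom_Nikolaus}, where for instance the triangle $b,a',v,b$ may be drawn either contractibly or wrapping around the cross-cap --- and for a non-contractible cycle no parity conclusion is available. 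Moreover, even where contractibility holds, what~\eqref{eq:odd_cycles_noncontractible} (via Theorem~\ref{thm:cycles_triang}) excludes is a contractible odd \emph{hole}; a contractible \emph{triangle} is perfectly admissible, i.e.\ $b$ may be adjacent to two consecutive vertices of $C$, an odd gap of length~$1$. Both failure modes are realized by the paper's Cases B and C (e.g.\ $b$ adjacent to $v_2$ and $v_3$ only, giving $i=3$ odd and $Q$ even), and these are precisely the configurations on which the paper must spend its entire machinery --- the Hamilton-cycle analysis around $b'$ in Claims~\ref{claim:nikolaus_even_hamilton}, \ref{claim:nikolaus_odd_hamilton_triangle} and~\ref{claim:nikolaus_odd_hamilton}, with case distinctions over the possible embeddings of the chords. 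Your argument covers only the situation in which the largest-index neighbour of $b$ on $C$ happens to have even index (which subsumes the paper's easy Case A); the remaining cases, which are the crux of the lemma, are left unproved.
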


\begin{proof}[Proof of Lemma~\ref{lem:a'b'}]
Let $G$ and $G'$ be as described. 
Appplication of the even-contraction translates the odd hole $C$ of $G$ into  a new odd cycle $K$ in $G'$ with vertex set $\gamma(V(C))=V(C) \setminus \lbrace b' \rbrace \cup \lbrace y \rbrace$.

Suppose that $G'$ does not contain an odd hole. Then, $K$ is not an odd hole, ie $K$ has chords. Further, all induced subcycles of $K$ are triangles or even cycles.

Note that each chord splits an odd cycle into an odd and an even subcycle. Therefore, 
\begin{equation} \label{eq:chord->odd_subcycle}
\text{every odd cycle has an induced odd subcycle.}
\end{equation}
If $G'$ does not contain an odd hole, the induced odd subcycle is a triangle. 

As $a'$ and $b'$ are adjacent and both contained in the induced cycle $C$, the vertices $a'$ and $b'$  consecutively appear on $C$. We define  $z,w,b',a'$ and $v$ to be five consecutive vertices of $C$.

As $C$ is induced, each chord of $K$ may have $y$ and a vertex that is adjacent to $b$ in $G$ as end vertices. Thus, the cycle $K$ may only have three different triangles as a subcycle: The vertex $y$ can be contained in a triangle together with $w,z$ (Case A) or with  $a',v$ (Case B) or with two other vertices 
that are adjacent on $K$ (Case C). Figure~\ref{fig:three_possible_triangles} shows these three cases and the associated configurations in $G$ and $G'$. Note that the cases are not exclusive.

\begin{figure}[bht]
\begin{center}
\begin{tikzpicture}[scale = .9]

\begin{scope}[shift={(-6,0)}]
\node[novertex] (labelA) at (0,0){$\mathbf{G}$};
\end{scope}

\begin{scope}[shift={(-6,-3.5)}]
\node[novertex] (labelA) at (0,0){$\mathbf{G'}$};
\end{scope}

\begin{scope}[shift={(-3.5,0)}]
\node[invvertex] (cr) at (1.6,0){};
\node[invvertex] (cl) at (-1.6,0){};

\node[novertex] (labelA) at (0,-2){\textbf{Case A}};
\node[hvertex] (z) at (-1.2,-0.3){};
\node[novertex] (lz) at (-1.4,-0.5){$z$};
\node[hvertex] (w) at (- 0.8,0){};
\node[novertex] (lw) at (-0.75,-0.2){$w$};
\node[hvertex] (b2) at (-0.4,0){};
\node[novertex] (lb2) at (-0.4,-0.2){$b'$};
\node[hvertex] (a2) at (0.4,0){};
\node[novertex] (la2) at (0.65,-0.15){$a'$};

\node[novertex] (lC) at (-.2,-0.9){$C$};

\node[hvertex] (a) at (-0.4,0.9){};
\node[novertex] (la) at (-0.57,0.69){$a$};
\node[hvertex] (b) at (0.4,0.9){};
\node[novertex] (lb) at (0.6,0.9){$b$};

\node[hvertex] (x) at (0,0.45){};
\node[novertex] (lx) at (0.25,0.45){$x$};

\draw[hedge] (z) -- (w);
\draw[hedge] (w) -- (b2);
\draw[hedge] (b2) -- (a2);
\draw [decorate, decoration={snake,amplitude=.4mm,segment length=2mm,post length=0mm}] (-1.2,-0.3) arc (-180:20:0.8);

\draw[hedge] (a) -- (b);
\draw[hedge] (a) -- (b2);
\draw[hedge] (b) -- (a2);

\usetkzobj{all} 
\tkzDefPoint(0.4,0.9){B}
\tkzDefPoint(-0.7,0.9){H}
\tkzDefPoint(-1.2,-0.3){W}
\tkzCircumCenter(B,H,W)
\tkzGetPoint{0}
\tkzDrawArc[style=thick,color=black](0,B)(W)

\draw[hedge] (b) -- (x);
\draw[hedge] (b2) -- (x);
\draw[hedge] (a) -- (x);
\draw[hedge] (a2) -- (x);
\end{scope}

\begin{scope}[shift={(-3.5,-3.5)}]

\node[hvertex] (z) at (-1.2,-0.3){};
\node[novertex] (lz) at (-1.4,-0.5){$z$};
\node[hvertex] (w) at (- 0.8,0){};
\node[novertex] (lw) at (-0.75,-0.2){$w$};
\node[hvertex] (y) at (-0.4,0){};
\node[novertex] (ly) at (-0.4,-0.2){$y$};
\node[hvertex] (a2) at (0.4,0){};
\node[novertex] (la2) at (0.65,-0.15){$a'$};

\node[novertex] (lC) at (-.2,-0.9){$K$};

\node[hvertex] (a) at (-0.4,0.9){};
\node[novertex] (la) at (-0.57,0.69){$a$};

\draw[hedge] (z) -- (w);
\draw[hedge] (w) -- (y);
\draw[hedge] (a2) -- (y);
\draw[hedge] (a) -- (y);
\draw [decorate, decoration={snake,amplitude=.4mm,segment length=2mm,post length=0mm}] (-1.2,-0.3) arc (-180:20:0.8);

\usetkzobj{all} 
\tkzDefPoint(-.4,0){B}
\tkzDefPoint(-0.7,0.4){H}
\tkzDefPoint(-1.2,-0.3){W}
\tkzCircumCenter(B,H,W)
\tkzGetPoint{0}
\tkzDrawArc[style=thick,color=black](0,B)(W)

\end{scope}

\begin{scope}[shift={(0,0)}]
\node[invvertex] (cr) at (1.6,0){};
\node[invvertex] (cl) at (-1.6,0){};

\node[novertex] (labelA) at (0,-2){\textbf{Case B}};
\node[hvertex] (z) at (-1.2,-0.3){};
\node[novertex] (lz) at (-1.4,-0.5){$z$};
\node[hvertex] (w) at (- 0.8,0){};
\node[novertex] (lw) at (-0.75,-0.2){$w$};
\node[hvertex] (b2) at (-0.4,0){};
\node[novertex] (lb2) at (-0.4,-0.2){$b'$};
\node[hvertex] (a2) at (0.4,0){};
\node[novertex] (la2) at (0.39,-0.3){$a'$};
\node[hvertex] (v) at (0.8,-0.3){};
\node[novertex] (lv) at (1,-0.4){$v$};

\node[novertex] (lC) at (0,-1.1){$C$};

\node[hvertex] (a) at (-0.4,0.9){};
\node[novertex] (la) at (-0.4,1.08){$a$};
\node[hvertex] (b) at (0.4,0.9){};
\node[novertex] (lb) at (0.3,1.08){$b$};

\node[hvertex] (x) at (0,0.45){};
\node[novertex] (lx) at (0.25,0.45){$x$};

\draw[hedge] (z) -- (w);
\draw[hedge] (w) -- (b2);
\draw[hedge] (b2) -- (a2);
\draw[hedge] (a2) -- (v);
\draw [decorate, decoration={snake,amplitude=.4mm,segment length=2mm,post length=0mm}] (-1.2,-0.3) arc (-180:-3:1);

\draw[hedge] (a) -- (b);
\draw[hedge] (a) -- (b2);
\draw[hedge] (b) -- (a2);
\draw[hedge] (b) -- (v);

\draw[hedge] (b) -- (x);
\draw[hedge] (b2) -- (x);
\draw[hedge] (a) -- (x);
\draw[hedge] (a2) -- (x);
\end{scope}

\begin{scope}[shift={(0,-3.5)}]

\node[hvertex] (z) at (-1.2,-0.3){};
\node[novertex] (lz) at (-1.4,-0.5){$z$};
\node[hvertex] (w) at (- 0.8,0){};
\node[novertex] (lw) at (-0.75,-0.2){$w$};
\node[hvertex] (y) at (-0.4,0){};
\node[novertex] (ly) at (-0.4,-0.2){$y$};
\node[hvertex] (a2) at (0.4,0){};
\node[novertex] (la2) at (0.39,-0.3){$a'$};
\node[hvertex] (v) at (0.8,-0.3){};
\node[novertex] (lv) at (1,-0.4){$v$};

\node[novertex] (lC) at (0,-1.1){$K$};

\node[hvertex] (a) at (-0.4,0.9){};
\node[novertex] (la) at (-0.57,0.69){$a$};

\draw[hedge] (z) -- (w);
\draw[hedge] (w) -- (y);
\draw[hedge] (a2) -- (y);
\draw[hedge] (a2) -- (v);
\draw[hedge] (a) -- (y);
\draw [decorate, decoration={snake,amplitude=.4mm,segment length=2mm,post length=0mm}] (-1.2,-0.3) arc (-180:-3:1);

\usetkzobj{all} 
\tkzDefPoint(.8,-0.3){B}
\tkzDefPoint(0.7,0.4){H}
\tkzDefPoint(-.4,0){W}
\tkzCircumCenter(B,H,W)
\tkzGetPoint{0}
\tkzDrawArc[style=thick,color=black](0,B)(W)

\end{scope}

\begin{scope}[shift={(3.5,0)}]

\node[novertex] (labelC) at (0,-2){\textbf{Case C}};
\node[hvertex] (z) at (-1.2,-.3){};
\node[novertex] (lz) at (-1.4,-0.5){$w$};
\node[hvertex] (b2) at (- 0.8,0){};
\node[novertex] (lb2) at (-0.8,-0.25){$b'$};
\node[hvertex] (a2) at (0,0){};
\node[novertex] (la2) at (0,-0.25){$a'$};
\node[hvertex] (v1) at (0.8,-0.3){};
\node[hvertex] (v2) at (1.2,-0.5){};

\node[hvertex] (a) at (-0.8,0.9){};
\node[novertex] (la) at (-0.8,1.08){$a$};
\node[hvertex] (b) at (0,0.9){};
\node[novertex] (lb) at (-0.1,1.08){$b$};

\node[hvertex] (x) at (-0.4,0.45){};
\node[novertex] (lx) at (-0.15,0.45){$x$};

\draw[hedge] (z) -- (b2);
\draw[hedge] (b2) -- (a2);
\draw[decorate,decoration={snake,amplitude=.4mm,segment length=2mm,post length=0mm}] (a2) -- (v1);
\draw[hedge] (v1) -- (v2);

\draw[hedge] (a) -- (b);
\draw[hedge] (a) -- (b2);
\draw[hedge] (b) -- (a2);
\draw[hedge] (b) -- (v1);
\draw[hedge] (b) -- (v2);
\draw[hedge] (b) -- (x);
\draw[hedge] (b2) -- (x);
\draw[hedge] (a) -- (x);
\draw[hedge] (a2) -- (x);

\node[novertex] (lC) at (0,-1.3){$C$};
\draw [decorate, decoration={snake,amplitude=.4mm,segment length=2mm,post length=0mm}] (-1.2,-0.3) arc (-180:-10:1.2);

\end{scope}

\begin{scope}[shift={(3.5,-3.5)}]

\node[hvertex] (z) at (-1.2,-.3){};
\node[novertex] (lz) at (-1.4,-0.5){$w$};
\node[hvertex] (y) at (- 0.8,0){};
\node[novertex] (ly) at (-0.8,-0.25){$y$};
\node[hvertex] (a2) at (0,0){};
\node[novertex] (la2) at (0,-0.25){$a'$};
\node[hvertex] (v1) at (0.8,-0.3){};
\node[hvertex] (v2) at (1.2,-0.5){};

\node[hvertex] (a) at (-0.8,0.9){};
\node[novertex] (la) at (-0.97,0.69){$a$};

\draw[hedge] (z) -- (y);
\draw[hedge] (y) -- (a2);
\draw[hedge] (y) -- (a);
\draw[decorate,decoration={snake,amplitude=.4mm,segment length=2mm,post length=0mm}] (a2) -- (v1);
\draw[hedge] (v1) -- (v2);

\node[novertex] (lC) at (0,-1.3){$K$};

\draw [decorate, decoration={snake,amplitude=.4mm,segment length=2mm,post length=0mm}] (-1.2,-0.3) arc (-180:-10:1.2);

\usetkzobj{all} 
\tkzDefPoint(-.8,0){B}
\tkzDefPoint(0.4,0.4){H}
\tkzDefPoint(.8.,-.3){W}
\tkzCircumCenter(B,H,W)
\tkzGetPoint{0}
\tkzDrawArc[style=thick,color=black](0,W)(B)

\usetkzobj{all} 
\tkzDefPoint(-.8,0){B}
\tkzDefPoint(0.7,0.4){H}
\tkzDefPoint(1.2.,-.5){W}
\tkzCircumCenter(B,H,W)
\tkzGetPoint{0}
\tkzDrawArc[style=thick,color=black](0,W)(B)

\end{scope}

\end{tikzpicture}
\end{center}

\caption{The three possible constellations in $G$ which lead to triangles in $K$}
\label{fig:three_possible_triangles}
\end{figure}
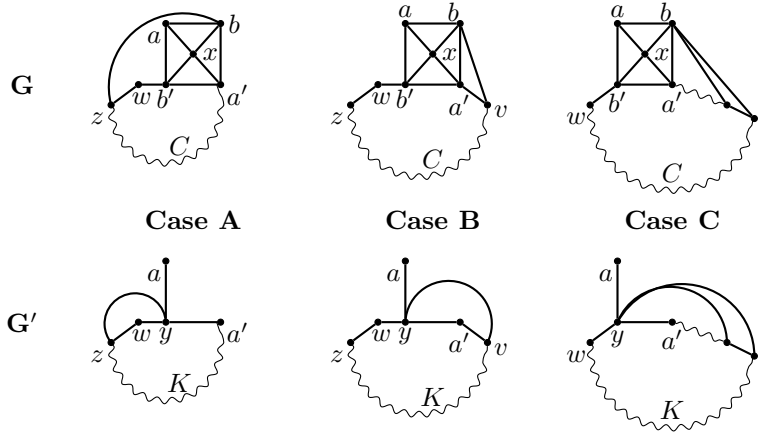

In Case A, ie if $y,w,z,y$ is a triangle in $G'$, the vertices $b,z,w,b',a'$ form a $5$-cycle in $G$. This  cycle is induced as $C$ is induced and as $b$ can by assumption not be adjacent to $b'$ or $w$. Thus, the $5$-cycle forms a \low together with $x$.

For the two other cases, we denote by $v_0$ the vertex contained in a triangle with $y$ and a further vertex of $K$ for which 
$\subpath{K}{a'}{v_0}{y}$  is of shortest odd length.
This also implies that 

\begin{equation} \label{eq:C_a'_v0_b-shortest-odd-length}
\subpath{C}{a'}{v_0}{b'} \text{ is of shortest odd length.}
\end{equation}

This means that $v_0$ equals $v$ in Case B. 
In Case $C$, denote by $v_1$ the neighbour of $v_0$ on $C$ respectively $K$ that forms a triangle with $v_0$ and $y$. 

Note that $b$ has three odd neighbours on $C$ in $G$ if the neighbour of $v_0$ that is not contained in  $\subpath{C}{a'}{v_0}{b'}$ is adjacent to $y$ in $G'$. Thus, we may assume that $v_0$ forms a triangle together with $y$ and its neighbour $v_1$ that is contained in $\subpath{C}{a'}{v_0}{b'}$.

There are two paths connecting $w$ and $a'$ on the Hamilton cycle of $b'$. Both of these paths are induced (Observation~\ref{obs:induced_cycles}) and have the same parity (Observation~\ref{obs:Eulerian}).

Depending on parity and length of these paths, we prove the statement of this lemma independently. 
Claim~\ref{claim:nikolaus_even_hamilton} applies if the paths along $HC(b')$ connecting $a'$ and $w$ are even. If the paths are odd, see Claim~\ref{claim:nikolaus_odd_hamilton_triangle} and Claim~\ref{claim:nikolaus_odd_hamilton}.

The proof strategy of all the three claims is as follows: Starting from the subgraph of $G$ described in Case B and Case C, we analyse the surrounding vertices and edges until we find an odd cycle. We are done if such an odd cycle is an odd hole that is not destroyed by the even-contraction or if this cycle is part of a \low. If the cycle we found  is not induced, the chords lead to a new odd cycle in $G$. After some steps, we always find a \low or an odd hole not affected by the even-contraction in $G$. This means, we show that in every possible triangulation satisfying Case B or Case C, $G$ contains a \low if $G'$ contains no odd hole.

\begin{claim}\label{claim:nikolaus_even_hamilton}
Let the two paths along $HC(b')$ connecting $a'$ and $w$ be even. 
If $G'$ does not contain an odd hole, then $G$ contains a loose odd wheel.
\end{claim}

\begin{proof}
We first analyse $G$. The path $\subpath{HC(b')}{w}{a'}{a}$ forms an odd cycle together with the path $C-b'$ in $G$. If this cycle is induced, $G$ contains a loose odd wheel with center $b'$. 
If the cycle contains chords, these chords have one end vertex in  $\subpath{HC(b')}{w}{a'}{a}$ and one endvertex in $C-b'$. This comes from the fact that $C-b'$ and $\subpath{HC(b')}{w}{a'}{a}$ (see Observation~\ref{obs:induced_cycles}) are induced. The chords lead to an induced odd subcycle (see~\eqref{eq:chord->odd_subcycle}). If a longest such  cycle $\tilde{C}$  is not a triangle, $G$ contains an odd hole that is not affected by the even-contraction. Thus, suppose that $\tilde{C}$ is a triangle with either two vertices of $C-b'$ and one vertex of $\subpath{HC(b')}{w}{a'}{a}$ or vice versa. 

If $\tilde{C}$ is a triangle with one vertex $p$ of $\subpath{HC(b')}{w}{a'}{a}$ and two vertices $c,c'$ of $C-b'$, the vertex $p$ either has three odd neighbours on $C$ (namely the $c,c'$ and $b'$) or one of the two neighbours of $p$, say $c$,  is of the form $c\in V(C) \setminus \lbrace b' \rbrace$ where the path $\subpath{C}{v_0}{c}{a'} $ is even and does not contain $c'$. 
\begin{gather}
\text{ If } p \in \subpath{HC(b')}{w}{a'}{a} \text{ and } c \in C-b' \text{ and }pc \in E(G) \text{ and }  \nonumber\\
\subpath{C}{v_0}{c}{a'}  \text{ is even, then } G \mbox{ contains a loose odd wheel} \label{eq:chord}
\end{gather} 
by the following observations:\\
If $\subpath{C}{v_0}{c}{a'}$ is even, then the path $\lbrace c,p,b',x,b,v_0 \rbrace$ together with $\subpath{C}{v_0}{c}{a'}$ gives an odd cycle~$C'$.  
If $C'$ is induced, then $G$ contains a \low with center $a$. 
We will now see that every possible chord of $C'$ also yields a loose odd wheel:
Since $C$ is induced, there is no chord between two vertices of $C$. Further, no chord may have $x$ as an endvertex.  By~\eqref{eq:neighbours_of_b_b'} and~\eqref{eq:b_b'_not_adjacent}, the vertices $b$ and $p$ are not adjacent and $G$ contains a loose odd wheel if $b$ is adjacent to $b'$. 
If $b$ or $p$ is adjacent to a vertex of $\subpath{C}{v_0}{c}{a'}$, then either $b$ respectively $p$ has three odd neighbours on $C$, or the corresponding odd subcycle contains $x,b,b'$. In both cases, we obtain a \low with $a$.

Let $\tilde{C}$ now be a triangle with two vertices $p_1, p_2$ of $\subpath{HC(b')}{w}{a'}{a}$ and one vertex $c$ of $C-b'$. Without loss of generality, select $c$ such that $\subpath{C}{v_0}{c}{b'}$  is of minimal length.
By choosing $p_1=p$, it follows from~\eqref{eq:chord} that $G$ contains a loose odd wheel if $\subpath{C}{v_0}{c}{a'}$ is even. Otherwise, $p_1$ and $p_2$ form odd cycles $C_1, C_2$ together with the path $C-\subpath{C}{c}{b'}{a'}$; see~\eqref{eq:chord}. If one of these cycles is induced, $C-\subpath{C}{c}{b'}{a'}$, $p_1$ and $p_2$ form a loose odd wheel. 
Assume that $p_i$ ($i \in \lbrace1,2$) is adjacent to an inner vertex $u$ of $C-\subpath{C}{c}{b'}{a'}$. Choose $u$ in such a way that $\subpath{C}{u}{w}{b'}$ is of minimal length. Again apply~\eqref{eq:chord} where $p_i=p$ and $u=c$ to see that $G$ either contains a \low or the  
smallest induced odd subcycle of $C_i$ contains the vertices $w,b',p_i,u$. This odd cycle is induced and contains at least five vertices. As the cycle is not affected by the even-contraction, the graph $G'$ contains an odd hole. 
\end{proof}

Note that we can assume that $C$ is non-contractible (see~\eqref{eq:odd_cycles_noncontractible}), but do not know in which way the chords are embedded.
Figure~\ref{fig:Haus_vom_Nikolaus} shows all possible embeddings up to topological isomorphy. In Case B, there are two ways to embed the edge $bv$: the odd cycle $b,a',v,b$ may be contractible (Case I) or non-contractible (Case II). Similarly, there are four embeddings in Case C, differing in  the (non)-contractability of the cycles $b,a',\subpath{C}{a'}{v_1}{b'},v_1,b$ and $b,a',\subpath{C}{a'}{v_0}{b'},v_0,b$.

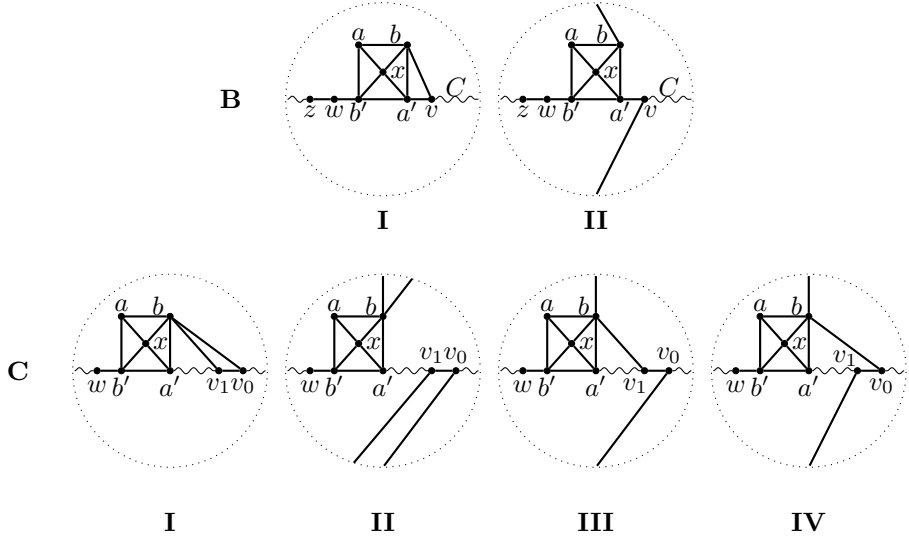
\begin{figure}[htb]
\begin{center}
\begin{tikzpicture}[scale = .8]

\begin{scope}[shift={(3.5,-4.5)}]

\draw[dotted] (0,0) circle (1.6cm);
\node[invvertex] (cr) at (1.6,0){};
\node[invvertex] (cl) at (-1.6,0){};

\node[novertex] (labelB) at (-2.5,0){\textbf{B}};
\node[hvertex] (z) at (-1.2,0){};
\node[novertex] (lz) at (-1.2,-0.2){$z$};
\node[hvertex] (w) at (- 0.8,0){};
\node[novertex] (lw) at (-0.8,-0.2){$w$};
\node[hvertex] (b2) at (-0.4,0){};
\node[novertex] (lb2) at (-0.4,-0.2){$b'$};
\node[hvertex] (a2) at (0.4,0){};
\node[novertex] (la2) at (0.4,-0.2){$a'$};
\node[hvertex] (v) at (0.8,0){};
\node[novertex] (lv) at (0.8,-0.2){$v$};

\node[novertex] (lC) at (1.2,0.2){$C$};

\node[hvertex] (a) at (-0.4,0.9){};
\node[novertex] (la) at (-0.4,1.08){$a$};
\node[hvertex] (b) at (0.4,0.9){};
\node[novertex] (lb) at (0.2,1.08){$b$};

\node[hvertex] (x) at (0,0.45){};
\node[novertex] (lx) at (0.25,0.45){$x$};

\draw[decorate,decoration={snake,amplitude=.4mm,segment length=2mm,post length=0mm}] (cl) -- (z);
\draw[hedge] (z) -- (w);
\draw[hedge] (w) -- (b2);
\draw[hedge] (b2) -- (a2);
\draw[hedge] (a2) -- (v);
\draw[decorate,decoration={snake,amplitude=.4mm,segment length=2mm,post length=0mm}] (v) -- (cr);

\draw[hedge] (a) -- (b);
\draw[hedge] (a) -- (b2);
\draw[hedge] (b) -- (a2);
\draw[hedge] (b) -- (v);
\draw[hedge] (b) -- (x);
\draw[hedge] (b2) -- (x);
\draw[hedge] (a) -- (x);
\draw[hedge] (a2) -- (x);
\node[novertex] (l) at (0,-2){\textbf{I}};

\end{scope}

\begin{scope}[shift={(7,-4.5)}]
\draw[dotted] (0,0) circle (1.6cm);
\node[invvertex] (cr) at (1.6,0){};
\node[invvertex] (cl) at (-1.6,0){};
\node[invvertex] (cu) at (0,1.6){};
\node[invvertex] (cd) at (0,-1.6){};

\node[hvertex] (z) at (-1.2,0){};
\node[novertex] (lz) at (-1.2,-0.2){$z$};
\node[hvertex] (w) at (- 0.8,0){};
\node[novertex] (lw) at (-0.8,-0.2){$w$};
\node[hvertex] (b2) at (-0.4,0){};
\node[novertex] (lb2) at (-0.4,-0.2){$b'$};
\node[hvertex] (a2) at (0.4,0){};
\node[novertex] (la2) at (0.4,-0.2){$a'$};
\node[hvertex] (v) at (0.8,0){};
\node[novertex] (lv) at (0.9,-0.2){$v$};
\node[novertex] (lC) at (1.2,0.2){$C$};

\node[hvertex] (a) at (-0.4,0.9){};
\node[novertex] (la) at (-0.4,1.08){$a$};
\node[hvertex] (b) at (0.4,0.9){};
\node[novertex] (lb) at (0.1,1.08){$b$};

\node[hvertex] (x) at (0,0.45){};
\node[novertex] (lx) at (0.25,0.45){$x$};
\draw[decorate,decoration={snake,amplitude=.4mm,segment length=2mm,post length=0mm}] (cl) -- (z);
\draw[hedge] (z) -- (w);
\draw[hedge] (w) -- (b2);
\draw[hedge] (b2) -- (a2);
\draw[hedge] (a2) -- (v);
\draw[decorate,decoration={snake,amplitude=.4mm,segment length=2mm,post length=0mm}] (v) -- (cr);

\draw[hedge] (a) -- (b);
\draw[hedge] (a) -- (b2);
\draw[hedge] (b) -- (a2);
\draw[hedge] (b) -- (cu);
\draw[hedge] (v) -- (cd);
\draw[hedge] (b) -- (x);
\draw[hedge] (b2) -- (x);
\draw[hedge] (a) -- (x);
\draw[hedge] (a2) -- (x);
\node[novertex] (l) at (0,-2){\textbf{II}};
\end{scope}

\begin{scope}[shift={(0,-9)}]

\tikzstyle{greyedge}=[thick, color=hellgrau]

\draw[dotted] (0,0) circle (1.6cm);
\node[invvertex] (cr) at (1.6,0){};
\node[invvertex] (cl) at (-1.6,0){};

\node[invvertex] (cu) at (0,1.6){};
\node[invvertex] (cd) at (0,-1.6){};
\node[invvertex] (cur) at (0.5,1.5){};
\node[invvertex] (cdl) at (-0.5,-1.5){};

\node[novertex] (labelC) at (-2.5,0){\textbf{C}};
\node[hvertex] (z) at (-1.2,0){};
\node[novertex] (lz) at (-1.2,-0.25){$w$};
\node[hvertex] (b2) at (- 0.8,0){};
\node[novertex] (lb2) at (-0.8,-0.25){$b'$};
\node[hvertex] (a2) at (0,0){};
\node[novertex] (la2) at (0,-0.25){$a'$};
\node[hvertex] (v1) at (0.8,0){};
\node[novertex] (lv1) at (0.8,-0.25){$v_1$};
\node[hvertex] (v2) at (1.2,0){};
\node[novertex] (lv2) at (1.2,-0.25){$v_0$};

\node[hvertex] (a) at (-0.8,0.9){};
\node[novertex] (la) at (-0.8,1.08){$a$};
\node[hvertex] (b) at (0,0.9){};
\node[novertex] (lb) at (-0.2,1.08){$b$};

\node[hvertex] (x) at (-0.4,0.45){};
\node[novertex] (lx) at (-0.15,0.45){$x$};

\draw[decorate,decoration={snake,amplitude=.4mm,segment length=2mm,post length=0mm}] (cl) -- (z);
\draw[hedge] (z) -- (b2);
\draw[hedge] (b2) -- (a2);
\draw[decorate,decoration={snake,amplitude=.4mm,segment length=2mm,post length=0mm}] (a2) -- (v1);
\draw[hedge] (v1) -- (v2);
\draw[decorate,decoration={snake,amplitude=.4mm,segment length=2mm,post length=0mm}] (cr) -- (v2);

\draw[hedge] (a) -- (b);
\draw[hedge] (a) -- (b2);
\draw[hedge] (b) -- (a2);
\draw[hedge] (b) -- (v1);
\draw[hedge] (b) -- (v2);
\draw[hedge] (b) -- (x);
\draw[hedge] (b2) -- (x);
\draw[hedge] (a) -- (x);
\draw[hedge] (a2) -- (x);

\node[novertex] (l) at (0,-2.5){\textbf{I}};

\end{scope}

\begin{scope}[shift={(3.5,-9)}]
\draw[dotted] (0,0) circle (1.6cm);
\node[invvertex] (cr) at (1.6,0){};
\node[invvertex] (cl) at (-1.6,0){};

\node[invvertex] (cu) at (0,1.6){};
\node[invvertex] (cd) at (0,-1.6){};
\node[invvertex] (cur) at (0.5,1.55){};
\node[invvertex] (cdl) at (-0.5,-1.55){};

\node[hvertex] (z) at (-1.2,0){};
\node[novertex] (lz) at (-1.2,-0.25){$w$};
\node[hvertex] (b2) at (- 0.8,0){};
\node[novertex] (lb2) at (-0.8,-0.25){$b'$};
\node[hvertex] (a2) at (0,0){};
\node[novertex] (la2) at (0,-0.25){$a'$};
\node[hvertex] (v1) at (0.8,0){};
\node[novertex] (lv1) at (0.8,.25){$v_1$};
\node[hvertex] (v2) at (1.2,0){};
\node[novertex] (lv2) at (1.2,.25){$v_0$};

\node[hvertex] (a) at (-0.8,0.9){};
\node[novertex] (la) at (-0.8,1.08){$a$};
\node[hvertex] (b) at (0,0.9){};
\node[novertex] (lb) at (-0.2,1.08){$b$};

\node[hvertex] (x) at (-0.4,0.45){};
\node[novertex] (lx) at (-0.15,0.45){$x$};

\draw[decorate,decoration={snake,amplitude=.4mm,segment length=2mm,post length=0mm}] (cl) -- (z);
\draw[hedge] (z) -- (b2);
\draw[hedge] (b2) -- (a2);
\draw[decorate,decoration={snake,amplitude=.4mm,segment length=2mm,post length=0mm}] (a2) -- (v1);
\draw[hedge] (v1) -- (v2);
\draw[decorate,decoration={snake,amplitude=.4mm,segment length=2mm,post length=0mm}] (cr) -- (v2);

\draw[hedge] (a) -- (b);
\draw[hedge] (a) -- (b2);
\draw[hedge] (b) -- (a2);
\draw[hedge] (b) -- (cu);
\draw[hedge] (b) -- (cur);
\draw[hedge] (v1) -- (cdl);
\draw[hedge] (v2) -- (cd);

\draw[hedge] (b) -- (x);
\draw[hedge] (b2) -- (x);
\draw[hedge] (a) -- (x);
\draw[hedge] (a2) -- (x);
\node[novertex] (l) at (0,-2.5){\textbf{II}};
\end{scope}

\begin{scope}[shift={(7,-9)}]
\draw[dotted] (0,0) circle (1.6cm);
\node[invvertex] (cr) at (1.6,0){};
\node[invvertex] (cl) at (-1.6,0){};

\node[invvertex] (cu) at (0,1.6){};
\node[invvertex] (cd) at (0,-1.6){};
\node[invvertex] (cur) at (0.5,1.5){};
\node[invvertex] (cdl) at (-0.5,-1.5){};

\node[hvertex] (z) at (-1.2,0){};
\node[novertex] (lz) at (-1.2,-0.25){$w$};
\node[hvertex] (b2) at (- 0.8,0){};
\node[novertex] (lb2) at (-0.8,-0.25){$b'$};
\node[hvertex] (a2) at (0,0){};
\node[novertex] (la2) at (0,-0.25){$a'$};
\node[hvertex] (v1) at (0.8,0){};
\node[novertex] (lv1) at (0.65,-0.25){$v_1$};
\node[hvertex] (v2) at (1.2,0){};
\node[novertex] (lv2) at (1.2,.25){$v_0$};

\node[hvertex] (a) at (-0.8,0.9){};
\node[novertex] (la) at (-0.8,1.08){$a$};
\node[hvertex] (b) at (0,0.9){};
\node[novertex] (lb) at (-0.2,1.08){$b$};

\node[hvertex] (x) at (-0.4,0.45){};
\node[novertex] (lx) at (-0.15,0.45){$x$};

\draw[decorate,decoration={snake,amplitude=.4mm,segment length=2mm,post length=0mm}] (cl) -- (z);
\draw[hedge] (z) -- (b2);
\draw[hedge] (b2) -- (a2);
\draw[decorate,decoration={snake,amplitude=.4mm,segment length=2mm,post length=0mm}] (a2) -- (v1);
\draw[hedge] (v1) -- (v2);
\draw[decorate,decoration={snake,amplitude=.4mm,segment length=2mm,post length=0mm}] (cr) -- (v2);

\draw[hedge] (a) -- (b);
\draw[hedge] (a) -- (b2);
\draw[hedge] (b) -- (a2);
\draw[hedge] (b) -- (v1);
\draw[hedge] (b) -- (cu);
\draw[hedge] (v2) -- (cd);

\draw[hedge] (b) -- (x);
\draw[hedge] (b2) -- (x);
\draw[hedge] (a) -- (x);
\draw[hedge] (a2) -- (x);
\node[novertex] (l) at (0,-2.5){\textbf{III}};
\end{scope}

\begin{scope}[shift={(10.5,-9)}]
\draw[dotted] (0,0) circle (1.6cm);
\node[invvertex] (cr) at (1.6,0){};
\node[invvertex] (cl) at (-1.6,0){};

\node[invvertex] (cu) at (0,1.6){};
\node[invvertex] (cd) at (0,-1.6){};
\node[invvertex] (cur) at (0.5,1.5){};
\node[invvertex] (cdl) at (-0.5,-1.5){};

\node[hvertex] (z) at (-1.2,0){};
\node[novertex] (lz) at (-1.2,-0.25){$w$};
\node[hvertex] (b2) at (- 0.8,0){};
\node[novertex] (lb2) at (-0.8,-0.25){$b'$};
\node[hvertex] (a2) at (0,0){};
\node[novertex] (la2) at (0,-0.25){$a'$};
\node[hvertex] (v1) at (0.8,0){};
\node[novertex] (lv1) at (0.6,0.2){$v_1$};
\node[hvertex] (v2) at (1.2,0){};
\node[novertex] (lv2) at (1.2,-0.25){$v_0$};

\node[hvertex] (a) at (-0.8,0.9){};
\node[novertex] (la) at (-0.8,1.08){$a$};
\node[hvertex] (b) at (0,0.9){};
\node[novertex] (lb) at (-0.2,1.08){$b$};

\node[hvertex] (x) at (-0.4,0.45){};
\node[novertex] (lx) at (-0.15,0.45){$x$};

\draw[decorate,decoration={snake,amplitude=.4mm,segment length=2mm,post length=0mm}] (cl) -- (z);
\draw[hedge] (z) -- (b2);
\draw[hedge] (b2) -- (a2);
\draw[decorate,decoration={snake,amplitude=.4mm,segment length=2mm,post length=0mm}] (a2) -- (v1);
\draw[hedge] (v1) -- (v2);
\draw[decorate,decoration={snake,amplitude=.4mm,segment length=2mm,post length=0mm}] (cr) -- (v2);

\draw[hedge] (a) -- (b);
\draw[hedge] (a) -- (b2);
\draw[hedge] (b) -- (a2);
\draw[hedge] (b) -- (cu);
\draw[hedge] (b) -- (v2);
\draw[hedge] (v1) -- (cd);

\draw[hedge] (b) -- (x);
\draw[hedge] (b2) -- (x);
\draw[hedge] (a) -- (x);
\draw[hedge] (a2) -- (x);

\node[novertex] (l) at (0,-2.5){\textbf{IV}};
\end{scope}

\end{tikzpicture}
\end{center}

\caption{Different ways of embedding the chords of $K$. (Opposite points on the dotted cycle are identified.)}
\label{fig:Haus_vom_Nikolaus}
\end{figure}

\begin{claim}\label{claim:nikolaus_odd_hamilton_triangle}
Let the $a'$-$w$-path along $HC(b')$ that contains $a$ be odd and of length~$3$. If $G'$ does not contain an odd hole, then $G$ contains a loose odd wheel.
\end{claim}

\begin{proof} 
Note that in this case, the vertices $a$ and $w$ are adjacent. 

We first treat Case B, ie we suppose that $v_0=v$ is adjacent to $b$.
The path $\subpath{C}{w}{a'}{b'}$ forms a $z'$-$b'$-path together with the edges $a'b,bx,xb'$. This path and the path $V(C) - w$ form  cycles $C_{Q,x}$ and $C_{Q}$ of different parity together with $Q=\subpath{HC(w)}{b'}{z}{a}$. If the arising odd cycle is induced, $G$ contains a loose odd wheel with center $w$. 

We analyse the different types of chords:\\
First, note that $x$ cannot be an endvertex of a chord and that $b \notin V(Q)$  by~ \eqref{eq:neighbours_of_b_b'}.
If $b$ is the endvertex of a chord, the chord must be of the form $bq$ with $q\in V(Q)$. Then, $G$ contains the $5$-cycle $q,w,b',x,b,q$. As $bw, b'q \notin E(G)$  by~\eqref{eq:neighbours_of_b_b'}, either $b$ is adjacent to $b'$ or $q,w,b',x,b,q$ is induced. In both cases, $G$ contains a loose odd wheel; see~\eqref{eq:b_b'_not_adjacent} and Observation~\ref{obs:deg4->-loose_odd_wheel}.
If $a'$ is the endvertex of a chord, the chord must be of the form $a'q$ with $q\in V(Q)$. 
This gives the $5$-cycle $w,a,x,a',q,w$. If this cycle is induced, it forms a \low with $b$. We are done unless $aq$ is a chord. 

In embedding II (Figure~\ref{fig:Haus_vom_Nikolaus}), the cycle obtained from $C$ by replacing the edge $a'v$ with $a'b, bv$ separates $a$ from $q$. Thus, $a$ and $q$ cannot be adjacent. 
In embedding I, the edge $aq$ yields the contractible cycle $w,a,x,a',q,w$. 
The interior of this cycle contains the path $Q'= \subpath{HC(b')}{w}{a'}{a}$. Note that $Q'$ is odd and that we are done if $q \in V(Q')$; then $\lbrace w,b',a' \rbrace$ is a set of odd neighbours of $q$ on $C$.
Let $Q''$ be the path joining $w$ and the other neighbour of $z$ on $C$ along the Hamilton cycle of $z$ such that $q$ is not contained in $V(Q'')$.
Then, $V(C)\setminus \{z\}$ and  $Q' \cup V(C)\setminus \{z,b'\}$ form cycles $C'$ and $C''$ of different parity together with $Q''$.

 We will now consider possible chords of the associated odd cycle.
The vertices of $Q'$ are not adjacent to further vertices of $C$ or to a vertex of $Q''$ since they are contained in the interior of the  contractible cycle $w,q,a',b',w$. 
If a vertex of $Q''$ is adjacent to a vertex of $C$, take the smallest induced cycle in $C'$ and $C''$ that contains $w$. Then, again one of the cycles is odd. If the odd and induced cycle contains at least three vertices of $Q''$, then it forms a loose odd wheel with  $z$. Otherwise, the neighbour $q_1'' \in V(Q'')$ of $w$  is adjacent to a vertex of $C$ and forms an induced odd cycle. Then, either $q_1''$ has three odd neighbours on $C$ or the cycle using $Q'$ is odd. In both cases, we obtain a loose odd wheel with center vertex $b'$.

\bigskip

We now treat Case C, ie we assume that $v_0$ is not adjacent to $b$. Recall that $v_1$ is the vertex adjacent to $v_0$ on $\subpath{C}{a'}{v_0}{b'}$.
Let $\tilde{v} \not=a'$ be adjacent to $v$ on $C$.
There are two paths between $a'$ and $\tilde{v}$ along $HC(v)$. In all of the four possible embeddings of $bv_0$ and $bv_1$ (see Figure~\ref{fig:Haus_vom_Nikolaus}), one of the paths is contained in a region whose boundary does not contain $a$. Depending on the embedding, denote this path by $P=w_0, w_1, \ldots, w_k$ with $a'=w_0$ and $w_k=\tilde{v}$. 
 Let $j \in \lbrace  1, \ldots, k  \rbrace$ be the first index such that $w_j$ is adjacent to a vertex of $C-\{a',\tilde{v}\}$ (if such a chord exists, otherwise set $j=k$). Let $w'$ be the vertex closest to $v_1$ along $\subpath{C}{a'}{v_1}{v_0}$ that is adjacent to $w_j$. Note that  the cycles formed by $a'=w_0, w_1,\ldots, w_j$ together with $\subpath{C}{w'}{w}{\tilde{v}} \cup  \lbrace a,x  \rbrace$  respectively with $\subpath{C}{w'}{a'}{\tilde{v}}$ are of different parity. 
The only possible chords of the two cycles are edges between $a$ and $V(C)$.
If $ac$ is a chord with $c \in V(C)$, then $a$ has three odd neighbours on $C$ or the arising subcycle that contains $x, a'=w_0, w_1,\ldots, w_j$  is odd. This cycle yields an odd wheel with center vertex $\tilde{v}$ if $j\geq 2$. If $j=1$, the cycle formed by $a'$ and $w_1$ together with $\subpath{C}{w'}{w}{\tilde{v}} \cup  \lbrace a,x\rbrace$ is odd or $w_1$ has three odd neighbours on $C$. In both cases, $G$ contains a \low.
\end{proof}

\begin{claim}\label{claim:nikolaus_odd_hamilton}
Let the $a'$-$w$-path along $HC(b')$ that contains $a$ be odd and of length at least $5$. If $G'$ is perfect, then $G$ contains a loose odd wheel.
\end{claim}

\begin{proof}
The path $\subpath{HC(b')}{a}{w}{x}$ connecting $a$ and $w$ along $HC(b')$ is odd and has length at least $3$. 
First note that we can assume that $b$ is neither contained in $\subpath{HC(b')}{a}{w}{x}$ nor adjacent to a vertex of $HC(b')$ (see~\eqref{eq:b_b'_not_adjacent} and \eqref{eq:neighbours_of_b_b'}). 
The path $\subpath{HC(b')}{a}{w}{x}$ together with $b$ and $\subpath{C}{v_0}{w}{b'}$ forms an odd cycle $C'$. If $C'$ is induced in $G$, it forms  a loose odd wheel together with $b'$.

Thus, suppose that the cycle has chords. 
With the same arguments as for~\eqref{eq:chord} we can show the following.
Let  $p\in  V(\subpath{HC(b')}{a}{w}{x}) \setminus \lbrace a \rbrace$ and $c\in V(C) \setminus \lbrace b' \rbrace$ be adjacent in $G$ and let the path $\subpath{C}{v_0}{c}{a'}$  be even. Then, $G$ contains a loose odd wheel. 
As before we can conclude that 
\begin{equation}\label{eq:triangle_low}
\text{$G$ has no triangle with vertices of $V(\subpath{HC(b')}{a}{w}{x}) \setminus \{a\}$ and $V(C)$.}
\end{equation}
Otherwise, $G$ contains a \low.

\bigskip

Suppose there is a chord from $a$ to a vertex of $C$. 
Let $c$ be the vertex  adjacent to $a$ that is closest to $b'$ on $\subpath{C}{b'}{v_0}{a'}$. If $\subpath{C}{w}{c}{a'}$ is odd, we get an odd cycle with $a, V(\subpath{HC(b')}{a}{w}{x})$ and $V(\subpath{C}{w}{c}{a'})$. If the cycle is induced we obtain a loose odd wheel with center vertex $b'$. Otherwise, the smallest subgraph is an odd hole or a triangle, and we are done by~\eqref{eq:triangle_low}. 
Thus, we can assume that $\subpath{C}{w}{c}{a'}$ is even.

Let $R$ be the path connecting $b'$ and $z$ along $HC(w)$ such that an edge between a vertex of $R$ and $a$ always gives a contractible cycle. Let $C_{R,a}$ be the cycle formed by $V(\subpath{C}{b'}{c}{a'}) \setminus \{w\}$, $V(R)$ and $a$, and let $C_{R,x}$ be the cycle formed by $V(R), x,b$ and $\subpath{C}{v_0}{z}{a'}$. One of the cycles $C_{R,a}$ and $C_{R,x}$ is odd. If this cycle is induced, $G$ contains a loose odd wheel with $w$ as center.\\ 
We now consider possible chords in the two cycles. The vertex $b$ cannot be adjacent to a vertex of $R$ as $ac \in E(G)$. Further, $a$ cannot be adjacent to a vertex of $C\cap C_{R,a}$, by the definition of $ac$. The only possible chords are edges from $R$ to $C$, edges from $R$ to $a$ and edges from $C$ to $b$. If there is a chord between $C$ and $b$, then $b$ has three odd neighbours on $C$ (and $G$ has a \low) or there is still an induced odd cycle containing $b,x$ and $V(R)$. 
A chord from a vertex of $R$ to $C$ leads to an odd hole either in $C_{R,a}$ or in $C_{R,x}$. If the odd hole contains three vertices of $R$, we are done. Otherwise, the neighbour $r_1$ of $b'$ on $R$ is contained in a chord from $R$ to $C$. But then, either $r_1$ has three odd neighbours in $C$ or $G$ has an odd hole that contains $x$. In both cases we obtain a \low (see Observation~\ref{obs:deg4->-loose_odd_wheel}).\\ 
Assume that there is a chord from $a$ to a vertex $r$ of $R$. 
Recall that $G$ has a $3$-colouring by Observation~\ref{obs:3colours}. Thus, in $G$, the colours of the vertices on a Hamilton cycle $HC(u)$ alternate for every $u \in V(G)$. 
If $\subpath{HC(w)}{r}{b'}{z}$ is odd, this means that $r$ and $b'$ have different colours. As $a$ and $w$ are adjacent to $r$ and $b'$, the vertices $a$ and $w$ then  have the same colour. This contradicts our assumption that the $a'-w$-path along $HC(b')$ that contains $a$ is odd and shows that $\subpath{HC(w)}{r}{b'}{z}$ is even. Consequently, $C_{R,a}$ has an odd induced subcycle that contains $a$. This cycle is not affected by the even-contraction and we are done if this cycle is of length at least five. 

Assume that this odd cycle is a triangle.
Then, there is a vertex $r\in R$ with $ra \in E(G)$ and $rc \in E(G)$ where $c$ is the vertex adjacent to $a$ that is closest to $b'$ on $\subpath{C}{b'}{v_0}{a'}$ (as described in the beginning of the subcase). Choose $r$ such that its distance to $b'$ on the path $R$ is minimal. Since $\subpath{C}{b'}{c}{w}$ is even, the vertices of $\subpath{C}{b'}{c}{w}$ together with the vertices of $\subpath{R}{b'}{r}{z}$ form an odd cycle $C_{r,R}$. If this cycle is induced, it yields an odd wheel with center vertex $w$. The vertices of $C_{r,R}\cap C$ cannot form chords, since $C$ is induced. The vertices of $R-r$ cannot be adjacent to further vertices of $C_{r,R}\cap C$ since they lie in a contractible cycle which is closed by $ra$. If there is a chord from $r$ to a vertex  of $C_{r,R}\cap C$, then either this edge or the edge $ac$ form a contractible cycle that includes a part of the Hamilton cycle of $z$ --- the path $R'$. Thus, in that case no vertex of $R'$ is adjacent to $a$ or $b$. We obtain two cycles of different parity: the cycle with vertices $V(R')$, $b'$, $x$, $b$ and vertices of $C$, and the cycle with vertices $V(R')$, $b'$, $a$ and vertices of $C$. 
Both cycles can contain chords from $R'$ to $C$. But the, one of the induced cycles that includes $b'$ is odd and of length at least $5$. Consequently, we obtain a loose odd wheel.

\bigskip

Finally, if there is no chord from $a$ to a vertex of $C$, the only possible chords that can occur in $C'$ are edges from a vertex $r \in V(\subpath{HC(b')}{a}{w}{x})$ or from $b$ to a vertex of $C$. If there is a chord $bc$ with $c\in V(C)$, then $b$ has either three odd neighbours on $C$ or there is still an odd cycle of length at least $5$ containing $V(\subpath{HC(b')}{a}{w}{x}),a$ and $b$. 

Suppose there is a chord from $r \in V(\subpath{HC(b')}{a}{w}{x})$ to a vertex of $C$.
If the induced cycle in $C'$ that contains $a$ and $b$ is even, there is an odd cycle with vertices of $\subpath{HC(b')}{a}{w}{x}$ and $C$. Then, as we have seen  in~\eqref{eq:triangle_low}, $G$ contains a loose odd wheel. If the induced cycle in $C'$ that contains $a$ and $b$ is odd and contains at least three vertices of $\subpath{HC(b')}{a}{w}{x}$, then it forms a loose odd wheel with center vertex $b'$.
Otherwise, there is a vertex $r_1\in V(\subpath{HC(b')}{a}{w}{x})$ with $ar_1 \in E(G)$ that is adjacent to a vertex $c_r$ of $C$ such that $\subpath{C}{c_r}{w}{b'}$ is odd. Choose $c_r$ such that $\subpath{C}{c_r}{w}{b'}$ is of maximal length. The path $\subpath{C}{c_r}{v_0}{b'}$ is also odd and forms an odd cycle together with $b,a$ and $r_1$. The vertex $r_1$ cannot be contained in a chord of this cycle by choice of $c_r$ and by~\eqref{eq:neighbours_of_b_b'}. Further, there is no chord from $a$ to a vertex of $C$. The vertex $b$ can be adjacent to vertices of $C$. But then, $b$ either has three odd neighbours or there is an odd hole that contains $b,a$ and $r_1$. If the hole is contractible, Theorem~\ref{thm:cycles_triang}  assures that $G$ contains a loose odd wheel. Otherwise, the edge $r_1c_r$ closes a contractible cycle containing a part of the Hamilton cycle of $w$ --- the path $R''$. Using the fact that the vertices $b$ and $a'$ do not lie in the interior of this cycle, we get a loose odd wheel with $R''$, vertices of $C$ and $a'$ (respectively $\{x,b\}$) similar to the cases we have seen before.  
\end{proof}

This finishes the proof of Lemma~\ref{lem:a'b'}.

\end{proof}

\subsection{Even-contraction if $b'$ is contained in the odd hole}
\label{subsec:b'}

\begin{lemma} \label{lem:b'}
Let $G$ be a nice Eulerian triangulation. Let $G'$ be obtained from $G$ by an even-contraction at $x$ and let $\{x,a,a',b,b'\}\cap V(C)$ equal $ \{b'\}$. If $G$ contains an odd hole $C$ and $G'$ is perfect, then $G$ contains a loose odd wheel.
\end{lemma}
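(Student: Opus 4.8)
The plan is to mirror the proof of Lemma~\ref{lem:a'b'}, exploiting that now only $b'$ lies on the odd hole $C$. First I would set up the contraction. Writing $K=\gamma(C)$, the cycle $K$ has vertex set $V(C)\setminus\{b'\}\cup\{y\}$ and is odd, so if $K$ were induced in $G'$ it would be an odd hole, contradicting that $G'$ is perfect; hence $K$ has chords. Since $C$ is induced, every chord of $K$ is incident to $y$, and a vertex $c\in V(C)$ is a chord-neighbour of $y$ exactly when $c\in N_G(b)$: indeed $N_G(x)\cap V(C)=\emptyset$ (the only neighbours of $x$ are $a,a',b,b'$), and $N_G(b')\cap V(C)$ consists only of the two $C$-neighbours $u,u'$ of $b'$. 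The key structural fact I would record is that $N_G(b)\cap\{u,u'\}=\emptyset$: by \eqref{eq:neighbours_of_b_b'} every common neighbour of $b$ and $b'$ lies in $\{a,a',x\}$, none of which is on $C$.

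Next I would set up the dichotomy. Let $d_1,\dots,d_r$ be the neighbours of $b$ on $C$ in cyclic order and let $B_1,\dots,B_r$ be the arcs of $C$ between consecutive $d_i$, with $b'$ lying in $B_r$ and $d_1,d_r$ the neighbours of $b$ nearest to $u,u'$. If at least three of the $B_i$ are odd, then three of the $d_i$ can be chosen so that the three arcs of $C$ between them are all odd, whence $b$ has three odd neighbours on $C$ and $C$ together with $b$ is a \low; we are done. Since $\sum_i|B_i|=|C|$ is odd, the number of odd $B_i$ is odd, so it remains to treat the case where exactly one $B_i$ is odd. Here I would use that $G'$ has no odd hole together with \eqref{eq:chord->odd_subcycle}: the induced subcycles of $K$ are precisely $y$ together with a single arc between consecutive $y$-neighbours, so every such odd arc must be a triangle, i.e.\ of length~$1$. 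Thus every odd arc between consecutive neighbours of $b$, and every odd arc between $u$ (resp.\ $u'$) and the nearest $d_i$, has length exactly~$1$.

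If the unique odd arc is $B_r$, the argument is short. Writing $B_r$ as the concatenation of $\subpath{C}{u}{d_1}{b'}$, the edges $ub',b'u'$ and $\subpath{C}{u'}{d_r}{b'}$, oddness of $B_r$ forces exactly one of the two flanking sub-arcs to be odd, hence of length~$1$; say $ud_1\in E(G)$. Then $b',u,d_1,b,x$ is an induced $5$-cycle: no chord is possible since $C$ is induced, $bb'\notin E$ by \eqref{eq:b_b'_not_adjacent}, $u\notin N_G(b)$, and the only neighbours of $x$ are $a,a',b,b'$. This odd hole passes through the degree-$4$ vertex $x$, so Observation~\ref{obs:deg4->-loose_odd_wheel} yields a \low.

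The main obstacle is the remaining case, where the unique odd arc is an interior $B_j$. Then $B_j$ has length~$1$, so $d_jd_{j+1}\in E(G)$ and $b,d_j,d_{j+1}$ is a triangle, but no short odd cycle through $x$ is available and the loose odd wheel must be produced by a longer argument, exactly parallel to Claims~\ref{claim:nikolaus_even_hamilton}--\ref{claim:nikolaus_odd_hamilton}. I would consider the two induced paths between $u$ and $u'$ along $HC(b')$ (induced by Observation~\ref{obs:induced_cycles} and of equal parity by Observation~\ref{obs:Eulerian}), split into subcases according to their parity and length, and grow an odd cycle outward from the forced local configuration. At each step any chord either gives some vertex three odd neighbours on an odd cycle (a \low, via Observation~\ref{obs:deg4->-loose_odd_wheel}) or produces an induced odd cycle of length at least~$5$ that is untouched by the contraction, contradicting that $G'$ is perfect; as in Claim~\ref{claim:nikolaus_odd_hamilton}, $3$-colourability of $G$ (Observation~\ref{obs:3colours}) would be used to control the parities of the relevant Hamilton-cycle paths. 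Carrying out this embedding-and-chord case analysis is where essentially all the work lies.
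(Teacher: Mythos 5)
Your set-up and the two easy cases are correct and essentially identical to the paper's argument: the observation that all chords of $K=\gamma(C)$ join $y$ to $N_G(b)\cap V(C)$, that $N_G(b)$ avoids the $C$-neighbours $u,u'$ of $b'$ by \eqref{eq:neighbours_of_b_b'}, your parity dichotomy on the arcs (three or more odd arcs give $b$ three odd neighbours on $C$, hence a \low), and the case of an odd arc touching $u$ or $u'$, where your induced $5$-cycle $b',u,d_1,b,x$ is exactly the paper's ``triangle with one chord'' case (the paper closes it with hub $a$, you with Observation~\ref{obs:deg4->-loose_odd_wheel}; both are fine).

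The genuine gap is the remaining case, where $b$ is adjacent to two consecutive vertices $c_1,c_2$ of $C$ (your interior arc $B_j$ of length $1$, the paper's ``triangle with two chords''). This is where almost the entire proof of the lemma lives, and you leave it at the level of a plan. Moreover, the plan's closing mechanism cannot work as stated: you propose that every chord either yields three odd neighbours or an induced odd cycle of length at least $5$ that is \emph{untouched} by the contraction, contradicting perfection of $G'$. But in this configuration the odd cycles that actually arise contain $x,b$ or $a,a',b'$ --- precisely the vertices the contraction destroys --- so perfection of $G'$ gives no direct contradiction. The paper proceeds quite differently: it chooses the pair $c_1,c_2$ with $\subpath{C}{b'}{c_1}{c_2}$ minimal and shows this path is odd \eqref{eq:odd_segment_b'}; it classifies the three topological embeddings of the edges $bc_1,bc_2$ (Figure~\ref{fig:bprime}); it argues that one may assume $a$ and $a'$ have a common neighbour besides $b,x,b'$ (otherwise swap the roles of $(a,a')$ and $(b,b')$, and \eqref{eq:a_a'_on_C} shows $C$ is unaffected), which rules out embedding III; and in embeddings I and II the argument terminates not by contradicting perfection of $G'$ but by exhibiting an odd hole through $a$ and $b$, or through $a'$ and $b'$, and then invoking Lemma~\ref{lem:a'b'}. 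None of these ingredients --- the minimality/parity fact, the embedding trichotomy, the common-neighbour reduction, and above all the reduction to Lemma~\ref{lem:a'b'} --- appears in your sketch, and simply mirroring Claims~\ref{claim:nikolaus_even_hamilton}--\ref{claim:nikolaus_odd_hamilton} will not supply them.
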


\begin{proof}
Suppose that $G'$ is perfect. Then, every odd induced subcycle of $\gamma(C)=K$ in $G'$ is a triangle. As all chords of  $\gamma(C)$ contain $b$, there are two possibilities: the triangle may contain one or two chords.

If the triangle contains one chord, then $b$ is adjacent to a vertex $v_1$ of distance~$2$ from $b'$ on $C$. The vertices $y,v_1$ and the common neighbour $v_2$ of $v_1$ and $y$ on $K$ now give a triangle. In this case, the vertices $b',x,b,v_1$ and $v_2$ form an odd cycle in $G$. Since $C$ is induced and $b$ is not adjacent to $v_2$ (see~\eqref{eq:neighbours_of_b_b'}),  this cycle is induced. It forms a \low together with  $a$.

If the triangle is of the form $y,c_1,c_2,y$ where $yc_1$ and $yc_2$ are chords of $K$, then, $bc_1$ and $bc_2$ are edges in $G$. Without loss of generality, we choose $c_1$ and $c_2$ in such a way that $\subpath{C}{b'}{c_1}{c_2}$ is of minimal length. Note that this choice implies that 
\begin{equation} \label{eq:odd_segment_b'}
\subpath{C}{b'}{c_1}{c_2} \text{ is odd.}
\end{equation}
Otherwise, $\subpath{C}{b'}{c_1}{c_2}$ forms an odd cycle together with $b$ and $x$ which has an odd subcycle of length at least $5$.

\begin{figure}[bht]
\begin{center}
\begin{tikzpicture}[scale = 1.3]
\begin{scope}
\draw[dotted] (0,0) circle (1.4cm);
\node[invvertex] (cr) at (1.4,0){};
\node[invvertex] (cl) at (-1.4,0){};
\node[novertex] (l) at (0,-1.7){\textbf{I}};

\node[hvertex] (b2) at (-0.6,0){};
\node[novertex] (lb2) at (-0.6,-0.2){$b'$};
\node[hvertex] (c1) at (0.6,0){};
\node[novertex] (lc1) at (0.68,-0.2){$c_1$};
\node[hvertex] (c2) at (1,0){};
\node[novertex] (lc2) at (1.15,-0.2){$c_2$};

\node[hvertex] (x) at (-0.6,0.4){};
\node[novertex] (lx) at (-0.52,0.48){$x$};
\node[hvertex] (a2) at (-0.9,0.4){};
\node[novertex] (la2) at (-1.07,0.4){$a'$};
\node[hvertex] (a) at (-0.3,0.4){};
\node[novertex] (la) at (-0.15,0.355){$a$};
\node[hvertex] (b) at (-0.6,0.8){};
\node[novertex] (lb) at (-0.75,0.85){$b$};

\draw[decorate,decoration={snake,amplitude=.4mm,segment length=2mm,post length=0mm}] (cr) -- (c2);
\draw[decorate,decoration={snake,amplitude=.4mm,segment length=2mm,post length=0mm}] (cl) -- (b2);
\draw[decorate,decoration={snake,amplitude=.4mm,segment length=2mm,post length=0mm}] (b2) -- (c1);

\draw[hedge] (c1) -- (c2);

\draw[hedge] (x) -- (b);
\draw[hedge] (x) -- (b2);
\draw[hedge] (x) -- (a);
\draw[hedge] (x) -- (a2);
\draw[hedge] (b2) -- (a);
\draw[hedge] (b2) -- (a2);
\draw[hedge] (a) -- (b);
\draw[hedge] (a2) -- (b);

\draw[hedge] (c1) -- (b);
\draw[hedge] (c2) -- (b);
\end{scope}

\begin{scope}[shift={(3.2,0)}]
\draw[dotted] (0,0) circle (1.4cm);
\node[invvertex] (cr) at (1.4,0){};
\node[invvertex] (cl) at (-1.4,0){};
\node[novertex] (cu) at (0,1.48){};
\node[novertex] (cd) at (0,-1.48){};
\node[novertex] (l) at (0,-1.7){\textbf{II}};

\node[hvertex] (b2) at (-0.6,0){};
\node[novertex] (lb2) at (-0.6,-0.2){$b'$};
\node[hvertex] (c1) at (0.6,0){};
\node[novertex] (lc1) at (0.68,-0.2){$c_1$};
\node[hvertex] (c2) at (1,0){};
\node[novertex] (lc2) at (1.15,-0.2){$c_2$};

\node[hvertex] (x) at (-0.6,0.4){};
\node[novertex] (lx) at (-0.52,0.48){$x$};
\node[hvertex] (a2) at (-0.9,0.4){};
\node[novertex] (la2) at (-1.07,0.4){$a'$};
\node[hvertex] (a) at (-0.3,0.4){};
\node[novertex] (la) at (-0.15,0.355){$a$};
\node[hvertex] (b) at (-0.6,0.8){};
\node[novertex] (lb) at (-0.75,0.85){$b$};

\draw[decorate,decoration={snake,amplitude=.4mm,segment length=2mm,post length=0mm}] (cr) -- (c2);
\draw[decorate,decoration={snake,amplitude=.4mm,segment length=2mm,post length=0mm}] (cl) -- (b2);
\draw[decorate,decoration={snake,amplitude=.4mm,segment length=2mm,post length=0mm}] (b2) -- (c1);
\draw[hedge] (c1) -- (c2);

\draw[hedge] (x) -- (b);
\draw[hedge] (x) -- (b2);
\draw[hedge] (x) -- (a);
\draw[hedge] (x) -- (a2);
\draw[hedge] (b2) -- (a);
\draw[hedge] (b2) -- (a2);
\draw[hedge] (a) -- (b);
\draw[hedge] (a2) -- (b);

\draw[hedge] (c2) -- (b);
\draw[hedge] (cu) -- (b);
\draw[hedge] (c1) -- (cd);

\end{scope}

\begin{scope} [shift={(6.4,0)}]
\draw[dotted] (0,0) circle (1.4cm);
\node[invvertex] (cr) at (1.4,0){};
\node[invvertex] (cl) at (-1.4,0){};
\node[novertex] (l) at (0,-1.7){\textbf{III}};
\node[novertex] (cu) at (0,1.48){};
\node[novertex] (cd) at (0,-1.48){};

\node[hvertex] (b2) at (-0.6,0){};
\node[novertex] (lb2) at (-0.6,-0.2){$b'$};
\node[hvertex] (c1) at (0.6,0){};
\node[novertex] (lc1) at (0.68,-0.2){$c_1$};
\node[hvertex] (c2) at (1,0){};
\node[novertex] (lc2) at (1.15,-0.2){$c_2$};

\node[hvertex] (x) at (-0.6,0.4){};
\node[novertex] (lx) at (-0.52,0.48){$x$};
\node[hvertex] (a2) at (-0.9,0.4){};
\node[novertex] (la2) at (-1.07,0.4){$a'$};
\node[hvertex] (a) at (-0.3,0.4){};
\node[novertex] (la) at (-0.15,0.355){$a$};
\node[hvertex] (b) at (-0.6,0.8){};
\node[novertex] (lb) at (-0.75,0.85){$b$};

\draw[decorate,decoration={snake,amplitude=.4mm,segment length=2mm,post length=0mm}] (cr) -- (c2);
\draw[decorate,decoration={snake,amplitude=.4mm,segment length=2mm,post length=0mm}] (cl) -- (b2);
\draw[decorate,decoration={snake,amplitude=.4mm,segment length=2mm,post length=0mm}] (b2) -- (c1);
\draw[hedge] (c1) -- (c2);

\draw[hedge] (x) -- (b);
\draw[hedge] (x) -- (b2);
\draw[hedge] (x) -- (a);
\draw[hedge] (x) -- (a2);
\draw[hedge] (b2) -- (a);
\draw[hedge] (b2) -- (a2);
\draw[hedge] (a) -- (b);
\draw[hedge] (a2) -- (b);

\draw[hedge] (c1) -- (b);
\draw[hedge] (cu) -- (b);
\draw[hedge] (cd) -- (c2);

\end{scope}

%
%
%
%
%
%
%
%
%

\end{tikzpicture}
\end{center}

\caption{The three possible embeddings of the edges $bc_1$ and $bc_2$ in $G$}
\label{fig:bprime}
\end{figure}
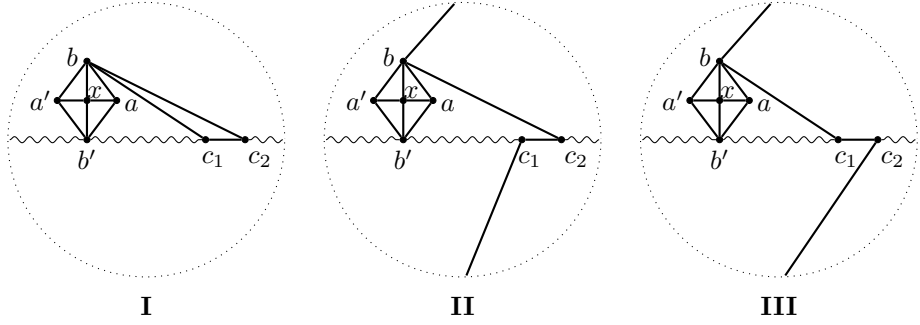

There are three ways of embedding the edges $bc_1$ and $bc_2$ (up to switching the vertices $c_1$ and $c_2$ and the vertices $a$ and $a'$ and up to topological isomorphy). They are shown in Figure~\ref{fig:bprime} and treat the (non-)contractabilty of the cycles $ \subpath{C}{b'}{c_1}{c_2} \cup \lbrace  b'a, ab, bc_1 \rbrace$ and  $ \subpath{C}{b'}{c_2}{c_1} \cup \lbrace  b'a, ab, bc_2 \rbrace$. Note that $C$ is non-contractible by~Theorem~\ref{thm:cycles_triang}.
If $a$ and $a'$ have no common neighbour besides $\lbrace b,b',x \rbrace$, then we can switch the roles of $b,b'$ with the roles of $a,a'$. As we have seen in~\eqref{eq:a_a'_on_C}, this means $C$ is not affected by the even-contraction. Thus, we can assume that $a$ and $a'$ have a common neighbour besides $b,x,b'$.  

\bigskip 
Suppose that the graph is embedded as in III (see Figure~\ref{fig:bprime}). The cycle $\subpath{C}{b'}{c_1}{c_2} \cup \lbrace  b'x, xb, bc_1 \rbrace$ is contractible and contains $a$ in its interior. The cycle $\subpath{C}{b'}{c_2}{c_1} \cup \lbrace  b'x, xb, bc_2 \rbrace$ is contractible and contains $a'$ in its interior. For this reason, a common neighbour of $a$ and $a'$ must be contained in the intersection of both cycles, ie in $\lbrace  b,b' \rbrace$. Therefore, it is not possible that  $a$ and $a'$ have a further common neighbour if the graph is embedded as in III.

\bigskip 
Suppose that the graph is embedded as in I (see Figure~\ref{fig:bprime}). 
Similar to the arguments before, one can see that all common neighbours of $a$ and $a'$ besides besides $b,x,b'$ lie on the path $\subpath{C}{b'}{c_1}{c_2}$. 

First assume that there is a vertex $v'$ in $\subpath{C}{b'}{c_1}{c_2}$ that is adjacent to $a$ and that there is a vertex $w$ on  $\subpath{C}{v'}{b'}{c_1}$ that is not adjacent to $a$.
There are two paths along $HC(w)$ that connect $v'$ and $z$. Let $P'$ be the path that lies in the contractible cycle formed by $\subpath{C}{b'}{v'}{c_1}$ and $a$. 
Consider the cycle $C_1$ formed by $P'$ together with $\subpath{C}{b'}{z}{w}$, the vertices $x,b$ and the path $\subpath{C}{c_1}{v'}{w}$. 
Further, consider the cycle $C_2$ formed by $P'$ together with $\subpath{C}{v'}{z}{w}$. 
Notice that either $C_1$ or $C_2$ is odd. If the odd cycle is induced, we get a loose odd wheel with center vertex $x$ respectively $w$. 
Assume that the odd cycle contains chords. If there is a chord from $b$ to a vertex of $\subpath{C}{b'}{c_1}{c_2}$ (in the case if $C_1$ is odd),  the vertex $b$ has three odd neighbours on $C$ or there still remains an induced odd cycle containing $b,x$. If there is a chord from a vertex $p$ of $P'$ to a vertex $q$ of $C$, consider the induced cycle $C_i'$ in $C_i$ which contains $v'$ for $i=1,2$. Either $C_1'$ or $C_2'$ is odd. If $C_1'$ is an odd induced cycle, we get a loose odd wheel by Observation~\ref{obs:deg4->-loose_odd_wheel}.
Otherwise, $C_2'$ is an odd cycle. If $w$ has at least three neighbours on $C_2'$ we get a loose odd wheel. If $w$ has only two neighbours on $C_2'$, it directly follows that $p$ has three odd neighbours on $C$, namely $v',w$ and $q$.

Now assume, that no vertex $v'$ as above exists. Then, either $a$ has three odd neighbours (and we get a \low) or the only vertex in the interior of $\subpath{C}{b'}{c_1}{c_2}$ that is adjacent to $a$ equals the neighbour $v$ of $b'$. 
But then $b, a$ and $\subpath{C}{v}{c_1}{c_2}$ form an odd cycle $C'$. Note that all chords of $C'$ have $b$ as an endvertex. Further, either $b$ has three odd neighbours on $C$ (and $G$ contains a \low) or all chords  $bw$  satisfy that $\subpath{C}{c_1}{w}{c_2}$ is of even length. In the second case, every odd induced subcycle of $C'$ contains $a$ and $b$. If the largest induced odd subcycle of $C'$ is of length at least $5$, then $G$ has a further odd hole which contains $a$ and $b$. Then, $G$ contains a \low by Lemma~\ref{lem:a'b'}. Otherwise, $b$ is adjacent to $v$  which  contradicts~\eqref{eq:neighbours_of_b_b'}.

\bigskip

Next, suppose that the graph is embedded as in II (see Figure~\ref{fig:bprime}).
In this case,  one can see that $c_1$ or $c_2$ are the only possible common neighbours of $a$ and $a'$ besides $b,x,b'$. 
If $c_1$ is adjacent to $a$ and $a'$, we obtain an odd cycle formed by $a'$ and $ \subpath{C}{b'}{c_1}{c_2}$. This means that $G$ has an odd hole which contains $a'$ and $b'$. 
If $c_2$ is the common neighbour of $a$ and $a'$, we obtain the cycle consisting of $\subpath{C}{b'}{c_2}{c_1}$ and $a$ that contains $a$ and $b'$. Both cycles are odd (see~\eqref{eq:odd_segment_b'}), of length at least $5$ (see~\eqref{eq:neighbours_of_b_b'}), and induced (as $C$ is induced and the embedding does not allow any chords).
With~Lemma~\ref{lem:a'b'}, the proof is finished.
\end{proof}

\bibliographystyle{abbrv}

\bibliography{bib-pp_triang} 

\begin{thebibliography}{10}

\bibitem{Bar82}
D.~Barnette.
\newblock Generating the triangulations of the projective plane.
\newblock {\em Journal of Combinatorial Theory, Series B}, 33(3):222 -- 230,
  1982.

\bibitem{Benchetrit2015}
Y.~Benchetrit.
\newblock {\em Geometric properties of the chromatic number: polyhedra,
  structure and algorithms}.
\newblock PhD thesis, Universit\'e de Grenoble, 2015.

\bibitem{Benchetrit2016}
Y.~Benchetrit.
\newblock On 4-critical t-perfect graphs.
\newblock {\em ArXiv e-prints}, Apr. 2016.

\bibitem{Ben_Bru15}
Y.~{Benchetrit} and H.~{Bruhn}.
\newblock {$h$-perfect plane triangulations}.
\newblock {\em ArXiv e-prints}, Nov. 2015.

\bibitem{BouUhr79}
M.~Boulala and J.~Uhry.
\newblock Polytope des ind\'ependants d'un graphe s\'erie-parall\`ele.
\newblock {\em Discrete Mathematics}, 27:225--243, 1979.

\bibitem{Bru_Fu15}
H.~{Bruhn} and E.~{Fuchs}.
\newblock {$t$-perfection in near-bipartite and in $P_5$-free graphs}.
\newblock {\em ArXiv e-prints}, July 2015.

\bibitem{Bru_Stei12}
H.~Bruhn and M.~Stein.
\newblock On claw-free $t$-perfect graphs.
\newblock {\em Mathematical Programming}, pages 461--480, 2012.

\bibitem{SPGT}
M.~Chudnovsky, N.~Robertson, P.~Seymour, and R.~Thomas.
\newblock {The Strong Perfect Graph Theorem}.
\newblock {\em Annals of Mathematics}, 164:51--229, 2006.

\bibitem{Chvatal75}
V.~Chv\'atal.
\newblock On certain polytopes associated with graphs.
\newblock {\em Journal of Combinatorial Theory, Series B}, 18:138--154, 1975.

\bibitem{Diestel}
R.~Diestel.
\newblock {\em Graph theory}.
\newblock Springer-Verlag, 4 edition, 2010.

\bibitem{tpquad}
E.~Fuchs and L.~Gellert.
\newblock Quadrangulations of the projective plane are $t$-perfect if and only
  if they are bipartite.
\newblock {\em ArXiv e-prints}, June 2016.

\bibitem{Fulkerson72}
D.~Fulkerson.
\newblock Anti-blocking polyhedra.
\newblock {\em Journal of Combinatorial Theory, Series B}, 12:50--71, 1972.

\bibitem{Gerards89}
A.~Gerards.
\newblock A min-max relation for stable sets in graphs with no odd-{$K_4$}.
\newblock {\em Journal of Combinatorial Theory, Series B}, 47:330--348, 1989.

\bibitem{GS98}
A.~Gerards and F.~Shepherd.
\newblock The graphs with all subgraphs $t$-perfect.
\newblock {\em SIAM J.\ Discrete Math.}, 11:524--545, 1998.

\bibitem{Ge_Schri86}
A.~M.~H. Gerards and A.~Schrijver.
\newblock Matrices with the {Edmonds}---{Johnson} property.
\newblock {\em Combinatorica}, 6(4):365--379, 1986.

\bibitem{HuRiSey02}
J.~Hutchinson, R.~Richter, and P.~Seymour.
\newblock Colouring {Eulerian} triangulations.
\newblock {\em Journal of Combinatorial Theory, Series B}, 84(2):225 -- 239,
  2002.

\bibitem{Lovasz72}
L.~Lov{\'a}sz.
\newblock Normal hypergraphs and the perfect graph conjecture.
\newblock {\em Discrete Mathematics}, 2(3):253 -- 267, 1972.

\bibitem{Mohar02}
B.~Mohar.
\newblock Coloring {Eulerian} triangulations of the projective plane.
\newblock {\em Discrete Mathematics}, 244(1–3):339 -- 343, 2002.

\bibitem{Ri78}
G.~Ringel.
\newblock Non-existence of graph embeddings.
\newblock In Y.~Alavi and D.~R. Lick, editors, {\em {Theory and Applications of
  Graphs: Proceedings, Michigan May 11--15, 1976}}, pages 465--476, Berlin
  Heidelberg, 1978. Springer Berlin Heidelberg.

\bibitem{Schri02}
A.~Schrijver.
\newblock Strong t-perfection of bad-$k_4$-free graphs.
\newblock {\em SIAM J. Discret. Math.}, 15(3):403--415, 2002.

\bibitem{LexBible}
A.~Schrijver.
\newblock {\em Combinatorial Optimization. {P}olyhedra and efficiency}.
\newblock Springer-Verlag, 2003.

\bibitem{SuWa07}
Y.~Suzuki and T.~Watanabe.
\newblock Generating even triangulations of the projective plane.
\newblock {\em Journal of Graph Theory}, 56(4):333--349, 2007.

\end{thebibliography}

\vfill

\noindent
Elke Fuchs
{\tt <elke.fuchs@uni-ulm.de>}\\
Laura Gellert
{\tt <laura.gellert@uni-ulm.de>}\\
Institut f\"ur Optimierung und Operations Research\\
Universit\"at Ulm, Ulm\\
Germany\\

\end{document}